\documentclass{amsart}

\usepackage{amssymb, amsthm, amsmath,  amssymb, amsbsy,   amsfonts,
  latexsym,  amsopn,   amstext, amsxtra,  euscript,   amscd} 
\usepackage[shortlabels]{enumitem}
\usepackage{graphicx}
\usepackage{cite}
\usepackage{hyperref}
\usepackage{longtable}
\usepackage[usenames,dvipsnames]{color}

\hypersetup{
  colorlinks   = true, 
  urlcolor     = blue, 
  linkcolor    = blue, 
  citecolor   = red 
}

\usepackage{array}
\usepackage{multirow}

\usepackage[capitalise]{cleveref}
\usepackage{aliascnt}

\newtheorem{thm}{Theorem}

\newtheorem{lem}[thm]{Lemma}
\newtheorem{cor}[thm]{Corollary}
\newtheorem{prop}[thm]{Proposition}

\theoremstyle{definition}

\newtheorem{question}[thm]{Question}
\newtheorem{rem}[thm]{Remark}

\crefname{thm}{Thm.}{}
\crefname{prop}{Prop.}{}
\crefname{lem}{Lem.}{}
\crefname{cor}{Cor.}{}
\crefname{prob}{Problem}{}
\crefname{figure}{Fig.}{}

\numberwithin{thm}{section}
\usepackage[citation-order, msc-links, backrefs]{amsrefs}

\DeclareMathOperator\Hom{Hom }
\DeclareMathOperator\End{End }

\DeclareMathOperator\Spec{Spec }
\DeclareMathOperator\Sym{Sym}
\DeclareMathOperator\ord{ord }

\def\iso{\cong}

\def\lar{\longrightarrow}

\def\X{\mathcal C}

\DeclareMathOperator\Aut{Aut}

\DeclareMathOperator\Jac{Jac }

\def\<{\langle} \def\>{\rangle}
\def\mc#1{\mathcal{#1}}
\def\mf#1{\mathfrak{#1}}
\def\ol#1{\overline{#1}}

\def\P{\mathbb P}
\def\C{\mathbb C}
\def\Q{\mathbb Q}

\def\Z{\mathbb Z}

\def\nats{\mathbb N}
\def\ints{\mathbb Z}
\def\rats{\mathbb Q}

\def\complex{\mathbb C}
\def\proj{\mathbb P}

\def\X{\mathcal C}

\def\H{\mathcal H}

\def\p{\mathfrak p}

\def\M{\mathcal M}

\def\Y{\mathcal Y}

\def\AA{\mathcal A}
\def\B{\mathcal B}
\def\Hom{\mbox{Hom} }

\title{Superelliptic curves with many automorphisms and  CM Jacobians}
\author{Andrew Obus}
\address{1 Bernard Baruch Way \\
  New York, NY 10010}
\thanks{The first author was supported by the National
  Science Foundation under DMS Grant No.\ 1900396}

\author{Tanush Shaska}
\address{146 Library Drive\\
368 Mathematics Science Center\\
Rochester MI 48309-4479
}

\begin{document}
\begin{abstract}
Let $\X$ be a smooth, projective, genus $g\geq 2$ curve, defined over
$\C$.  Then $\X$ has \emph{many   automorphisms} if its corresponding
moduli point $\p \in \M_g$ has a neighborhood $U$ in the complex
topology,  such that all curves corresponding to points in $U
\setminus \{\p \}$ have  strictly fewer automorphisms than $\X$.   We
compute completely the list of superelliptic curves having many automorphisms.  For  each of these curves, we determine whether its Jacobian has complex multiplication.  
As a consequence, we prove the converse of Streit's complex multiplication criterion 
for these curves.
\end{abstract}

\keywords{complex multiplication; superelliptic curves}

\subjclass[2010]{14H37 (primary), 14H45, 14K22 (secondary)} 

\maketitle

\section{Introduction}
An abelian variety $\AA$ has complex multiplication (or is of CM-type) over a field $k$ if $\End_k^0 (\AA)$ contains a commutative, semisimple $\Q$-algebra of dimension $2\dim \AA$.   They were first studied by M. Deuring \cite{deuring, deuring-2} for elliptic curves and generalized to Abelian varieties by Shimura and Tanyama in \cite{shimura-tanyama}. By abuse of terminology, a curve is said to have complex  multiplication (or to be of CM-type)  when its Jacobian is of CM-type.   Since the CM property is a property of the Jacobian, it is an invariant of the curve. A natural question is whether there is anything special about the points in the moduli space $\M_g$ of genus $g\geq 2$ curves, for which the Jacobian is of CM-type; see \cite{Oort}.   F. Oort asked if curves with many automorphisms (cf. \cref{many-aut}) are all of CM-type.  The answer to this question is negative, as explained in  \cite{oort-2},  where a full history of the problem and recent developments are given. 

Let $\X$ be a smooth, projective, genus $g\geq 2$ curve defined over $k$,   $\p \in \M_g$  its corresponding moduli point, and  $G:=\Aut (\X)$  the automorphism group of $\X$ over the algebraic closure of $k$.  For our purposes we will assume $k=\C$.  We say that $\X$ has \emph{many   automorphisms} if its corresponding point $\p \in \M_g$ has a neighborhood $U$ (in the complex topology) such that all curves corresponding to points in $U \setminus \{\p \}$ have automorphism group strictly smaller than $G$.   They shouldn't be confused with curves with \emph{large automorphism group} which are curves with automorphism group $|G| > 4 (g-1)$.  Not all curves with large automorphism group are curves with many automorphisms.

As mentioned above, Oort asked if such curves are of CM-type, and this
is not true in general.  However it remains an interesting question to determine which curves with many automorphisms are of CM-type. 
 In general, for a given $g\geq 2$ we can determine the full list of automorphism groups that occur; see \cite{kyoto} and \cite{aut} for a complete survey on automorphism groups of algebraic curves.  It is difficult from the group   $\Aut (\X)$  alone to determine if $\X$ is of CM-type without knowing anything about the equation of the curve.  However, there is only one class of curves for which we can determine the equation of the curves explicitly starting from the automorphism group, namely the  superelliptic curves.  Hence, it is a natural choice to try to determine which superelliptic curves with many automorphisms are of CM-type. 

In \cite{muller-pink}, the authors solved this problem for
hyperelliptic curves.   Their main tool is a formula of Streit from
\cite{streit-01}, which gives   conditions on the characters of the
group of automorphisms of the curve.  More precisely,  let $\chi_\X$
be the character of $\sigma$  on $H^0(\X, \omega_{\X})$, and let
$\Sym^2\chi_\X$ be the character of $\sigma$ on $\Sym^2H^0(\X,
\omega_{\X})$.  By $\chi_{triv}$ we denote the character of the
trivial representation on $\C$.  Streit showed that if $\< \Sym^2
\chi_\X, \chi_{triv} \> =0$ then $\Jac \X$ has complex multiplication;
see \cite{streit-01}.  We say that $\X$ \emph{satisfies Streit's
  criterion} if this inner product is $0$.

For $\X$  hyperelliptic, the authors in \cite{muller-pink}  determine
a formula which computes $\Sym^2 \chi_\X$ and through this formula are
able to determine precisely if a
hyperelliptic curve with many automorphisms is or is not of CM-type.
They prove their formula using the fact that it is easy to write a monomial basis of
holomorphic differentials for hyperelliptic function fields. 
As a consequence, the converse of Streit's criterion holds for hyperelliptic curves with many automorphisms and reduced automorphism group isomorphic to  $A_4$, $S_4$, or $A_5$.    In other words, no such curve that fails Streit's criterion can have complex multiplication.

Using a similar approach we are able to prove a similar formula for
superelliptic curves (cf. \cref{Pstreit}).    Let $\X:  y^n =
f(x)$ be a smooth superelliptic curve  defined over $\complex$ (in
particular, $f(x)$ is assumed to be a \emph{separable polynomial}, see
 \cref{Smany}), and let $\tau$ be the order $n$ automorphism given by
$y \mapsto \zeta_ny$, where $\zeta_n$ is a primitive $n$th rooth of
unity.   Let $G$ be the normalizer of $\tau$ in $\Aut(\X)$, and let
$\ol{G} = G/\langle \tau \rangle$, which naturally lies in
$\Aut(\proj^1) \cong PGL_2(\complex)$.  For each $\ol{\sigma} \in \ol{G}$, let $m$ be
its order, and let $\zeta_{\ol{\sigma}}$ be either ratio of the
eigenvalues when  $\ol{\sigma}$ is thought of as an element of
$PGL_2(\complex)$.  Observe that $\zeta_{\ol{\sigma}}$ is a primitive
$m$th root of unity.
Let $\zeta_{n,
  \ol{\sigma}}$ be a primitive $mn$th root of unity such that
$\zeta_{n, \ol{\sigma}}^n = \zeta_{\ol{\sigma}}$.  Define
\[
k_{\ol{\sigma}} =
\left\{
\begin{split}
& 1     \text{  if   }    \ol{\sigma}  \; \text{ fixes a branch point
  of   }  \, \X \to  \proj^1 \\
 &  0 \quad \text{ otherwise.}  \\
\end{split} 
\right.
\]
Let
$A \subseteq \ints_{\geq 0} \times \ints_{\geq 0}$ be the set of ordered pairs defined below in (\cref{Eholforms}).  If
\begin{small}
\[
\sum_{\ol{\sigma} \in \ol{\Aut} (\X)} \sum_{i=0}^{n-1}    \left(  \sum_{(a,b) \in A} \zeta_{\ol{\sigma}}^{2(a+1)} \zeta_n^{2(b+1)i}  \zeta_{n, \ol{\sigma}}^{2(b-n+1)k_{\ol{\sigma}}} +  \left(\sum_{(a,b) \in A}   \zeta_{\ol{\sigma}}^{a+1}  \zeta_n^{(b+1)i}     \zeta_{n, \ol{\sigma}}^{(b-n+1)k_{\ol{\sigma}}}  \right)^2 \right)
\]
\end{small}
vanishes, then $\X$ satisfies Streit's criterion, and thus has CM.   However, not all
superelliptic curves with many automorphisms satisfy Streit's criterion; see
\cref{Tcurvelist2}.  We in fact prove the converse of Streit's
criterion for superelliptic curves with many automorphisms; any such
curve not satisfying Streit's criterion does not have CM \cref{Cmain}.  To do this we use stable reduction (cf. \cref{Sstable}) and the theory of semistable models as in
\cite{BW}, as well as the so-called \emph{criterion of M\"{u}ller-Pink}; see \cref{Sfrobenius}. Our
computations were done using Sage and GAP and are made available along with the arxiv posting of this paper.  

It is still an open question whether these results can be generalized  to larger families of curves, for example for  generalized   superelliptic curves as  listed in \cite{HQS}.  

In general, it is believed that curves with extra automorphisms give
Jacobian varieties with a large number of endomorphisms (here
``large'' is used informally).  Hence, perhaps an interesting family of curves to investigate would be 
all curves with large automorphism groups and many automorphisms;
these were determined in \cite{kyoto}.
It would also be interesting to obtain a theoretical explanation for \cref{Cmain}.

\section{Preliminaries} 
Throughout, we work over the field $\complex$.  An Abelian variety is an absolutely irreducible projective variety which is  a group scheme. A morphism of  Abelian varieties  $\AA$ to   $\B$ is a \emph{homomorphism} if and only if it maps the identity element of $\AA$ to the identity element of $\B$.

Let $\AA$, $\B$ be abelian varieties.  We denote the $\Z$-module of homomorphisms  $\AA \mapsto  \B$  by $\Hom( \AA, \B)$  and the ring of endomorphisms $\AA \mapsto \AA$ by $\End \AA$.    It is  more convenient  to   work with the $\Q$-vector spaces $\Hom^0 (\AA, \B):= \Hom(\AA, \B) \otimes_\Z \Q$, and $ \End^0 \AA:= \End \AA\otimes_\Z \Q$.   Determining $\End \AA$ or $\End^0 \AA$ is an interesting problem on its own; see  \cite{Oort}.

The ring of endomorphisms of a generic Abelian variety $\AA$ over $\complex$
is ``as small
as  possible", that is, $\End(\AA)=\Z$ in
general. 
In general, $\End^0(\AA)$
is a  $\Q$-algebra of dimension at most $4\dim (\AA)^2$. Indeed,
$\End^0 (\AA)$ is a semi-simple algebra, and by duality  one can apply
a complete classification due to Albert of \emph{possible} algebra
structures on $\End^0(\AA)$, see \cite[pg. 202]{Mum}.  We say that an abelian variety $\AA$ has \emph{complex multiplication} if $\End^0 (\AA)$ contains a commutative, semisimple $\Q$-algebra of dimension $2\dim \AA$. 

For the following elementary result, see \cite{shimura-tanyama}. 

\begin{lem}\label{LCMsub}
If $\AA$ is an Abelian variety with CM, then every abelian subvariety of $\AA$ also has CM.
\end{lem}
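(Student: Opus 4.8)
The plan is to recall the standard definition of CM and translate the condition into one that is manifestly inherited by subvarieties. First I would use the hypothesis that $\AA$ has CM to fix a commutative semisimple $\Q$-algebra $E \subseteq \End^0(\AA)$ with $\dim_\Q E = 2\dim \AA$. Since $E$ is a product of number fields whose total degree over $\Q$ equals $2\dim\AA$, the key numerical input is that $E$ contains a field (or a product of fields) of the maximal possible degree compatible with acting faithfully on the tangent space of $\AA$. I would then let $\B \subseteq \AA$ be an abelian subvariety and aim to produce inside $\End^0(\B)$ a commutative semisimple $\Q$-algebra of dimension $2\dim\B$.

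The central step is to exploit Poincar\'e reducibility: up to isogeny $\AA \sim \B \times \mathcal{C}$ for some complementary abelian subvariety $\mathcal{C}$, and this isogeny identifies $\End^0(\AA)$ with a subalgebra of $\End^0(\B) \times \End^0(\mathcal{C})$ containing the projections. I would consider the image of $E$ under the projection $\End^0(\AA) \to \End^0(\B)$ (equivalently, restrict the $E$-action to the isogeny factor $\B$). This image $E_\B$ is a quotient of $E$, hence again commutative and semisimple, and it embeds in $\End^0(\B)$. The work is then to check the dimension count: I must verify that $E$ acts on $\B$ in such a way that $E_\B$ has $\Q$-dimension at least $2\dim\B$. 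Here I would use the fact that $E$, having dimension $2\dim\AA = 2(\dim\B + \dim\mathcal{C})$, splits its action across the two factors, and that on each isogeny factor a commutative semisimple algebra acting with a faithful summand cannot have $\Q$-dimension exceeding twice that factor's dimension; combined with the total being exactly maximal, the dimension on $\B$ must be exactly $2\dim\B$.

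The main obstacle I anticipate is handling the bookkeeping when $E$ is a nontrivial product of fields rather than a single CM field, so that its components can be distributed unevenly between $\B$ and $\mathcal{C}$: one must rule out that $E$ "wastes" dimension by acting non-faithfully or by collapsing on the $\B$-factor. The clean way around this is to pass to the decomposition of $\AA$ up to isogeny into simple factors, observe that $E$ being of dimension $2\dim\AA$ forces $E$ to be a maximal commutative semisimple subalgebra, and then note that such a maximal algebra necessarily contains the CM-algebra of each simple isogeny factor; since $\B$ is isogenous to a product of a subcollection of these simple factors, assembling the corresponding CM-subfields yields a commutative semisimple algebra of dimension exactly $2\dim\B$ inside $\End^0(\B)$. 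An appeal to \cite{shimura-tanyama} or \cite{Mum} supplies the structural facts about maximal commutative subalgebras that make this decomposition precise.
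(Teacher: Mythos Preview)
Your argument is correct and is the standard one. The paper, however, does not give a proof at all: it simply states the lemma and refers the reader to \cite{shimura-tanyama}. Your proposal is therefore a genuine expansion of what the paper leaves as a bare citation. The Poincar\'e reducibility step and the reduction to simple isogeny factors are exactly the right moves, and your identification of the obstacle (when $E$ is a nontrivial product of fields) together with its resolution via the maximality of $E$ as a commutative semisimple subalgebra is accurate. Since you ultimately invoke \cite{shimura-tanyama} or \cite{Mum} for the structural input on maximal commutative subalgebras of $\End^0(\AA)$, your proof and the paper's treatment rest on the same reference; you have simply unpacked the argument that the paper omits.
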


 \subsection{Curves and their Jacobians}  
Throughout, $\X$ is a smooth, projective curve defined over $\complex$. We will denote its group of automorphisms by $\Aut(\X)$.   It is a well known fact that 
$|\Aut (\X) | \leq 84 (g-1)$.    One gets a stratification of $\M_g$  by strata of curves with the same automorphism group, and the generic  curve of genus $g  >2$ has trivial automorphism group.

We state the following Corollary of \cref{LCMsub}, which is used  repeatedly.

\begin{cor}\label{CCMquotient}
If $\psi: \X \to \Y$ is a finite morphism of curves and $\Jac \X$ has   CM, then so does $\Jac \Y$.
\end{cor}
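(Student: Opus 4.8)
The plan is to exploit the functoriality of the Jacobian construction to realize $\Jac \Y$, up to isogeny, as an abelian subvariety of $\Jac \X$, and then to invoke \cref{LCMsub} together with the fact that the CM property is an isogeny invariant. Throughout, $\X$ and $\Y$ are smooth projective curves over $\C$, and $\psi \colon \X \to \Y$ is finite, hence a nonconstant morphism of degree $d = \deg \psi \geq 1$.

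First I would record the two natural homomorphisms of abelian varieties attached to $\psi$. Pullback of degree-zero line bundles gives $\psi^* \colon \Jac \Y \to \Jac \X$, while pushforward of divisor classes (the norm map) gives $\psi_* \colon \Jac \X \to \Jac \Y$. The essential input is the identity $\psi_* \circ \psi^* = [d]$ on $\Jac \Y$, that is, multiplication by the degree. Since $d \geq 1$, the map $[d]$ is an isogeny of $\Jac \Y$. From the identity it follows that $\ker \psi^*$ is finite: if $\psi^*(x) = 0$ then $d x = \psi_*(\psi^* x) = 0$, so $\ker \psi^* \subseteq \Jac \Y[d]$. Hence the image $B := \psi^*(\Jac \Y)$ is an abelian subvariety of $\Jac \X$, and $\psi^*$ restricts to an isogeny $\Jac \Y \to B$.

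By hypothesis $\Jac \X$ has CM, so by \cref{LCMsub} its abelian subvariety $B$ also has CM. It then remains to transport CM across the isogeny $\Jac \Y \to B$. Since $\End^0$ depends only on the isogeny class of an abelian variety, and $\dim$ is an isogeny invariant, the defining condition for CM — that $\End^0$ contain a commutative semisimple $\Q$-algebra of dimension twice the dimension of the variety — is preserved between isogenous abelian varieties. Thus $B$ having CM forces $\Jac \Y$ to have CM, which is the desired conclusion.

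The two substantive points are the verification that $\psi_* \circ \psi^* = [d]$, a standard computation with divisor classes, and the isogeny-invariance of the CM property; both are classical. Consequently the real content of the corollary lies in \cref{LCMsub}, which is already granted, and the only mild subtlety to flag is the passage from the \emph{subvariety} $B$ to the \emph{isogenous} variety $\Jac \Y$, for which the invariance of $\End^0$ under isogeny is exactly what is needed.
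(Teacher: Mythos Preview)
Your proof is correct and follows the same essential approach as the paper: use $\psi^*$ to identify $\Jac \Y$ with an abelian subvariety of $\Jac \X$ (up to isogeny) and then apply \cref{LCMsub}. In fact you are slightly more careful than the paper, which simply asserts that $\psi^*$ is an embedding; you correctly note that $\psi^*$ may have finite kernel (e.g., when $\psi$ is an unramified cyclic cover) and bridge this gap via the isogeny invariance of the CM property.
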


\begin{proof}
Since $\psi^*$ gives an embedding of $\Jac \Y$ in $\Jac \X$, the   corollary follows from \cref{LCMsub}.
\end{proof}

\subsection{Hurwitz spaces}
We consider finite covers $\eta: \X \to \P^1$ of degree $n$. Then $\eta^*$ identifies  $\complex(\P^1)=:\complex(x)$ with a subfield of $\complex (\X)$.  First, we introduce the equivalence: $\eta \sim \eta'$ if there are isomorphisms $\alpha:\X\rightarrow \X'$ and $\beta\in \Aut(\P^1)$ with
\[\beta\circ \eta=\eta'\circ \alpha.\]
The \emph{monodromy group} of $\eta$ is the Galois group of the Galois closure $L$ of $\complex(\X)/\complex(x)$. We embed $G$ into $S_n$, the symmetric group with $n$ letters,  and fix the ramification type of the covers $\eta$. We assume that exactly $r\geq 3$ points in $\P^1(k)$ are  ramified (i.e. their preimages contain fewer than $n$ points).  Note that the ramification groups are cyclic. 

By the classical theory of covers of Riemann surfaces, which can be  transferred to the algebraic setting by the results of Grothendieck, it follows that there is a tuple $(\sigma_1, \dots , \sigma_r)$ in $S_n$ such that $\sigma_1 \cdot \cdot \cdot \sigma_r =1$, $\ord(\sigma_i)=e_i$ is the ramification order of the $i$-th ramification point $P_i$ in $L$  and $G:=\< \sigma_1, \dots , \sigma_r\> $ is a transitive group in $S_n$. 
We call such a tuple the \emph{signature} $\sigma$ of the covering $\eta$ and remark that such tuples are determined up to conjugation in $S_n$, and that the genus of $\X$ is determined by the signature because of the Hurwitz genus formula.

Let $\H_\sigma$ be the set of pairs $([\eta], (p_1, \dots , p_r))$, where $[\eta]$ is an equivalence class of covers of type $\sigma$, and $p_1, \dots , p_r$ is an ordering of the branch points of $\phi$ modulo automorphisms of $\P^1$.
The set $\H_\sigma$ carries the structure of a scheme; in fact it is a  quasi-projective variety called the \emph{Hurwitz space}.   We have the forgetful morphism 
\[\Phi_\sigma: \H_\sigma \to \M_g\]
 mapping $([\eta], (p_1, \dots , p_r))$ to the isomorphism class $[\X]$ in the moduli space $\M_g$.   Each component of $\H_\sigma$ has the same image in $\M_g$.

Define the \emph{moduli dimension of $\sigma$} (denoted by $\dim (\sigma)$) as the dimension of $\Phi_\sigma(\H_\sigma)$; i.e., the dimension of the locus of genus $g$ curves admitting a cover to $\P^1$ of type $\sigma$. We say $\sigma$ has   \emph{full moduli dimension} if   $\dim(\sigma)=\dim \M_g$; see \cite{kyoto}.
 
\subsection{Curves with many automorphisms}\label{many-aut}
Let $\X$ be a genus $g\geq 2$ curve defined over $\C$,  $\p \in \M_g$ its corresponding moduli point, and $G:=\Aut (\X)$.  
We say that $\X$ has \emph{many automorphisms} if $\p \in \M_g$ has a neighborhood $U$ (in the complex topology) such that all curves corresponding to points in $U \setminus \{\p \}$ have automorphism group strictly smaller than $\p$.

\begin{lem}[{\cite[Lemma 4.4]{oort-2}} or {\cite[Theorem 2.1]{muller-pink}}]\label{lem-2-4}
Let $\X$ have genus $\geq 2$ as above, let $G:=\Aut (\X)$, and let $\eta : \X \rightarrow \P^1$ the corresponding map with signature $\sigma$. Then the   following are equivalent:
\begin{enumerate}
\item  $\X$ has many automorphisms. 
\item  There exists a subgroup $H < G$ such that $g \left(  \X/H \right) = 0$ and $\X \to \X/H$ has exactly three branch points.  
\item  The quotient $\X/G$ has genus 0 and $\X \to \X/G$ has exactly
  three branch points.  
\item The signature $\sigma$ has moduli dimension 0. 
\end{enumerate}

\end{lem}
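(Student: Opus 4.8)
The plan is to prove the equivalence of the four conditions characterizing curves with many automorphisms, cycling through the implications $(4)\Rightarrow(3)\Rightarrow(2)\Rightarrow(1)\Rightarrow(4)$, using the dimension theory of Hurwitz spaces established above together with the deformation-theoretic meaning of ``many automorphisms.'' The key structural fact I would lean on is that the moduli dimension $\dim(\sigma)$ of a signature equals the dimension of the locus in $\M_g$ swept out by curves admitting a cover of that type, and that a curve has many automorphisms precisely when its full automorphism-group stratum is a single point (locally), i.e.\ zero-dimensional.

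First I would establish $(4)\Rightarrow(3)$. Taking $H=G$ and $\eta:\X\to\X/G$ the quotient map, the signature $\sigma$ of this cover records the ramification data; by the Hurwitz genus formula the genus of $\X$ is determined by $\sigma$, and the moduli dimension of $\sigma$ is the dimension of $\Phi_\sigma(\H_\sigma)$. The dimension of the Hurwitz space $\H_\sigma$, and hence of its image, is governed by the number $r$ of branch points on $\P^1$: since the branch points can be moved by $\Aut(\P^1)\cong PGL_2(\C)$, which is $3$-dimensional, the relevant count is $r-3$. Thus $\dim(\sigma)=0$ forces $g(\X/G)=0$ and $r=3$. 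Conversely this same dimension count gives $(3)\Rightarrow(4)$, so I would package these two as the equivalence $(3)\Leftrightarrow(4)$, with the arithmetic of the Riemann existence theorem / Hurwitz formula as the engine.

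Next, $(3)\Rightarrow(2)$ is immediate by taking $H=G$. For $(2)\Rightarrow(1)$, I would argue deformation-theoretically: if some $H<G$ has $g(\X/H)=0$ with exactly three branch points, then the cover $\X\to\X/H$ is rigid, because a three-branch-point cover of $\P^1$ admits no nontrivial deformations (the three points can be normalized to $0,1,\infty$). Hence $\X$ cannot be deformed within the locus of curves admitting such a cover, so in a small neighborhood $U$ of $\p$ any nearby curve either fails to admit the cover or coincides with $\X$; a standard upper-semicontinuity argument on the size of the automorphism group then shows the automorphism group drops strictly off $\p$, giving many automorphisms. Finally, $(1)\Rightarrow(4)$ follows because many automorphisms means the stratum of curves with automorphism group $G$ is zero-dimensional near $\p$, and this stratum is exactly the image $\Phi_\sigma(\H_\sigma)$ for the signature $\sigma$ of $\eta:\X\to\X/G$; hence $\dim(\sigma)=0$.

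The main obstacle I anticipate is making the deformation/rigidity argument in $(2)\Rightarrow(1)$ precise, namely carefully relating the local structure of $\M_g$ near $\p$ to the Hurwitz space and justifying that a three-branch-point cover is genuinely rigid while controlling how the automorphism group varies under small deformation. In particular, I would need to invoke upper-semicontinuity of $|\Aut|$ under specialization and verify that the cover $\X\to\X/H$ (with $H$ possibly a proper subgroup) still pins down the moduli point, which is where the cited results of Oort and of M\"uller--Pink do the heavy lifting; I would appeal to \cref{lem-2-4}'s source references rather than reprove the rigidity of three-point covers from scratch.
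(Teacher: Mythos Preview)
The paper gives no proof of this lemma: it is stated with attribution to \cite[Lemma 4.4]{oort-2} and \cite[Theorem 2.1]{muller-pink} and then used as a black box. There is therefore nothing in the paper to compare your argument against; the authors simply defer to the cited references.

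That said, your sketch follows the standard route taken in those references. The cycle $(3)\Leftrightarrow(4)$ via the dimension count $r-3$ for the configuration space of branch points modulo $PGL_2(\C)$ is exactly the intended mechanism, and $(3)\Rightarrow(2)$ is trivial. Your own diagnosis of the weak point is accurate: the step $(2)\Rightarrow(1)$ needs more than ``the three-point cover is rigid, hence the automorphism group drops nearby.'' Rigidity of the $H$-cover tells you that the locus in $\M_g$ of curves admitting an $H$-action with that fixed signature is zero-dimensional, but ``many automorphisms'' is a statement about the full group $G=\Aut(\X)$, not about $H$. One must argue that any nearby curve $\X'$ with $|\Aut(\X')|\geq |G|$ would in particular carry an $H$-action with the same signature (since the $G$-cover $\X\to\X/G$ factors through the $H$-cover, and the signature is locally constant on the relevant stratum), hence would lie in that zero-dimensional locus, forcing $\X'\cong\X$. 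Combined with the fact that $\M_g$ has only finitely many automorphism strata, this gives the neighborhood $U$. Your appeal to upper-semicontinuity is in the right spirit but, as you note, the cited sources are where this is made precise.
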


\begin{question}[F. Oort]
If $\X$ has many automorphisms, does $\Jac \X$ have complex multiplication? 
\end{question}

Wolfart answered this question  for all curves of genus $g \leq 4$;  see \cite[\S5]{wolfart}.

\section{Superelliptic curves with many automorphisms}\label{Smany}
The term \emph{superelliptic curve} has been used differently by many
authors.
Most use it to mean a smooth projective curve $\X$ with affine
equation of the form
$y^n = f(x)$, where $f(x) \in \complex[x]$ has discriminant $\Delta
(f)\neq 0$.  If $H := \langle \tau \rangle
\subseteq \Aut(\X)$ is the subgroup generated by $\tau(y) = \zeta_ny$, then it is sometimes further required that $H$ be normal (or central)
in $\Aut(\X)$.  

We will follow the definition in \cite{HQS}.  Specifically, a
\emph{superelliptic curve} is a smooth projective curve $\X$ of genus
$\geq 2$ with
affine equation $y^n = \prod_{i=1}^r (x - a_i)$, with the $a_i$ distinct
    complex numbers such that
\begin{enumerate}[\upshape (i)]
  \item If $H$ is as above,
then $H$ is \emph{normal} in $\Aut(\X)$.
   \item Either $n \mid r$ or $\gcd(n, r) = 1$ (this guarantees that
     all branch points have index $n$).
\end{enumerate}

\begin{rem}
In fact, if $\X$ is a superelliptic curve with many automorphisms, we
have that $n \mid r$ or $ r \equiv -1 \pmod{n}$, see  
\end{rem}

If $\X$, $H$, and $\tau$ are
as above, we call $\tau$ an
\emph{superelliptic automorphism (of level $n$)} and $H$ a
\emph{superelliptic group (of level $n$)} of $\X$.  

Suppose $\X$ is a superelliptic curve,
with superelliptic group $H$ and corresponding $H$-cover $\pi: \X
\to \proj^1$.  If $G = \Aut(\X)$, then there is a short
exact sequence $1 \to H \to G \to \overline{G} \to 1$, where
$\overline{G}$ is a group of M\"obius transformations keeping the
set of branch points of $\pi$ invariant.  We call $\ol{G}$ the
\emph{reduced automorphism group} of $\X$.

We also define a \emph{pre-superelliptic curve} to be a curve
satisfying all the requirements of a superelliptic curve except
possibly for
(i) above.  In this case, if $N$ is the normalizer of $H$ in
$\Aut(\X)$, then we have a similar exact sequence $1 \to H \to N \to
\overline{N} \to 1$, and we call $\ol{N}$ the reduced
automorphism group of $\X$.  In this case, $H$ is called a
\emph{pre-superelliptic group} and $\tau$ is called a
\emph{pre-superelliptic automorphism}.

Because verifying that a curve is
pre-superelliptic does not depend on computing its entire automorphism
group, it can be significantly easier than verifying that a curve is superelliptic.

As an immediate
consequence of \cite[Lem.\ 1]{HQS}, we obtain the following proposition.

\begin{prop}\label{Pnormalcentral}
If $\X$ is a pre-superelliptic curve, then any pre-superelliptic group is in
    fact \emph{central} in its normalizer.
  \end{prop}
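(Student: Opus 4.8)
The plan is to prove the statement by showing directly that every element of the normalizer $N$ of the pre-superelliptic group $H = \langle \tau \rangle$ in $\Aut(\X)$ commutes with $\tau$; since $H$ is cyclic, generated by $\tau$, this is exactly centrality. So I fix $\sigma \in N$ and write $\sigma^{-1}\tau\sigma = \tau^{b}$ with $b \in (\Z/n)^{\times}$, and the entire goal reduces to proving $b \equiv 1 \pmod n$.

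First I would exploit the two structures carried by $\tau$. On one hand $\C(\X)^{H} = \C(x)$, so $\sigma$ restricts to an automorphism of $\C(x)$, i.e. a M\"obius transformation $\phi \in \Aut(\P^1)$ with $\sigma(x) = \phi(x)$. On the other hand, from $\tau(\sigma(y)) = \sigma(\tau^{b}(y)) = \zeta_n^{b}\,\sigma(y)$ we see that $\sigma(y)$ lies in the $\zeta_n^{b}$-eigenspace of $\tau$ acting on $\C(\X) = \bigoplus_{j=0}^{n-1} y^{j}\C(x)$; since that eigenspace is $y^{b}\C(x)$, we may write $\sigma(y) = y^{b} R(x)$ for some $R \in \C(x)^{\times}$.

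Next I would feed this into the defining relation $y^{n} = f(x)$. Applying $\sigma$ and using $\sigma(y)^{n} = (y^{n})^{b} R(x)^{n} = f(x)^{b} R(x)^{n}$ together with $\sigma(f(x)) = f(\phi(x))$ gives the functional equation $f(x)^{b} R(x)^{n} = f(\phi(x))$ in $\C(x)$. Passing to divisors on $\P^1$ and reducing the coefficients modulo $n$ annihilates the term $n\operatorname{div}(R)$, leaving $b\cdot\operatorname{div}(f) \equiv \phi^{*}\operatorname{div}(f) \pmod n$. Because $\phi$ permutes the branch points of $\pi$ — the finite points $a_i$, where $\operatorname{div}(f)$ has coefficient $1$, together with $\infty$, where its coefficient is $-r$ — comparing coefficients at each branch point yields $b\cdot m(Q) \equiv m(\phi(Q)) \pmod n$, where $m$ records these local multiplicities. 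When $n \mid r$ the point $\infty$ is unramified, so $\phi$ permutes the $a_i$ among themselves and the comparison at any $a_i$ reads $b\cdot 1 \equiv 1$, giving $b \equiv 1$ at once.

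The decisive case, and the one I expect to be the main obstacle, is $\gcd(n,r)=1$: then $\infty$ is itself a branch point and $\phi$ may interchange it with the finite $a_i$, so an isolated comparison only produces relations such as $br \equiv -1$ rather than $b \equiv 1$. The way around this is to note that $g \geq 2$ forces $r \geq 2$, while $\phi$, being injective, carries at most one finite branch point to $\infty$; hence at least one $a_i$ maps to a finite branch point $a_j$, and there the relation reads $b\cdot 1 \equiv 1 \pmod n$, giving $b \equiv 1$ directly. In every case $b \equiv 1 \pmod n$, so $\sigma$ commutes with $\tau$ and $H$ is central in $N$. This is precisely the structural information one extracts from \cite[Lem.\ 1]{HQS}, which pins down the shape $\sigma(y) = y\,R(x)$ of the normalizing automorphisms, making the conclusion immediate.
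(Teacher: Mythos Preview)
Your argument is correct: the eigenspace decomposition gives $\sigma(y)=y^{b}R(x)$, the resulting identity $f(x)^{b}R(x)^{n}=f(\phi(x))$ reduced modulo $n$ on divisors forces $b\equiv 1\pmod n$ once you locate a finite branch point sent to a finite branch point, and $r\ge 2$ (from $g\ge 2$) guarantees such a point exists even when $\gcd(n,r)=1$. The paper itself gives no argument beyond invoking \cite[Lem.~1]{HQS}; what you have written is precisely an unpacking of that lemma, so your approach coincides with the paper's, just made explicit.
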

  
   Superelliptic curves over \emph{finite} fields were described up to isomorphism in \cite{Sa}.
  For more on arithmetic aspects of such curves we refer to \cite{m-sh}. 

\subsection{Superelliptic curves with many automorphisms}
In the rest of this subsection, we construct a list containing all superelliptic curves with many automorphisms.  
As above, let $\X$ be a pre-superelliptic
curve defined over $\C$ with automorphism group $G := \Aut(\X)$, and
pre-superelliptic group $H$ generated by $\tau$ of order $\geq 2$.
Let $N \subseteq G$ be the normalizer of $H$,
and let $\ol{N} := N/H$ be the reduced automorphism
group. 
In fact, what we construct is the list of all pre-superelliptic curves
$\X$ as above such that the quotient morphism $\X \to \X/N$ is branched at exactly three
points.  Since $N = G$ for a superelliptic curve, this list contains
all superelliptic curves $\X$ such that $\X \to \X/G$ is branched at
exactly three points, thus, by \cref{lem-2-4}, all
superelliptic curves with many automorphisms.

\begin{prop} \label{prop-1}
Let $\X$ be a pre-superelliptic curve, let $N \subseteq \Aut(\X)$ be the normalizer of the
a pre-superelliptic group $H$ of level
$n$, and let $\ol{N} = N/ H$.  Then $\ol{N}$ is isomorphic to
either $C_m$, $D_{2m}$, $A_4$, $S_4$, or $A_5$.  If the quotient map $\Phi: \X \to \X / N$
is branched at exactly three points, then furthermore:
\begin{enumerate}[\upshape (i)]
\item If $\ol{N} \iso C_m$  then $\X$ has equation $y^n=x^m+1$ or $y^n=x(x^m+1)$. 
\item If $\ol{N} \iso D_{2m}$  then $\X$ has equation  $y^n=  x^{2m}-1$ or $y^n= x (x^{2m}-1)$.
\item If $\ol{N}\iso A_4 $  then $\X$ has equation  $y^n = f(x)$    where $f(x)$ is  the following 
\[y^n = x (x^4-1)  (x^4 + 2i \sqrt{3} x^2 +1). \]
Furthermore, the $A_4$-orbit of $\infty$ consists of itself and the roots of $x(x^4-1)$.
\item If $\ol{N} \iso S_4 $   then $\X$ has equation $y^n = f(x)$   where $f(x)$ is one of the following: 
$r_4(x)$,  $s_4(x)$,  $t_4(x)$, $r_4(x)s_4(x)$,  $r_4(x)t_4(x)$,  $s_4(x)t_4(x)$,  $r_4(x)s_4(x)t_4(x)$,    
where $r_4$, $s_4$, $t_4$ are as in \cref{S4-pols}.   Furthermore, the $S_4$-orbit of $\infty$ consists of itself and the roots of $t_4(x)$.
\item If $\ol{N} \iso A_5$ then $\X$ has equation $y^n = f(x)$ where $f(x)$ is one of the following:   
$r_5(x)$, $ s_5(x)$,  $ t_5(x)$, $ r_5(x)s_5(x)$,    $ r_5(x)t_5(x)$, $ s_5(x)t_5(x)$, $r_5(x)s_5(x)t_5(x)$, 
where $r_5$, $s_5$, $t_5$ are as in \cref{A5-pols}.  Furthermore, the $A_5$-orbit of $\infty$ consists of itself and the roots of $s_5(x)$.
\end{enumerate}
\end{prop}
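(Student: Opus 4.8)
\textit{Proof proposal.} The plan is to separate the group-theoretic skeleton from the explicit geometry. Since $H$ is central in $N$ by \cref{Pnormalcentral}, the quotient $\ol N = N/H$ acts faithfully by M\"obius transformations on $\X/H \cong \proj^1$, so $\ol N$ is a finite subgroup of $\Aut(\proj^1)\cong PGL_2(\complex)$. Klein's classification of such subgroups gives exactly $C_m$, $D_{2m}$, $A_4$, $S_4$, $A_5$, which is the first assertion; it remains to pin down the equations under the three-branch-point hypothesis.

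First I would constrain the branch locus $B\subseteq\proj^1$ of $\pi\colon\X\to\proj^1$. Because $H\trianglelefteq N$, the action of $\ol N$ permutes the fibers of $\pi$, hence preserves $B$, so $B$ is a union of $\ol N$-orbits. I would then factor $\Phi$ as $\X\xrightarrow{\pi}\proj^1\xrightarrow{q}\proj^1/\ol N=\X/N$ and use multiplicativity of ramification indices, $e_P(\Phi)=e_P(\pi)\,e_{\pi(P)}(q)$. The map $q$ ramifies exactly over the special orbits of $\ol N$ --- two orbits with indices $(m,m)$ for $C_m$, and three orbits with indices $(2,2,m)$, $(2,3,3)$, $(2,3,4)$, $(2,3,5)$ for $D_{2m},A_4,S_4,A_5$ respectively --- so the $q$-image of every special orbit is automatically a branch point of $\Phi$, as is the image of every orbit contained in $B$. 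Comparing with the hypothesis that $\Phi$ has exactly three branch points then forces: for $\ol N\in\{D_{2m},A_4,S_4,A_5\}$ that $B$ lie inside the three special orbits, and for $\ol N=C_m$ that $B$ consist of (a subset of) the two fixed points together with exactly one generic orbit.

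Next I would make this explicit. Normalizing coordinates so that $\ol N$ occupies its standard position and $\infty$ lies in the orbit named in each ``Furthermore'' clause, the special orbits are the zero loci of the classical ground forms of $\ol N$: for $C_m$ the points $0,\infty$ and a generic orbit $\{x^m=c\}$; for the polyhedral groups the Klein forms --- for $A_4$ these cut out the size-$6$ orbit containing $\infty$ (with form $x(x^4-1)$) and the two tetrahedral orbits $O_4,O_4'$ (with forms $x^4\pm 2i\sqrt3\,x^2+1$), while for $S_4$ and $A_5$ they give the triples $r_4,s_4,t_4$ and $r_5,s_5,t_5$. Taking $f$ to be the product of the orbit polynomials indexed by the orbits that actually occur in $B$ --- the factor attached to the $\infty$-orbit having degree one less than the orbit size, since $\infty$ is not a finite root --- yields the candidate equations, after which condition \textup{(ii)} ($n\mid r$ or $\gcd(n,r)=1$) and $g\ge 2$ cut down the admissible pairs $(n,f)$.

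The step I expect to be the main obstacle is proving that each surviving product has reduced automorphism group \emph{exactly} $\ol N$, with nothing omitted and nothing over-counted. The essential point is that $\ol N$ is not the full $PGL_2(\complex)$-stabilizer of $B$ but only the subgroup that \emph{lifts} to $N$; a M\"obius transformation $\phi$ lifts precisely when $f\circ\phi$ equals $f$ times an $n$-th power and the appropriate Jacobian factor, a condition governed by divisibility relations between $n$ and the degrees involved. This is what separates the cyclic from the dihedral equations --- on $y^n=x^m+1$ the involution $x\mapsto 1/x$ lifts only when $n\mid m$, promoting $C_m$ to $D_{2m}$ --- and it is also what collapses the a priori list for $A_4$: the normalizer of $A_4$ in $PGL_2(\complex)$ is $S_4$, so any $B$ that is $S_4$-stable (the octahedron orbit, the full cube orbit $O_4\cup O_4'$ given by the union of the two tetrahedral orbits, or their union) is promoted to the $S_4$ case, leaving the pairing of the size-$6$ orbit with a single tetrahedral orbit as the only genuinely $A_4$ configuration. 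By contrast $S_4$ and $A_5$ are self-normalizing in $PGL_2(\complex)$, so no promotion occurs and all seven nonempty unions of their three special orbits persist, accounting for the seven equations in (iv) and (v). Carrying out this lifting-and-promotion analysis uniformly across the five groups --- and checking the outcome against the Sage and GAP computations --- is what reduces the candidate list to the equations stated.
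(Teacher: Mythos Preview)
Your proposal follows essentially the same strategy as the paper's proof: both factor $\Phi$ as $\X \xrightarrow{\pi} \P^1_x \xrightarrow{\phi} \P^1_z$, use the three-branch-point hypothesis to force the branch locus $W$ of $\pi$ to be a union of special $\ol{N}$-orbits (equivalently, $W = \phi^{-1}(T)$ for some $T$ contained in the branch locus of $\phi$), and then enumerate the resulting polynomials $f$. You phrase this via Klein's ground forms and abstract orbit structure, whereas the paper pulls explicit rational functions $\phi(x)$ from \cite{san-2} and computes the radicals $\gamma_i$ of the fiber polynomials over each branch point $q_i$. These are equivalent presentations of the same argument, and your claim that the $q$-image of every special orbit is automatically a branch point of $\Phi$ is exactly the paper's observation that $S$ is always contained in the branch locus of $\Phi$.

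The one genuine difference is in how the filtering step is handled. You identify as the ``main obstacle'' the question of which M\"obius transformations preserving $B$ actually \emph{lift} to $N$, governed by a divisibility condition between $n$ and the degrees involved. The paper does not engage with lifting inside this proof at all: its filtering is purely about \emph{promotion} --- for $A_4$ it simply notes that certain choices of $T$ produce $S_4$-invariant branch loci and so belong to case (iv), and for $D_{2m}$ it appeals to a symmetry exchanging $q_2$ and $q_3$. The lifting question you raise is instead deferred to \cref{Pnvalues}(ii), where it is shown that once the signature is uniformly $(\tau,\ldots,\tau)$ every element of the $PGL_2$-stabilizer of $B$ lifts automatically; so in the paper's logic promotion is the \emph{only} filter needed, and your lifting analysis, while correct, is doing more work than the paper's argument requires at this stage. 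Your promotion discussion for $A_4 \hookrightarrow S_4$ via self-normalizing subgroups does match the paper's reasoning and is a cleaner way to say it.
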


\begin{rem}
The notation $r_4$, $s_4$, $t_4$, $r_5$, $s_5$, $t_5$ in \cref{prop-1}  above is consistent with that used in \cite{muller-pink}.
\end{rem}

\begin{proof}
The first statement holds because $\ol{N} \subseteq PGL_2(\C)$, using
the well-known classification of finite subgroups of $PGL_2(\C)$.

We have the following diagram:
\[\Phi: \X  \buildrel{H} \over \lar  \P^1_x \buildrel {\ol{N}} \over
  \lar \P^1_z = \X/N.\]
The group $\ol{N}$ is the monodromy group of the cover $\phi:\P^1_x
\to \P^1_z$.
Let $y^n = f(x)$ be the
equation of $\X$, where $f(x)$ is a separable polynomial.  Now, the
map $\phi$ is given by the rational function $z$ in $x$, which 
has degree $|\ol{N}|$.

Let $S = \{q_1, q_2, q_3\}$ be the set of branch points of $\Phi: \X
\to \P^1_z$, let $W$ be the set of branch points of $\pi: \X \to \P^1_x$ (that is,
the roots of $f$ and possibly $\infty$).  Since $\X \to \P^1_z$ is
Galois and $|H| \geq 2$, there
exists a non-empty set $T \subseteq S$ such that $W =
\phi^{-1}(T)$.

Write the rational function $z$ in lowest terms as a ratio of
polynomials $\frac{\Psi(x)}{\Upsilon(x)}$.
We write $z-q_i=\frac{\Gamma_i (x)}{\Upsilon(x)}$ in lowest terms, for each branch point $q_i$, $i=1,2,3$, where $\Gamma_i (x) \in \complex[x]$. Hence,
\[\Gamma_i   (x)=\Psi(x)-q_i\cdot\Upsilon(x)\] 
is a degree $|\ol{N}|$ polynomial and the multiplicity of each root of
$\Gamma_i (x)$ corresponds to the ramification index for each $q_i$
(if $q_i = \infty$, then $\Gamma_i(x) := \Upsilon(x)$).  The roots of
$\Gamma_i(x)$ are the preimages of $q_i$ under $\phi$.  So letting
$\gamma_i(x) = \text{rad}(\Gamma_i(x))$, we conclude that the equation of
$\X$ is given by $y^n = f(x)$, where
\begin{equation}\label{Eroots}
  f(x) = \prod_{q_i \in T} \gamma_i(x).
\end{equation}

The rest of the proof proceeds similarly to \cite[\S4]{san-2}.  In
particular, for $$\ol{N} \in \{C_m, D_{2m}, A_4, S_4, A_5\},$$ we can make a change of
variables in $x$ and $z$ so that $z = \phi(x)$ is given by the
appropriate entry in the first 5 rows of \cite[Table 1]{san-2}.  We
now go case by case.

 \textbf{i)   $\ol{N} \cong C_m$:}   In this case, $$\phi(x) = x^m,$$
 which has branch points $q_1 = \infty$ and $q_2 = 0$.
 Hence $\gamma_1(x)=1$ and $\gamma_2(x)=x$.  After a change of
 variables in $z$, we may assume without loss of generality that $q_3
 = -1$, which yields $\gamma_3(x) = x^m + 1$.  Since
the covering $\X \lar \P^1_z$ has three branch points, we must have
$q_3 \in T$.   From \cref{Eroots}, we have $f(x)=x^m+1$ or $f(x) =
x(x^m + 1)$.  This proves (i).


\textbf{ii)  $\ol{N} \cong D_{2m}$:}  In this case,
\[\phi(x)=x^m+\frac{1}{x^m}= \frac {x^{2m} +1 } {x^m},\] 
which has branch points $q_1 = \infty$, $q_2 = 2$, and $q_3 = -2$.
Hence $\gamma_1(x) = x$, $\gamma_2(x) = x^m - 1$, and $\gamma_3(x) =
x^m + 1$.
The involution in the dihedral group permutes the branch points $q_2$ and
$q_3$, so $q_2 \in T$ if and only if $q_3 \in T$.  But if neither is
in $T$, then \cref{Eroots} shows that $\X$ has equation $y^n = x$,
contradicting the assumption that $g(\X) \geq 2$.  So $T = \{q_1, q_2,
q_3\}$ or $T = \{q_2, q_3\}$.  From \cref{Eroots}, we have the two possible equations

\[ y^n = x^{2m} -1, \quad y^n = x (x^{2m} -1). \]
This proves (ii).

\textbf{iii)    $\ol{N} \cong A_4$:} In this case,
 \[
 \phi(x)=\frac{x^{12}-33x^8-33x^4+1}{x^2(x^4-1)^2},
\]
which has branch points $q_1=\infty$ of index $2$, $q_2=6i\sqrt{3}$ of
index $3$, and
$q_3=-6i\sqrt{3}$ of index $3$, where $i^2 = -1$.
Hence
\[
  t_4 := \gamma_1  = x(x^4-1), \quad \gamma_2 = x^4+2i\sqrt{3}x^2+1, \quad \gamma_3 = x^4-2i\sqrt{3}x^2+1,
\]
with $\infty$ in the fiber of $q_1$ as well.
The branch points $q_1=\infty$, $q_2=6i\sqrt{3}$, and
$q_3=-6i\sqrt{3}$ are the branch points of the covering $\pi : \X \to
\P^1$.  Let $s_4$ and $t_4$ be as in \cref{S4-pols} below.  If neither $q_2$ nor $q_3$ is in $T$, then \cref{Eroots} shows
that the equation of $\X$ is $y^n = t_4$.  Observe that
$\gamma_2\gamma_3 = s_4$.  So if both $q_2$ and $q_3$ are in $T$, then
\cref{Eroots} shows that the equation of $\X$ is either $y^n = s_4t_4$
or $y^n = s_4$.
In all cases, the reduced automorphism group is actually $S_4$, so we
may assume that exactly one of $q_2$ or $q_3$ is in $T$.  Since the
two choices are conjugate, we may assume $q_2 \in T$ but $q_3 \notin T$.

So $T = \{q_1, q_2\}$ or $\{q_2\}$.  By \cref{Eroots}, we have the two possible equations
\[
 y^n = (x^4 + 2i\sqrt{3}x^2 + 1), \quad y^n=  x (x^4-1) (x^4+2i\sqrt{3}x^2+1).
\]
The last   assertion of (iii) is true because $\infty \in \phi^{-1}(q_1)$.


\textbf{iv)  $\ol{N} \cong S_4$:} In this case,
\[
\phi (x) = \frac  {(x^8+14x^4+1)^3} {108 \, x^4 (x^4-1)^4},
\]
which has branch points $q_1=1$, $q_2=0$ and $q_3=\infty$.  Also, 
$\infty \in \phi^{-1}(q_3)$. Then
\begin{equation}\label{S4-pols}
\begin{split}
r_4(x)&: \gamma_1(x) = x^{12}-33x^8-33x^4+1\\
s_4(x)&:= \gamma_2(x) = x^8 + 14x^4 + 1\\
t_4(x)&:= \gamma_3(x) = x(x^4 - 1), \\
\end{split}
\end{equation}
Every possible case occurs here.  So by \cref{Eroots}, the equation of the curve
$\X$ is $y^n=f(x)$ for $f(x) $ one of
\[
r_4(x), s_4 (x), t_4(x), r_4(x) s_4(x),  r_4(x) t_4(x),  s_4(x) t_4 (x), r_4(x) s_4(x) t_4 (x). 
\]
The last  assertion of (iv) is true because $\infty \in \phi^{-1}(q_3)$.

\textbf{v)  $\ol{N} \cong A_5$:} In this case,
\[
  \phi(x) = \frac{(-x^{20} + 228x^{15} - 494^{10} -228x^5 -
    1)^3}{(x(x^{10}+11x^5-1))^5},
\]
which has branch points of $q_1 = 0$, $q_2 = 1728$, and $q_3 =
\infty$.
One computes
\begin{equation}\label{A5-pols}
\begin{split}
r_5(x)&: \gamma_1(x) = =x^{20}-228x^{15}+494x^{10}+228x^5+1\\
s_5(x)&:= \gamma_2(x) = x(x^{10}+11x^5-1)\\
t_5(x)&:= \gamma_3(x) = x^{30}+522x^{25}-10005x^{20}-10005x^{10}-522x^5+1, \\
\end{split}
\end{equation}
and one notes that $\infty \in \phi^{-1}(q_2)$ as well.    Every
possible case occurs here.  So by \cref{Eroots}, the equation of the curve $\X$ is
$y^n=f(x)$ for $f(x) $ one of
\[
r_5(x), s_5 (x), t_5(x), r_5(x) s_5(x),  r_5(x) t_5(x),  s_5(x) t_5 (x), r_5(x) s_5(x) t_5 (x). 
\]
The last assertion of (v) is true because $\infty \in \phi^{-1}(q_2)$.
\end{proof}

\begin{rem}\label{Rpermute}
It is clear from the proof of \cref{prop-1} that in all cases, the
group $\ol{N}$ permutes the branch locus of $\pi: \X \to \proj^1_x$.
\end{rem}

\begin{prop}\label{Pnvalues}
\begin{enumerate}[\upshape (i)]
\item  Let $\X$ be a pre-superelliptic curve satisfying the conditions
  of \cref{prop-1} given by an affine equation
$y^n= f(x)$.  Suppose $\ol{N} \in \{A_4, S_4, A_5\}$ as in \cref{prop-1}(iii), (iv), or (v).  Then $n \mid (\deg f(x) + \delta)$, where
\[
\delta = \begin{cases} 1 & t_4(x) \mid f(x) \text{ or } s_5(x) \mid f(x) \\
    0 & \text{otherwise.} \end{cases}
\]
Furthermore, $\deg(f) + \delta$ is the number of branch points of the map $\X \to \proj^1$ given by projection to the $x$-coordinate.
\item Conversely, if $\X$ is a smooth projective curve given by an
  affine equation $y^n = f(x)$ as in \cref{prop-1}(iii), (iv), or (v)
  with $n \mid (\deg (f(x) + \delta)$, then $\X$ satisfies the assumptions of
  \cref{prop-1} (in particular, $\X \lar \X/N$ is branched at three
  points, with $N$ as in \cref{prop-1}).
\end{enumerate}

\end{prop}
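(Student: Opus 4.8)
My plan is to reduce both parts to a single statement about the local monodromy of the cyclic cover $\pi \colon \X \to \P^1_x$ defined by $y^n = f(x)$. I write $W$ for the branch locus of $\pi$. Since $f$ is separable, the finite points of $W$ are exactly the $\deg f$ roots of $f$, each with local monodromy $\zeta_n$ (a fixed generator of $H$), while $\infty$ is a branch point precisely when $n \nmid \deg f$, in which case its local monodromy is $\zeta_n^{-\deg f}$. By the computations in \cref{prop-1} (the ``Furthermore'' clauses), $\infty$ lies in the fiber $\phi^{-1}(q_{i_0})$, where $q_{i_0}$ is the branch point corresponding to $t_4$ in the $A_4$ and $S_4$ cases and to $s_5$ in the $A_5$ case. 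Hence $\infty \in W$ if and only if $q_{i_0} \in T$, i.e.\ if and only if $\delta = 1$; this immediately gives the ``Furthermore'' claim of (i), namely that $\pi$ has exactly $\deg f + \delta$ branch points. It then remains to prove the divisibility $n \mid \deg f + \delta$ and its converse.

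For part (i), I would split on $\delta$. If $\delta = 0$, then $\infty \notin W$, so $\infty$ is unramified in $\pi$ and $n \mid \deg f = \deg f + \delta$. If $\delta = 1$, I use that $\X$ is pre-superelliptic with reduced group $\ol N$, so every $\bar\sigma \in \ol N$ lifts to an element of $N \subseteq \Aut(\X)$ normalizing $H$. The key tool is the standard liftability criterion for cyclic covers: a lift of $\bar\sigma$ exists only if $\bar\sigma$ permutes $W$ and there is a unit $u \in (\Z/n)^\times$ with $\mu(\bar\sigma(p)) = u\,\mu(p)$ for all $p \in W$, where $\mu$ is the local monodromy. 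By \cref{Rpermute} the set $W = \phi^{-1}(T)$ is $\ol N$-invariant, and by the orbit computation in \cref{prop-1} the point $\infty$ lies in an $\ol N$-orbit of size at least $6$ consisting of $\infty$ together with finite branch points. Choosing $\bar\sigma$ with $\bar\sigma(\infty) \neq \infty$ (possible since the stabilizer of $\infty$ is proper), the element $\bar\sigma$ sends $\infty$ to a finite point while still mapping several finite branch points to finite branch points; the latter maps force $u = 1$, whence $\mu(\infty) = \mu(\bar\sigma(\infty)) = \zeta_n$. Comparing with $\mu(\infty) = \zeta_n^{-\deg f}$ gives $\deg f \equiv -1 \pmod n$, i.e.\ $n \mid \deg f + 1 = \deg f + \delta$.

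For the converse (ii), I would assume $n \mid \deg f + \delta$ and run the argument backwards. If $\delta = 0$ then $n \mid \deg f$, so $\infty$ is unramified and every branch point of $\pi$ has index $n$; if $\delta = 1$ then $\deg f \equiv -1 \pmod n$, so $\gcd(n, \deg f) = 1$ and $\infty$ is totally ramified of index $n$. In either case $\X$ is pre-superelliptic with superelliptic group $H$. Moreover the local monodromy is now uniformly $\zeta_n$ on all of $W$: this is clear at the finite points, and at $\infty$ (when $\delta = 1$) we get $\zeta_n^{-\deg f} = \zeta_n$ since $-\deg f \equiv 1 \pmod n$. As $\ol N$ permutes $W$ and preserves this constant monodromy, the liftability criterion with $u = 1$ shows every $\bar\sigma \in \ol N$ lifts to $\Aut(\X)$ centralizing $H$; combined with \cref{Pnormalcentral} this realizes $\ol N \cong A_4$, $S_4$, or $A_5$ as $N/H$. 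Finally, the composite $\Phi = \phi \circ \pi \colon \X \to \X/N = \P^1_z$ branches only over images of branch points of $\pi$ and of $\phi$; since $\pi$ branches over $W \subseteq \phi^{-1}(\{q_1,q_2,q_3\})$ and $\phi$ branches exactly over $\{q_1,q_2,q_3\}$, the map $\Phi$ is branched at exactly these three points, so $\X$ satisfies the hypotheses of \cref{prop-1}.

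The hard part will be the careful deployment of the liftability criterion for cyclic covers together with the bookkeeping of the local monodromy at $\infty$, which is $\zeta_n^{-\deg f}$ rather than $\zeta_n$; the sign here is exactly what produces $n \mid \deg f + 1$ (and not $\deg f - 1$). The entire argument hinges on the geometric input from \cref{prop-1} that $\infty$ shares its $\ol N$-orbit with finite branch points, which is what forces the twisting unit $u$ to be trivial. Once that is established, both the divisibility and its converse drop out of the single identity $\mu(\infty) = \zeta_n^{-\deg f}$ compared against the common value $\zeta_n$.
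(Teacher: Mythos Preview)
Your proposal is correct and follows essentially the same approach as the paper: both arguments identify $\infty\in W \iff \delta=1$ via the orbit computation in \cref{prop-1}, then use that the $\ol{N}$-action forces the local monodromy at $\infty$ to agree with the monodromy $\tau$ at the finite branch points, from which the divisibility (and its converse) follow. Your packaging via a ``liftability criterion'' with a twisting unit $u$ is a mild reformulation of what the paper does directly; note that your finite-to-finite argument forcing $u=1$ is unnecessary, since \cref{Pnormalcentral} already gives that $H$ is central in $N$, so any lift commutes with $\tau$ and $u=1$ is automatic --- this is exactly how the paper phrases the key step (``$\infty$ is permuted with other branch points \dots\ and $\ints/n$ is central''). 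For part~(ii) the paper instead changes variables so that $\infty$ is not a branch point and lifts $\ol\alpha$ explicitly, whereas you invoke the sufficiency direction of the liftability criterion; the two arguments are equivalent, and yours is arguably cleaner since it avoids the change of coordinates.
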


\begin{proof}
We first note that by \cref{prop-1}(iii), (iv), and (v), the orbit of $\infty$ under $\ol{\Aut(\X)}$ consists of the roots of $t_4(x)$ (in cases (iii) and (iv)) or of $s_5(x)$ (in case (v)).  So $\infty$ is a branch point of $\X \to \proj^1$ if and only if $t_4(x) \mid f(x)$ or $s_5(x) \mid f(x)$; that is, if and only if $\delta = 1$.  

Since $f(x)$ is separable, the monodromy action induced by a small counterclockwise loop around any root of $f(x)$ takes a point $(a,b)$ to $(a, e^{2\pi i/n}b)$.  In particular, if $\tau \in \Aut_{\proj^1}(\X)$ is the isomorphism $(a, b) \mapsto (a, e^{2\pi   i/n}b)$, then $\tau$ has order $n$ and the signature of $\X \to \proj^1$ is
\[
  \begin{cases} (\tau, \ldots, \tau) & \delta = 0 \\
    (\tau, \ldots, \tau, \sigma) & \delta = 1,
  \end{cases}
\]
for some $\sigma \in \ints/n$ corresponding to the branch point
$\infty$.  Now, the product of all entries in the signature must be
the identity.  In the first case, this implies that the number of
branch points is divisible by $n$, so $\deg f(x)$ is divisible by $n$.
In the second case, since $\infty$ is permuted with other branch
points of the cover and $\ints/n$ is central in $\Aut(\X)$, we have
that $\sigma = \tau$.  Thus the number of branch points, which is now
$\deg f(x) + 1$, is also divisible by $n$.  This completes the proof
of part (i).

Let $\pi: \X \to \proj^1$ be the projection to the $x$-coordinate.  To
prove (ii), it suffices to show that if $\ol{\alpha} \in
\Aut(\proj^1)$ preserves the branch
locus of $\pi$, then $\ol{\alpha}$ lifts to an
element $\alpha$ of $\Aut(X)$.  By the proof of part (i), the
signature of $\pi$ is $(\tau, \ldots, \tau)$ for some $\tau \in
\Aut(\X/\proj^1)$.  After a change of variables, we may assume
that $\infty$ is not a branch point, so the affine equation is $y^n =
f(x)$ where $f(x)$ is separable.  Now, $\ol{\alpha}$ permutes the roots
of $f(x)$, since $\infty$ is not a branch point.  Thus it is clear that $\ol{\alpha}$
lifts to an automorphism of $\X$ given by acting trivially on $y$.  This
proves (ii).
\end{proof}

The following corollary is immediate.
\begin{cor}\label{Cuniformsignature}
If $\X$ is a superelliptic curve with many automorphisms and $H$ is a
superelliptic group, then the cover $\X \to \X/H$ has signature
$(\tau, \ldots, \tau)$ for some superelliptic automorphism $\tau$. 
\end{cor}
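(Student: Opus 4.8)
The plan is to analyze the cyclic cover $\pi \colon \X \to \X/H \cong \P^1_x$ directly. Since $\X$ has many automorphisms and $N = G$ for a superelliptic curve, \cref{lem-2-4} puts us in the situation of \cref{prop-1}: the quotient $\X \to \X/G$ is branched at three points and $\ol{G} \in \{C_m, D_{2m}, A_4, S_4, A_5\}$, with $\X$ given by one of the listed equations $y^n = f(x)$ with $f$ separable. The Galois group of $\pi$ is $H = \langle \tau \rangle$ with $\tau \colon (x,y) \mapsto (x, \zeta_n y)$, and its branch points are exactly the roots of $f$ together with $\infty$ when the latter ramifies. First I would record that, because $f$ is separable, every root of $f$ is simple, so a small counterclockwise loop about such a root multiplies $y$ by $\zeta_n$; hence the local monodromy of $\pi$ at each finite branch point is precisely $\tau$. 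This already forces every finite branch point to contribute the entry $\tau$ to the signature.

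Next I would dispose of the point at infinity using the arithmetic of $r := \deg f$. By the remark following the definition of a superelliptic curve, a superelliptic curve with many automorphisms satisfies $n \mid r$ or $r \equiv -1 \pmod n$. If $n \mid r$, then $\gcd(n,r) = n$, so $\infty$ splits completely and is unramified; the branch locus of $\pi$ then consists only of the $r$ roots of $f$, and the signature is $(\tau, \dots, \tau)$. If instead $r \equiv -1 \pmod n$, then $\gcd(n,r) = 1$, so there is a single point of $\X$ above $\infty$, totally ramified of index $n$. Since the product of the local monodromies around all branch points is trivial and the $r$ finite ones each equal $\tau$, the monodromy at $\infty$ must be $\tau^{-r} = \tau$, using $-r \equiv 1 \pmod n$. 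Thus the signature is again $(\tau, \dots, \tau)$, now with $r+1$ entries, and assembling the two cases yields the claim.

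I expect the point at infinity to be the only real obstacle: away from $\infty$ the separability of $f$ makes the monodromy computation immediate, whereas at $\infty$ one must know both that the fibre is totally ramified and that the resulting generator of $H$ is $\tau$ itself rather than some other primitive power $\tau^k$. The relation $r \equiv -1 \pmod n$ is exactly what pins down $k = 1$. Conceptually this is the same mechanism already exploited in the proof of \cref{Pnvalues}(i): there the centrality of $H$ in $N$ (\cref{Pnormalcentral}), combined with the fact that $\infty$ lies in the $\ol{N}$-orbit of a finite branch point (\cref{prop-1}(iii)--(v), \cref{Rpermute}), identifies the monodromy at $\infty$ with that at a root of $f$ and hence with $\tau$. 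So for $\ol{G} \in \{A_4, S_4, A_5\}$ one may invoke \cref{Pnvalues}(i) verbatim, while the remaining cases $C_m$ and $D_{2m}$ are handled by the divisibility dichotomy above; either route gives signature $(\tau, \dots, \tau)$.
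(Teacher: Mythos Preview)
Your argument is correct and is essentially the same as the paper's: the corollary is stated as ``immediate'' from \cref{Pnvalues}, whose proof already shows that separability of $f$ forces local monodromy $\tau$ at each root and that the monodromy at $\infty$ is again $\tau$, and you have simply spelled out these details and extended them to the $C_m$ and $D_{2m}$ cases via the Remark's dichotomy $n\mid r$ or $r\equiv -1\pmod n$.
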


Let $\X$ be a pre-superelliptic curve with pre-superelliptic group $H$
with normalizer $N$ in $\Aut(\X)$.  Suppose $X \to X/N$ is branched at
exactly three points.  As a consequence of \cref{prop-1} and
\cref{Pnvalues}, there are finitely many such curves with reduced
automorphism group $A_4$, $S_4$, or $A_5$, and the list of such curves
is exactly \cref{Tcurvelist2} below.  In particular, all
superelliptic curves with many automorphisms and reduced automorphism
group not isomorphic to $C_m$ or $D_{2m}$ appear in \cref{Tcurvelist2}.

For the rest of the paper, our goal is to determine which superelliptic curves  with many automorphisms are CM-type. 

\begin{longtable}{|l|l|l|l|l|l|l|l|}
\caption{Curves with exceptional reduced automorphism  groups.}  
\label{Tcurvelist2}\\
\hline
Nr. & $\ol{N}$   &$n$  & $g$ & $f(x)$  & CM? & Justification \\
\hline \hline
%
  $\X_0$ & $A_4$ & 4 & 3 & $t_4$ & YES & \cref{Pstreitexamples}\\
           \hline
  $\X_1$ &      \multirow{3}{*}{$A_4$}                       &  2   &
                                                                      4
                             & \multirow{3}{*}{$p_4t_4$}& YES & \cref{Pstreitexamples}\\
$\X_2$ &                 &        5  &    16  & & NO & \cref{Pruleout}\\
$\X_3$ &                 &        10   &  36 & & NO &
                                                      \cref{Cmullerpink} \\
  \hline
$\X_4$ &      \multirow{5}{*}{$S_4$}                  &        2  & 5
                             &  \multirow{5}{*}{$r_4$}& YES & \cref{Pstreitexamples}\\
$\X_5$ &                 &        3   & 10  &  & NO& \cref{Pruleout}\\
$\X_6$ &                 &        4   &  15 &  & NO& \cref{Pruleout}\\
$\X_7$ &                 &        6   &  25 &  & NO& \cref{Cmullerpink}\\
$\X_8$ &                 &        12   & 55  & & NO & \cref{Cmullerpink}\\
\hline
$\X_9$ &      \multirow{3}{*}{$S_4$}                  &        2   &  3 &  \multirow{3}{*}{$s_4$}& NO& \cref{Phyperelliptic}\\
$\X_{10}$ &                    &       4   &  9 &  & NO &\cref{Phyperelliptic}\\
$\X_{11}$ &                  &        8   & 21  &  & NO &\cref{Phyperelliptic}\\
\hline
$\X_{12}$ &      \multirow{3}{*}{$S_4$}                  &        2   &  2 & \multirow{3}{*}{$t_4$}& YES &\cref{Pstreitexamples}\\
$\X_{13}$ &                    &        3   & 4  &  & YES & \cref{Pstreitexamples}\\
$\X_{14}$ &                  &        6  &  10  &  & YES & \cref{Pstreitexamples}\\
\hline
$\X_{15}$ &      \multirow{5}{*}{$S_4$}                  &        2   & 9  & \multirow{5}{*}{$r_4s_4$}& NO &\cref{Phyperelliptic}\\
$\X_{16}$ &                   &        4   &  27 &  & NO &\cref{Phyperelliptic}\\
$\X_{17}$ &                   &        5   &  36 &  & NO &\cref{PBadReductionCurves}
\\
$\X_{18}$ &                   &        10   & 81  & & NO &\cref{Phyperelliptic}\\
$\X_{19}$ &                   &        20   & 171  & & NO &\cref{Phyperelliptic}\\
\hline
$\X_{20}$ &      \multirow{5}{*}{$S_4$}                  &        2   &  8 & \multirow{5}{*}{$r_4t_4$}& YES &\cref{Pstreitexamples}\\
$\X_{21}$ &                   &        3   & 16  &  & NO &\cref{PBadReductionCurves}
\\
$\X_{22}$ &                   &        6   &  40 & & NO &\cref{Ctree}\\
  $\X_{23}$ &                  &        9   &  64 & & NO &\cref{Ctree}\\
 $\X_{24}$ &                   &        18   & 136  & & NO &\cref{Ctree}\\
  \hline
$\X_{25}$ &      \multirow{3}{*}{$S_4$}                  &        2   & 6  &  \multirow{3}{*}{$s_4t_4$}& NO &\cref{Phyperelliptic}\\
$\X_{26}$ &                   &        7   &  36 &  & NO &\cref{Pruleout}\\
$\X_{27}$ &                   &        14   & 78  & & NO &\cref{Phyperelliptic}\\
  \hline
$\X_{28}$ &      \multirow{3}{*}{$S_4$}                  &        2   &  12 &  \multirow{3}{*}{$r_4s_4t_4$}& NO &\cref{Phyperelliptic}\\
$\X_{29}$ &                   &        13   & 144  &  & NO &\cref{Pruleout}\\
$\X_{30}$ &                   &        26   &  300 &  & NO &\cref{Phyperelliptic}\\
  \hline
$\X_{31}$ &      \multirow{5}{*}{$A_5$}                  &        2   & 9  &  \multirow{5}{*}{$r_5$}& NO &\cref{Phyperelliptic}\\
$\X_{32}$ &                   &        4   &  27 &  & NO &\cref{Phyperelliptic}\\
$\X_{33}$ &                   &        5   &  36 &  & NO
        &\cref{Pruleout} \\
$\X_{34}$ &                   &        10   & 81  &  & NO &\cref{Phyperelliptic}\\
$\X_{35}$ &                   &        20   &  171 & & NO &\cref{Phyperelliptic}\\
  \hline
$\X_{36}$ &      \multirow{5}{*}{$A_5$}                  &        2   & 5  &  \multirow{5}{*}{$s_5$}& NO &\cref{Phyperelliptic}\\
$\X_{37}$ &                   &        3   &  10 &  & YES &\cref{Pstreitexamples}\\
$\X_{38}$ &                   &        4   &  15 &  & NO &\cref{Phyperelliptic}\\
$\X_{39}$ &                   &        6   &  25 &  & NO &\cref{Phyperelliptic}\\
$\X_{40}$ &                   &        12   & 55  & & NO &\cref{Phyperelliptic}\\
  \hline
$\X_{41}$ &      \multirow{7}{*}{$A_5$}                  &        2   & 14  &  \multirow{7}{*}{$t_5$}& YES &\cref{Pstreitexamples}\\
$\X_{42}$ &                   &        3   &  28 &  & NO &\cref{PBadReductionCurves}
\\
$\X_{43}$ &                   &        5   &  56 &  & NO & \cref{Pruleout}\\
$\X_{44}$ &                   &        6   &  70 &  & NO &\cref{Ctree}\\
$\X_{45}$ &                   &        10   & 126  & & NO & \cref{Cmullerpink}\\
$\X_{46}$ &                   &        15   &  196 &  & NO &\cref{Ctree}\\
$\X_{47}$ &                   &        30   &  406 & & NO &\cref{Ctree}\\
  \hline
$\X_{48}$ &      \multirow{5}{*}{$A_5$}                  &        2   & 15  &  \multirow{5}{*}{$r_5s_5$}& NO &\cref{Phyperelliptic}\\
$\X_{49}$ &                   &        4   & 45  &  & NO &\cref{Phyperelliptic}\\
$\X_{50}$ &                   &        8   &  105 &  & NO &\cref{Phyperelliptic}\\
$\X_{51}$ &                   &        16   & 225  &  & NO &\cref{Phyperelliptic}\\
$\X_{52}$ &                   &        32   & 465  & & NO &\cref{Phyperelliptic}\\
  \hline
$\X_{53}$ &      \multirow{5}{*}{$A_5$}                  &        2   & 24  &  \multirow{5}{*}{$r_5t_5$}& NO &\cref{Phyperelliptic}\\
$\X_{54}$ &                   &        5   & 96  &  & NO &\cref{PBadReductionCurves}
\\
$\X_{55}$ &                   &        10   & 216  &  & NO &\cref{Phyperelliptic}\\
$\X_{56}$ &                   &        25   &  576 &  & NO &\cref{Ctree}\\
$\X_{57}$ &                   &        50   &  1176 & & NO &\cref{Phyperelliptic}\\
  \hline
$\X_{58}$ &      \multirow{7}{*}{$A_5$}                  &        2   & 20  &  \multirow{7}{*}{$s_5t_5$}& NO &\cref{Phyperelliptic}\\
$\X_{59}$ &                   &        3   &  40 &  & NO &\cref{PBadReductionCurves}
\\
$\X_{60}$ &                   &        6   &  100 &  & NO &\cref{Phyperelliptic}\\
$\X_{61}$ &                   &        7   &  120 &  & NO &\cref{PBadReductionCurves}
\\
$\X_{62}$ &                   &        14   & 260  & & NO &\cref{Phyperelliptic}\\
$\X_{63}$ &                   &        21   &  400 &  & NO &\cref{Ctree}\\
$\X_{64}$ &                   &       42   &  820 & & NO &\cref{Phyperelliptic}\\
  \hline
$\X_{65}$ &      \multirow{3}{*}{$A_5$}                  &       2   &  30 &  \multirow{3}{*}{$r_5s_5t_5$}& NO &\cref{Phyperelliptic}\\
$\X_{66}$ &                   &        31   &  900 &  & NO
        &\cref{Pruleout} \\
$\X_{67}$ &                   &        62   &  1830 &  & NO &\cref{Phyperelliptic}\\
  \hline
\end{longtable}
\section{Positive CM Results}\label{Spositive}
In this section, we confirm that all superelliptic curves of separable
type with many automorphisms and reduced automorphism group $C_m$ or
$D_{2m}$ have CM, and we show that the curves in Table
\cref{Tcurvelist2} marked ``YES'' have CM as well.

\subsection{Quotients of Fermat curves}\label{Sfermatquotient}
A \emph{Fermat curve} is a projective curve with affine equation $x^a + y^a + z^a
= 0$ for some $a \in \nats$.   It is easy to show directly that any superelliptic curve $\X$  with
many automorphisms and reduced automorphism group $\ol{G} = C_m$ or
$\ol{G} = D_{2m}$ is
isomorphic to a quotient of a Fermat curve, and thus has CM:

\begin{lem}\label{Lfermat}
If an algebraic curve  $\X$ is the quotient of a Fermat curve then $\Jac (\X)$  has CM. 
\end{lem}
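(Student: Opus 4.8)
The plan is to reduce the statement to the classical fact that the Jacobian of a Fermat curve itself has CM, and then to invoke \cref{CCMquotient}. Write $F_a \colon x^a + y^a + z^a = 0$ for the Fermat curve of degree $a$, and suppose $\X = F_a/G$ for some finite subgroup $G \subseteq \Aut(F_a)$. The quotient map $\psi \colon F_a \to \X$ is a finite morphism of curves whose \emph{source} is $F_a$, so \cref{CCMquotient} applied to $\psi$ reduces everything to showing that $\Jac F_a$ has CM; the direction of the morphism is exactly the one the corollary requires.

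To prove that $\Jac F_a$ has CM, I would exploit the abelian automorphism group $A := \mu_a \times \mu_a \subseteq \Aut(F_a)$, acting by $(x:y:z) \mapsto (\zeta x : \eta y : z)$ for $(\zeta,\eta) \in \mu_a \times \mu_a$. In the affine chart $u = x/z$, $v = y/z$ (so that $u^a + v^a = -1$), a basis of $H^0(F_a, \Omega^1)$ is given by
\[
  \omega_{r,s} = u^{r-1} v^{s-a}\, du, \qquad r,s \geq 1, \quad r + s \leq a-1,
\]
of which there are exactly $(a-1)(a-2)/2 = g(F_a)$. Since $\eta^{-a} = 1$, each $\omega_{r,s}$ is an eigenvector on which $(\zeta,\eta)$ acts through the character $\chi_{r,s}\colon (\zeta,\eta) \mapsto \zeta^{r}\eta^{s}$. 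As the pairs $(r,s)$ are distinct residues in the allowed range, the characters $\chi_{r,s}$ are pairwise distinct, so the representation of $A$ on $H^0(F_a,\Omega^1)$ is multiplicity-free.

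From here the standard argument is to let $E \subseteq \End^0(\Jac F_a)$ be the image of the group algebra $\Q[A]$. As $A$ is abelian, $E$ is a commutative semisimple $\Q$-algebra. Its action on $H_1(F_a,\Q) \otimes \C \cong H^0(F_a,\Omega^1) \oplus \overline{H^0(F_a,\Omega^1)}$ has eigencharacters $\{\chi_{r,s}\} \cup \{\chi_{r,s}^{-1}\}$, and I would show these $2g$ characters are pairwise distinct: the two families are disjoint because $\chi_{r,s}^{-1} = \chi_{a-r,\,a-s}$ would require an index pair with $(a-r)+(a-s) = 2a-(r+s) \geq a+1 > a-1$, which never lies in the admissible range. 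Hence $H_1(F_a,\Q)$ is free of rank one over $E$, so $\dim_\Q E = 2g = 2\dim \Jac F_a$, which is precisely the CM condition (this is the classical theorem of Weil; see also \cite{shimura-tanyama}). The conclusion for $\X$ then follows from \cref{CCMquotient} as above.

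The main obstacle is exactly the verification that the image $E$ of $\Q[A]$ attains the full dimension $2g$ — the eigencharacter count. This reduces to the combinatorial check just indicated: that the $g$ characters $\chi_{r,s}$ occurring on holomorphic differentials, together with their inverses, form a multiplicity-free $2g$-element set, equivalently that no $\chi_{r,s}$ is self-conjugate and that the two index families are disjoint. Once this distinctness is established the CM conclusion for $\Jac F_a$ is automatic, and it descends to $\Jac \X$ through the embedding $\psi^*$ of \cref{CCMquotient}.
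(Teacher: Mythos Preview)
Your proof is correct and follows exactly the same strategy as the paper: cite (or in your case, supply) the classical fact that Fermat Jacobians have CM, then apply \cref{CCMquotient} to pass to the quotient. The paper simply invokes a reference for the Fermat CM fact, whereas you reprove it via the standard $\mu_a \times \mu_a$ eigencharacter count; that extra argument is sound (the disjointness of $\{\chi_{r,s}\}$ and $\{\chi_{r,s}^{-1}\}$ is exactly the combinatorial check you give), so nothing is missing.
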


\begin{proof}
It is well-known that Fermat curves have CM Jacobians, see, e.g., \cite[Ch.\
VI.1]{Schmidt}.  The lemma follows since any quotient of a curve with
CM Jacobian has CM Jacobian \cref{CCMquotient}.
\end{proof}

\begin{thm}\label{Cstreit}
Let $\X$ be a superelliptic curve  with many automorphisms. 

i)  If $\ol{N}$ is cyclic   then $\Jac (\X)$ has CM.

ii)  If $\ol{N}$ is   dihedral then $\Jac (\X)$ has CM. 

\end{thm}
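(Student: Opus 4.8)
The plan is to prove both parts at once by reducing to \cref{Lfermat}: I will show that every superelliptic curve with many automorphisms and cyclic or dihedral reduced automorphism group is a quotient of a Fermat curve. First I would unwind the hypotheses. Since $\X$ is superelliptic, its superelliptic group $H$ is normal in $G = \Aut(\X)$, so the normalizer $N$ of $H$ equals $G$; by \cref{lem-2-4} the quotient map $\X \to \X/G$ is then branched at exactly three points. Hence \cref{prop-1} applies, and parts (i) and (ii) of that proposition pin down the equation of $\X$: if $\ol N \cong C_m$ then $\X$ is $y^n = x^m+1$ or $y^n = x(x^m+1)$, and if $\ol N \cong D_{2m}$ then $\X$ is $y^n = x^{2m}-1$ or $y^n = x(x^{2m}-1)$. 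It therefore suffices to realize each of these four curves as a quotient of a Fermat curve and invoke \cref{Lfermat}.

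For the key step I would write down explicit quotient maps. Over $\C$ the smooth projective model of the affine curve $\{Y^{d} = X^{d}+1\}$ (resp.\ $\{Y^{d}=X^{d}-1\}$) is the Fermat curve $F_d$ of degree $d$, after rescaling the variables by suitable roots of unity. In the cyclic case I take $d = mn$ and use the maps $(X,Y) \mapsto (X^n, Y^m)$ and $(X,Y)\mapsto (X^n, XY^m)$ from $F_{mn}$; in the dihedral case I take $d = 2mn$ and use $(X,Y)\mapsto(X^n, Y^{2m})$ and $(X,Y)\mapsto(X^n, XY^{2m})$ from $F_{2mn}$. Each map is a dominant morphism between smooth projective curves, hence finite, and is in fact the quotient by a finite abelian group of diagonal automorphisms of the Fermat curve; the target equation is verified by substituting the Fermat relation. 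For instance, for the last map one computes $y^n = X^n Y^{2mn} = X^n(X^{2mn}-1) = x(x^{2m}-1)$, where $x = X^n$.

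With these covers in place, \cref{Lfermat} immediately yields that $\Jac(\X)$ has CM in all four cases, giving both (i) and (ii). (One could present the cyclic case more conceptually, noting that a central extension of a cyclic group by a cyclic group is abelian, so $N$ is cyclic and $\X \to \X/N$ is a cyclic cover of $\P^1$ branched at three points, i.e.\ a Kummer cover $v^{d} = x^{a}(1-x)^{b}$, which is visibly a Fermat quotient; but the explicit maps above are shorter and, crucially, also handle the dihedral case, where $N$ itself is nonabelian.)

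The point I expect to require the most care is the dihedral curve $y^n = x(x^{2m}-1)$. There $N$ is nonabelian, so no three-point abelian cover is available, and a naive pure-power substitution $x = X^b$ fails because the stray linear factor of $x$ breaks homogeneity. The resolution is the mixed map $y = XY^{2m}$ (and its cyclic analogue $y = XY^m$), which absorbs the extra factor while leaving the source a Fermat curve; checking that these maps are well defined, finite, and land in the correct curve is the only genuine computation, and it is short once the maps are written down correctly.
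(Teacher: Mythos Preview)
Your proposal is correct and follows essentially the same route as the paper: reduce via \cref{lem-2-4} and \cref{prop-1} to the four explicit equations, exhibit each as the image of a Fermat curve under an explicit monomial map, and invoke \cref{Lfermat}. The paper packages the cyclic and dihedral cases together by writing the equations as $y^n = x^r \pm 1$ or $y^n = x(x^r \pm 1)$ and uses the substitutions $x = v^n$, $y = u^r$ (resp.\ $y = u^r v$) on the Fermat curve $u^{rn} = v^{rn} \pm 1$, which is exactly your $(X,Y)\mapsto(X^n,Y^r)$ and $(X,Y)\mapsto(X^n,XY^r)$ with the roles of the variables swapped.
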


\begin{proof}
    First note that for any $a \in \mathbb{N}$, the smooth proper curve with affine equation $x^a = y^a \pm 1$ is isomorphic
to a Fermat curve over $\complex$.

If $\ol{G} \iso C_m$ or $\ol{G} \iso D_{2m}$, then by \cref{prop-1}(i) and (ii), $\X$ has (affine) equation $y^n = x^r \pm 1$ or $y^n = x(x^r \pm 1)$ for some $n$ and $r$.  In the
first case, $\X$ is clearly a quotient of the Fermat curve $\mc{Y}$ with affine equation $u^{rn} = v^{rn} \pm 1$  under the automorphism group generated by $u \mapsto \zeta_m u$ and $v  \mapsto \zeta_n v$.  In the second case, the quotient of the Fermat curve with affine equation $u^{rn} = v^{rn} \pm 1$ by the automorphism group generated by $(u,v) \mapsto (\zeta_{rn}^{-1}u, \zeta_n v)$ and
 $(u,v) \mapsto (\zeta_r u, v)$ is  $\X$, as we see by  setting $x = v^n$ and $y = u^rv$.  By \cref{Lfermat}, $\X$ has CM, proving part (i).
\end{proof}

\begin{rem}
In \cref{PCmD2m} below, we give another proof of \cref{Cstreit}.
\end{rem}

\subsection{Streit's criterion}\label{Sstreit}
Let $\X/\complex$ be a superelliptic curve , with $\sigma \in \Aut (\X)$ and  $\bar \sigma$  its image in $\ol{N}$. Let $\chi_\X(\sigma)$ be the character of $\sigma$  on $H^0(\X, \omega_{\X/\complex})$, and let $\Sym^2\chi_\X(\sigma)$ be the character of $\sigma$ on the $\Aut(\X)$-representation $\Sym^2H^0(\X, \omega_{\X/\complex})$.

\begin{lem}[Streit~\cite{streit-01}] \label{Streit}
$\Jac (\X)$ has CM if $\langle \Sym^2\chi_{\X}, \chi_{triv} \rangle = 0$.  
\end{lem}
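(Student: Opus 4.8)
The plan is to read the hypothesis as the statement that the polarized abelian variety $\Jac \X$ is rigid among all principally polarized abelian varieties carrying a compatible action of $G := \Aut(\X)$, and then to deduce CM from rigidity via the theory of Mumford--Tate groups. Write $V := H^0(\X, \omega_{\X/\complex})$, a $G$-representation with character $\chi_\X$, and recall the Hodge decomposition $H^1(\X,\C) = V \oplus \ol V$ with $\ol V = H^1(\X, \mathcal O_\X)$ carrying character $\ol{\chi_\X}$.

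First I would identify the tangent space to the Siegel moduli space $\mathcal A_g$ of principally polarized abelian varieties at $[\Jac \X]$ with $\Sym^2 H^1(\X, \mathcal O_\X) = \Sym^2 \ol V$, the symmetric homomorphisms permitted by the principal polarization. The induced $G$-action on this space has character $\Sym^2 \ol{\chi_\X}$, so the subspace of first-order deformations that preserve the $G$-action is its invariant part, of dimension
\[
\langle \Sym^2 \ol{\chi_\X}, \chi_{triv}\rangle = \langle \Sym^2 \chi_\X, \chi_{triv}\rangle ,
\]
the equality holding because each side is a non-negative integer and the two are complex conjugates (as $\chi_{triv}$ is real). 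Hence the hypothesis is equivalent to the vanishing of the space of $G$-equivariant infinitesimal deformations of $\Jac \X$.

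Next I would consider the locus $\mathcal S \subseteq \mathcal A_g$ of principally polarized abelian varieties admitting a $G$-action that induces the character $\chi_\X$ on holomorphic $1$-forms. This locus is cut out by requiring the Hodge structure to be $G$-invariant, so the Mumford--Tate group of any of its points is contained in the centralizer of the image of $\C[G]$ in $\End^0(\Jac\X)$; consequently $\mathcal S$ is a finite union of Shimura subvarieties of $\mathcal A_g$, and the dimension of the component through $[\Jac \X]$ equals the dimension of the $G$-invariant tangent space computed above. By the previous step this dimension is $0$, so $[\Jac \X]$ is an isolated, hence special, point of $\mathcal S$. Since a zero-dimensional Shimura subvariety consists of special points, and special points of $\mathcal A_g$ are exactly the CM points, we conclude that $\Jac \X$ has complex multiplication.

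The main obstacle is precisely the passage from rigidity to CM in the last step, which relies on the dictionary between $G$-invariant deformations, the dimension of the relevant Shimura subvariety, and the Mumford--Tate group. A more self-contained route would bypass Shimura varieties: one shows directly that the vanishing of $\langle \Sym^2\chi_\X, \chi_{triv}\rangle$ forces the Hodge group $\mathrm{Hg}(\Jac \X)$ to be a torus, using that $\langle \Sym^2\chi_\X, \chi_{triv}\rangle$ computes the dimension of the orbit of the Hodge cocharacter under the real points of the centralizer of $G$, and then invokes the standard equivalence between the Hodge group being a torus and $\Jac \X$ being of CM-type. Either way, the crux is to match the purely character-theoretic quantity $\langle \Sym^2\chi_\X, \chi_{triv}\rangle$ with the dimension of the algebraic-group orbit governing the variation of Hodge structure.
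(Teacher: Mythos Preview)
The paper does not give a proof of this lemma at all: it is stated with a bare citation to Streit's original paper \cite{streit-01} and then used as a black box. So there is nothing in the paper to compare your argument against.

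That said, your outline is essentially the standard argument behind Streit's criterion, and it is correct in its broad strokes. A few points of precision: the Mumford--Tate group is a $\Q$-algebraic subgroup of $GSp(H^1(\X,\Q))$, so it lies in the centralizer of the image of $\Q[G]$ (not $\C[G]$) there, rather than in $\End^0(\Jac\X)$. More substantively, you should be explicit that the locus $\mathcal S$ you describe is of PEL type --- one fixes not merely the character $\chi_\X$ but the full data of the $\Q[G]$-module $H^1(\X,\Q)$ together with its polarization form --- since that is what guarantees $\mathcal S$ is (a union of) Shimura subvarieties whose tangent space at $[\Jac\X]$ is exactly $(\Sym^2 \ol V)^G$. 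With those clarifications, the chain ``$\langle \Sym^2\chi_\X,\chi_{triv}\rangle = 0 \Rightarrow \dim \mathcal S = 0 \Rightarrow [\Jac\X]$ is a special point $\Rightarrow$ CM'' is the intended one, and your alternative route via the Hodge group being a torus is equivalent.
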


\begin{rem}\label{Ranygroup}
If $H$ is any subgroup of $\Aut(\X)$, it suffices to verify Streit's criterion considering $\Sym^2H^0(\X, \omega_{\X/\complex})$ as an $H$-representation.  This is because $\langle \Sym^2\chi_{\X}, \chi_{triv} \rangle_H = 0$ implies that $\langle \Sym^2\chi_{\X}, \chi_{triv} \rangle_G = 0$, since the former means that $\Sym^2H^0(\X,
\omega_{\X/\complex})$ has no $H$-invariant vectors whereas the latter means it has no $G$-invariant vectors.
\end{rem}

To calculate $\Sym^2\chi_{\X}$, we record the following lemma.
\begin{lem}\label{Lsymsquare}
We have
\[ 
\sum_{\sigma \mapsto \ol{\sigma}} \Sym^2\chi_\X(\sigma) =  \frac{1}{2} \left(\sum_{\sigma \mapsto \ol{\sigma}} \chi_\X(\sigma^2) +    \sum_{\sigma \mapsto \ol{\sigma}} \chi_\X(\sigma)^2 \right).
\]
\end{lem}
 
\begin{proof}
  This is a basic result of representation theory.
\end{proof}

We now have the following:

\begin{prop}\label{Pstreit}
Let $\X:  y^n = f(x)$ be a smooth superelliptic curve  defined over $\complex$.  
For each $\ol{\sigma} \in \ol{\Aut} (\X)$, let $m$ be its order, and
let $\zeta_{\ol{\sigma}}$ be either
ratio of the eigenvalues when  $\ol{\sigma}$ is thought of as an element of
$PGL_2(\complex)$ ($\zeta_{\ol{\sigma}}$ is a primitive $m$th root of unity). Define
\[
k_{\ol{\sigma}} =
\left\{
\begin{split}
& 1     \text{  if   }    \ol{\sigma}  \; \text{ fixes a branch point
  of   }  \, \X \to  \proj^1 \\
 &  0 \quad \text{ otherwise.}  \\
\end{split} 
\right.
\]
Let $\zeta_n$, be a primitive $n$th root of unity, and let $\zeta_{n,
  \ol{\sigma}}$ be a primitive $mn$th root of unity such that
$\zeta_{n, \ol{\sigma}}^n = \zeta_{\ol{\sigma}}$.  Let $A$ be the set
of ordered pairs defined below in \cref{Eholforms}.  If
\[
\sum_{\ol{\sigma} \in \ol{\Aut} (\X)} \sum_{i=0}^{n-1}    \left(
  \sum_{(a,b) \in A} \zeta_{\ol{\sigma}}^{2(a+1)} \zeta_n^{2(b+1)i}
  \zeta_{n, \ol{\sigma}}^{2(b-n+1)k_{\ol{\sigma}}} +
  \left(\sum_{(a,b) \in A}   \zeta_{\ol{\sigma}}^{a+1}  \zeta_n^{(b+1)i}
     \zeta_{n, \ol{\sigma}}^{(b-n+1)k_{\ol{\sigma}}}
  \right)^2 \right)
\]
vanishes, then $\X$ has CM.
\end{prop}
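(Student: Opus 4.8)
The plan is to prove the identity
\[
\Sigma = 2\,|G|\,\langle \Sym^2\chi_\X, \chi_{triv}\rangle,
\]
where $\Sigma$ denotes the displayed sum and $G=\Aut(\X)$; once this is established, Streit's criterion (\cref{Streit}) shows that the vanishing of $\Sigma$ forces $\Jac\X$ to have CM, which is the assertion. Since $\X$ is superelliptic, $H=\langle\tau\rangle$ is normal in $G$ and $\ol{\Aut}(\X)=G/H$, so each $\ol\sigma\in\ol{\Aut}(\X)$ has exactly the $n$ preimages $\tau^i\sigma$, $i=0,\dots,n-1$, for any fixed lift $\sigma$. Writing $\langle \Sym^2\chi_\X,\chi_{triv}\rangle=\tfrac1{|G|}\sum_{\rho\in G}\Sym^2\chi_\X(\rho)$ and grouping $\rho$ over the fibers of $G\to\ol{\Aut}(\X)$ produces precisely the outer double sum $\sum_{\ol\sigma}\sum_{i=0}^{n-1}$ appearing in the statement.

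The second step is the eigenvalue bookkeeping. By \cref{Eholforms}, $H^0(\X,\omega_\X)$ has the monomial basis $\omega_{a,b}=x^a y^{b+1-n}\,dx$ with $(a,b)\in A$, and since $\tau\colon y\mapsto\zeta_n y$ we get $\tau^*\omega_{a,b}=\zeta_n^{\,b+1}\omega_{a,b}$. Fixing $\ol\sigma$ of order $m$, I would compute $\chi_\X(\sigma)$, a conjugation invariant and hence computable after any change of coordinates, by passing to coordinates in which $\ol\sigma$ is the diagonal map $x\mapsto\zeta_{\ol\sigma}x$ with fixed points $0,\infty$. If $k_{\ol\sigma}=0$, then $\ol\sigma$ fixes no branch point of $\X\to\proj^1$, so it permutes the branch points in free orbits of size $m$; combined with \cref{Pnvalues} (the number of branch points is divisible by $n$) this gives $m\mid\deg f$ and $f(\zeta_{\ol\sigma}x)=f(x)$, so the lift $\sigma\colon(x,y)\mapsto(\zeta_{\ol\sigma}x,y)$ acts on $\omega_{a,b}$ by $\zeta_{\ol\sigma}^{\,a+1}$. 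If $k_{\ol\sigma}=1$, I place a fixed branch point at $\infty$; the cover is totally ramified there, and the local monodromy relating the order-$m$ rotation downstairs to an order-$mn$ rotation upstairs shows that a suitable lift acts on $y$ through the finer root $\zeta_{n,\ol\sigma}$, giving eigenvalue $\zeta_{\ol\sigma}^{\,a+1}\zeta_{n,\ol\sigma}^{\,b-n+1}$. Both cases are uniformly $\zeta_{\ol\sigma}^{\,a+1}\zeta_{n,\ol\sigma}^{\,(b-n+1)k_{\ol\sigma}}$.

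The third step assembles these. Because $H$ is central in $G$ (\cref{Pnormalcentral}), the lifts of $\ol\sigma$ are the $\tau^i\sigma$ with $(\tau^i\sigma)^2=\tau^{2i}\sigma^2$, so their eigenvalues on $\omega_{a,b}$ are products of the $\tau^i$- and $\sigma$-eigenvalues. Summing over $(a,b)\in A$ yields
\[
\chi_\X(\tau^i\sigma)=\sum_{(a,b)\in A}\zeta_{\ol\sigma}^{\,a+1}\zeta_n^{\,(b+1)i}\zeta_{n,\ol\sigma}^{\,(b-n+1)k_{\ol\sigma}},\qquad
\chi_\X\!\big((\tau^i\sigma)^2\big)=\sum_{(a,b)\in A}\zeta_{\ol\sigma}^{\,2(a+1)}\zeta_n^{\,2(b+1)i}\zeta_{n,\ol\sigma}^{\,2(b-n+1)k_{\ol\sigma}},
\]
which are exactly the squared term and the first term of the inner summand of $\Sigma$. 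Applying \cref{Lsymsquare} fiberwise gives $\sum_i\Sym^2\chi_\X(\tau^i\sigma)=\tfrac12\big(\sum_i\chi_\X((\tau^i\sigma)^2)+\sum_i\chi_\X(\tau^i\sigma)^2\big)$, and summing over $\ol\sigma$ shows $\Sigma=2\sum_{\rho\in G}\Sym^2\chi_\X(\rho)=2\,|G|\,\langle\Sym^2\chi_\X,\chi_{triv}\rangle$, so $\Sigma=0$ iff Streit's criterion holds.

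The main obstacle is the eigenvalue computation of the second step, not the formal representation theory of the third. Concretely, the work lies in justifying the twist $\zeta_{n,\ol\sigma}^{\,b-n+1}$ at a fixed branch point via the local monodromy argument, in selecting the lift $\sigma$ consistently so that this exponent (rather than some other power of $\zeta_{n,\ol\sigma}$) appears, and in verifying that the single index set $A$ of \cref{Eholforms} legitimately describes the holomorphic differentials in \emph{each} of the coordinate systems adapted to the various $\ol\sigma$, even though the explicit descriptions for $k_{\ol\sigma}=0$ and $k_{\ol\sigma}=1$ treat the point $\infty$ differently. Checking that these local contributions are independent of the chosen lift and collapse into the clean uniform exponent $\zeta_{\ol\sigma}^{\,a+1}\zeta_{n,\ol\sigma}^{\,(b-n+1)k_{\ol\sigma}}$ is the delicate part; everything after it is bookkeeping.
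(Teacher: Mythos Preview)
Your proposal is correct and follows essentially the same route as the paper: fix $\ol\sigma$, change coordinates so its fixed points are $0$ and $\infty$, diagonalize the lifts on the monomial basis $x^a y^{b-n+1}\,dx$, sum the eigenvalues, and feed the result into \cref{Lsymsquare} and \cref{Streit}. The one tactical difference is in how you handle the case $k_{\ol\sigma}=1$. You propose placing the fixed branch point at $\infty$ and extracting the $\zeta_{n,\ol\sigma}$-twist from a local monodromy argument at the totally ramified point; the paper instead places that branch point at $0$, so that the equation becomes $y^n = x^{k_{\ol\sigma}} h(x^m)$ in the new coordinates. From this normal form one reads off immediately that $\sigma_i(y)=\zeta_n^i\,\zeta_{n,\ol\sigma}^{k_{\ol\sigma}}\,y$, since $(\zeta_{\ol\sigma}x)^{k_{\ol\sigma}} h((\zeta_{\ol\sigma}x)^m)=\zeta_{\ol\sigma}^{k_{\ol\sigma}} y^n$ and $\zeta_{n,\ol\sigma}^n=\zeta_{\ol\sigma}$. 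This sidesteps precisely the ``delicate part'' you flag (the local monodromy justification and the consistent choice of lift), and it also makes your concern about the index set $A$ largely evaporate: in the paper's coordinates the equation is again of the form $y^n=(\text{polynomial in }x)$, so the Towse basis applies verbatim with the same $(n,d,g)$ and hence the same $A$. Your version works, but the paper's normal form at $0$ is the cleaner bookkeeping device.
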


\proof 
Suppose $\X$ has
genus $g$ and $\deg(f) = d$.  From \cite{towse}, 
a basis for the space of holomorphic differentials on $\X$ is given by
\[
  x^a y^b    \left( \frac{dx}{y^{n-1}} \right),
\]
as $(a,b)$ ranges through the set
\begin{equation}\label{Eholforms}
A := \{(a, b) \in \ints^2 \mid a \geq 0, 0 \leq b < n, 0 \leq   an+bd
\leq 2g-2\}.
\end{equation}
By the Hurwitz formula, $2g - 2 = -n - \gcd(n,d) + d(n-1)$, so we can also write $A$ as
\begin{equation}\label{Edifferentialbasis}
  \{(a, b) \in \ints^2 \mid  0 \leq b < n, 0 \leq
    a \leq d-1 - \frac{d(1+b) + \gcd(n,d)}{n}\}.
\end{equation}
Let $\sigma \in \Aut \X$,  $\ol{\sigma}$ its image in  $\ol{\Aut} (\X)$, and  $m$ be the order of $\ol{\sigma}$. 
If $\tau$ is the superelliptic automorphism of $\X$ and  $H  = \langle
\tau \rangle$, then $\sigma H$ is a coset consisting of $n$ different
automorphisms of $\X$, say $\sigma = \sigma_1, \sigma_2,\ldots,
\sigma_n$, all projecting to  $\ol{\sigma} \in \ol{\Aut} (\X) \iso
\Aut(\proj^1)$. Now, $\ol{\sigma}$ acts on $\proj^1$ with two fixed
points, and after a change of coordinate we  may assume that they are
$0$ and $\infty$.  After possibly replacing $x$ by $1/x$, we may assume that $\ol{\sigma}$ acts on the
coordinate $x$ via $x \mapsto \zeta_{\ol{\sigma}} x$.  After this
change of variables, there is a polynomial $h \in k[x]$ such that the
equation for $\X$ is given by $y^n = x^{k_{\ol{\sigma}}}   h(x^m)$,
where $k_{\ol{\sigma}} = 1$ if the fixed point $0$ of $\ol{\sigma}$ is
a ramification point of $X \to \proj^1$, and $k_{\ol{\sigma}} = 0$
otherwise, as in the statement of the proposition.  

Fix a primitive $n$-th root of unity $\zeta_n$, as well as a
primitive $mn$-th root $\zeta_{n, \ol{\sigma}}$ as in the statement of
the proposition, so $\zeta_{n, \ol{\sigma}}^n  = \zeta_m$.  Since $\ol{\sigma}(x) = \zeta_{\ol{\sigma} }x$ and $y^n = x^{k_{\ol{\sigma}} }h(x^m)$, we  have that 
\[ 
\sigma (y) = \zeta_n^i \zeta_{n, \ol{\sigma}}^{k_{\ol{\sigma} }} y,
\]
for some $i \in \{1,  \ldots, n\}$.  After reordering $\sigma_1,
\ldots, \sigma_n$, we may  assume that $\sigma_i(y) = \zeta_n^i
\zeta_{n, \ol{\sigma}}^{k_{\ol{\sigma}} } y$ for each $i$.  In  particular,  $x^a y^b \frac {dx} {y^{n-1}}$ is an eigenvector for every $\sigma_i$.  Its eigenvalue is
\[
\zeta_{\ol{\sigma}}^{a +    1}\zeta_n^{i(b-n+1)}\zeta_{n,
  \ol{\sigma}}^{(b-n+1)k_{\ol{\sigma}} } = \zeta_{\ol{\sigma}}^{a +
  1}\zeta_n^{i(b+1)}\zeta_{n, \ol{\sigma}}^{(b-n+1)k_{\ol{\sigma} }}
\]    
We thus have
\begin{equation}\label{Echi}
 \sum_{\sigma \mapsto \ol{\sigma}} \chi_X(\sigma) = \sum_{i=0}^{n-1}
 \sum_{(a,b) \in A} \zeta_{\ol{\sigma} }^{a+1} \zeta_n^{(b+1)i}
 \zeta_{n, \ol{\sigma}}^{(b-n+1)k_{\ol{\sigma} }}.
\end{equation}
Likewise,
  \begin{equation}\label{Echisigma2}
    \sum_{\sigma \mapsto \ol{\sigma}} \chi_X(\sigma^2) = \sum_{i=0}^{n-1}
    \sum_{(a,b) \in A} \zeta_{\ol{\sigma}}^{2(a+1)} \zeta_n^{2(b+1)i}
    \zeta_{n, \ol{\sigma}}^{2(b-n+1)k_{\ol{\sigma} }}.
  \end{equation}
Also,
\begin{equation}\label{Echi2sigma}
\sum_{\sigma \mapsto \ol{\sigma}} \chi_X(\sigma)^2 = \sum_{i=0}^{n-1}
\left(\sum_{(a,b) \in A}   \zeta_{\ol{\sigma} }^{a+1}
  \zeta_n^{(b+1)i}        \zeta_{n, \ol{\sigma}}^{(b-n+1)k_{\ol{\sigma}}}\right)^2.
\end{equation} 
By \cref{Lsymsquare}, $\Sym^2\chi_\X(\sigma)  = \frac{1}{2}(\chi_\X(\sigma^2) + \chi_\X(\sigma)^2)$.  Combining this with \cref{Echisigma2} and \cref{Echi2sigma}, we have 
\[ 
\begin{split}
\sum_{ \sigma \mapsto \ol {\sigma}}   \Sym^2 \chi_\X(\sigma)   =
\frac{1}{2}    &     \left( \sum_{i=0}^{n-1}    \left( \sum_{(a,b) \in
      A} \zeta_{\ol{\sigma}}^{2(a+1)} \zeta_n^{2(b+1)i}    \zeta_{n, \ol{\sigma}}^{2(b-n+1)k_{\ol{\sigma}}}  +  \right. \right. \\
 & \left. \left. +       \left(\sum_{(a,b) \in A}
       \zeta_{\ol{\sigma}}^{a+1}  \zeta_n^{(b+1)i}        \zeta_{n, \ol{\sigma}}^{(b-n+1)k_{\ol{\sigma}}}\right)^2 \right)\right).  \\
\end{split} 
\]
Combining the above with \cref{Streit} we claim the result. 

\qed

\begin{prop}\label{Pstreitexamples}
The curves $\X_0$, $\X_1$, $\X_4$, $\X_{12}$, $\X_{13}$, $\X_{14}$, $\X_{20}$,
$\X_{37}$, $\X_{41}$ all have CM.
\end{prop}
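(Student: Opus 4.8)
The plan is to prove CM for each of the nine curves by verifying the sufficient condition of \cref{Pstreit}: for each curve I would show that the double sum appearing in that proposition vanishes, whence $\X$ satisfies Streit's criterion and $\Jac(\X)$ has CM by \cref{Streit}. All of the relevant data, namely $n$, the reduced automorphism group $\ol{N} \in \{A_4, S_4, A_5\}$, and the polynomial $f(x)$, is recorded in \cref{Tcurvelist2}, so for each curve the sum is a completely explicit finite sum of roots of unity.

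First I would assemble the three inputs to the formula. The index set $A$ of \cref{Edifferentialbasis} is determined by $n$ and $d = \deg f$, hence is an explicit finite set of pairs $(a,b)$. Next, since the summand over $\ol{\sigma}$ depends only on the conjugacy class of $\ol{\sigma}$ in $\ol{N}$ (through its order, its eigenvalue ratio, and $k_{\ol{\sigma}}$), I would run through the conjugacy classes of $A_4$, $S_4$, and $A_5$, recording for each the order $m$, the eigenvalue ratio $\zeta_{\ol{\sigma}}$ (a primitive $m$-th root of unity, well defined up to inverse), and a compatible $\zeta_{n,\ol{\sigma}}$ with $\zeta_{n,\ol{\sigma}}^{n} = \zeta_{\ol{\sigma}}$; each class is then weighted by its cardinality.

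The one genuinely geometric input is the value $k_{\ol{\sigma}}$. For $\ol{\sigma} \neq 1$ its two fixed points on $\proj^1_x$ lie in the special orbits of $\ol{N}$, and the orbit containing a given fixed point is pinned down by the order of the point's cyclic stabilizer, equivalently by the ramification index of the corresponding branch point of $\phi : \proj^1_x \to \proj^1_z$. By \cref{prop-1}, in the $S_4$ case the fibers of index $4,3,2$ are the root sets of $t_4, s_4, r_4$; in the $A_5$ case the fibers of index $5,3,2$ are the root sets of $s_5, r_5, t_5$; and in the $A_4$ case the index-$2$ fiber is the root set of $t_4$ while the two index-$3$ fibers are the roots of $\gamma_2$ and $\gamma_3$ (and one must remember that $\infty$ is a branch point exactly when $t_4 \mid f$ or $s_5 \mid f$, as in \cref{Pnvalues}). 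I would therefore set $k_{\ol{\sigma}} = 1$ precisely when one of the two fixed points of $\ol{\sigma}$ is a root of $f$ (or is $\infty$ in the branched case), reading this off from the orbit type of each conjugacy class together with which of $r, s, t$ (or $\gamma_2$) divide $f$ for the curve at hand.

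With $A$, the class data, and the $k_{\ol{\sigma}}$ in hand, the final step is to substitute into the double sum of \cref{Pstreit} and check that it equals $0$; this is the mechanical Sage/GAP verification the paper refers to. I expect the main obstacle to be the bookkeeping in the previous step rather than the arithmetic: one must correctly attach each conjugacy class of $\ol{N}$ to the orbit(s) its fixed points occupy, taking care that $\ol{N}$ can have distinct involution classes sitting on different orbits (the vertex- and edge-involutions of $S_4$) and that an element of order $3$ in $A_4$ fixes one point on each index-$3$ orbit, so that the choice of which fixed point to normalize to $0$ in the derivation underlying \cref{Pstreit} must be made consistently with the definition of $k_{\ol{\sigma}}$. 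Once these assignments are fixed, the vanishing of the sum for each of the nine curves is a finite check.
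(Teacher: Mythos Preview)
Your proposal is correct and follows essentially the same route as the paper: the paper's proof simply invokes a GAP program that computes the sum of \cref{Pstreit} from the data $n$, $\ol{N}$, and $f(x)$, using the explicit embedding of $\ol{N}$ in $PGL_2(\complex)$ from \cref{prop-1} to determine $k_{\ol{\sigma}}$, and reports that the sum vanishes for each of the nine curves. Your write-up is a faithful description of what that program does, and you have correctly flagged the only delicate point, namely matching each conjugacy class of $\ol{N}$ to the orbit(s) carrying its fixed points so that $k_{\ol{\sigma}}$ is assigned correctly.
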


\begin{proof}
The GAP program \texttt{streit\_program.gap}\footnote{available as
  part of the arxiv posting for this paper} computes the sum in
\cref{Pstreit} for any superelliptic curve , presented as in \cref{prop-1}, with $n$, $\ol{\Aut} (\X)$, and $f(x)$ as inputs.  The program is modelled on that of Pink and M\"{u}ller used in \cite{muller-pink}.  To calculate $k_{\ol{\sigma}}$, we use the embedding of $\ol{G} = \ol{\Aut} (\X)$ into $PGL_2(\complex)$ from
\cref{prop-1} and its proof.  The rest of the calculation is straightforward.  For all of the curves in the proposition, the sum  in \cref{Pstreit} comes to $0$.
\end{proof}

\begin{rem}
The curves $\X_1$, $\X_4$, $\X_{12}$, $\X_{20}$, and $\X_{41}$ are  all hyperelliptic, and Streit's criterion was already verified for  them in \cite{muller-pink}.  These correspond to $X_4$, $X_7$,  $X_5$, $X_9$, and $X_{14}$ respectively in that paper.
\end{rem}

\begin{prop}\label{PCmD2m}
Let $\X$ be a superelliptic curve  with many automorphisms, and assume that $\ol{\Aut(\X)} = C_m$ or $D_{2m}$.  Then $\X$ satisfies Streit's criterion.  That is, $\langle \Sym^2_{\chi_{\X}}, \chi_{triv} \rangle = 0$.
\end{prop}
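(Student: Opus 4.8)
The plan is to establish Streit's criterion directly, i.e.\ to show that $\Sym^2 H^0(\X,\omega_\X)$ contains no nonzero $N$-invariant vector, where $N=\Aut(\X)$; by \cref{Streit} (and as recorded in \cref{Pstreit}) this is exactly $\langle \Sym^2\chi_\X,\chi_{triv}\rangle=0$. I would work throughout with the monomial eigenbasis $\omega_{a,b}=x^a y^b\,dx/y^{n-1}$, $(a,b)\in A$, of \cref{Pstreit}, and with the explicit equations from \cref{prop-1}: $y^n=x^m\pm1$ or $y^n=x(x^m\pm1)$ when $\ol{N}\iso C_m$, and $y^n=x^{2m}-1$ or $y^n=x(x^{2m}-1)$ when $\ol{N}\iso D_{2m}$.

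For $\ol{N}\iso C_m$ the group $N$ is abelian: by \cref{Pnormalcentral} the superelliptic automorphism $\tau$ is central, and $N$ is generated by $\tau$ together with a lift $\delta$ of the rotation $x\mapsto\zeta_m x$. Hence each $\omega_{a,b}$ is a simultaneous eigenvector and the products $\omega_{a,b}\omega_{a',b'}$ form an eigenbasis of $\Sym^2 H^0$. Reading off eigenvalues ($\tau$ scales $\omega_{a,b}$ by $\zeta_n^{b+1}$ and $\delta$ by $\zeta_{\ol\delta}^{\,a+1}$ times the root-of-unity factor coming from $\delta(y)$), invariance of a product forces two congruences, $b+b'+2\equiv0\pmod n$ and an accompanying condition $a+a'\equiv c\pmod m$ (with $c=-2$, resp.\ $c=-\beta$ where $n\beta=b+b'+2$, according to whether $0$ is a ramification point). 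The crux is then a weight estimate: these congruences let one write the total weight $w=(a+a')n+(b+b')d$ of the product as a multiple $mn\,s$ of $mn$, with integer $s\geq2$, minus a bounded correction, whereas the defining inequality $an+bd\leq 2g-2$ of $A$ gives $w\leq 2(2g-2)$. Substituting the Hurwitz identity $2g-2=-n-\gcd(n,d)+d(n-1)$ turns the comparison into $mn\,s+2\gcd(n,d)\leq 2mn$ with $s\geq2$, which is impossible. Thus no product is $N$-invariant, settling the cyclic case completely.

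For $\ol{N}\iso D_{2m}$ the normalizer is nonabelian, so I would first run the above analysis on the index-two abelian rotation subgroup $N_0=\langle\tau,\delta\rangle$ lifting $C_m\subset D_{2m}$. The same weight computation no longer closes off entirely, but it does confine the possible $N_0$-invariant products to the single layer $a+a'=m-2$, $b+b'=n-2$ (that is, $\beta=1$). I then bring in the reflection $\iota$, with $\iota\delta\iota^{-1}=\delta^{-1}$, acting by $x\mapsto 1/x$ and on $y$ through a relation such as $\iota(y)^n=-\,y^n/x^{2m}$; computing its effect on the basis shows that $\iota$ multiplies every product in this layer by $\zeta^{\,b+b'+2}=(\zeta^n)^{\beta}=(-1)^\beta=-1$, followed at most by a permutation of the products within the layer. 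Hence $\iota$ acts as $-1$ times a permutation on the space of $N_0$-invariants, and such an operator has a nonzero fixed vector only if that permutation has a cycle of even length. The remaining task is to check that for curves whose reduced group is genuinely $D_{2m}$ the relevant index set of the layer is symmetric about its centre with no off-diagonal $\iota$-orbit, so the permutation is trivial, $\iota$ acts as $-1$, and no $N$-invariant survives.

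The hard part is exactly this last point. The rotation subgroup alone can carry invariants, so everything hinges on showing the reflection removes them \emph{without} producing an $\iota$-fixed combination $P_1-P_2$ of two products it swaps. This is where the hypothesis $\ol{\Aut(\X)}=D_{2m}$ (rather than a strictly larger dihedral group) must be used: an extra rotation would simultaneously enlarge the group and be precisely what is needed to obstruct such a combination, so one expects that for genuinely $D_{2m}$ curves the surviving index pairs are forced to be centred and the swapping permutation is trivial. Carefully tracking the signs in the $\iota$-action together with which pairs $(a,b),(a',b')$ survive the weight bound is the delicate step, and I would organize the dihedral argument so that this reduces to a short, explicit check on the layer $a+a'=m-2$, $b+b'=n-2$.
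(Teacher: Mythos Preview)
Your overall strategy coincides with the paper's: use the monomial eigenbasis $\omega_{a,b}=x^ay^b\,dx/y^{n-1}$, derive the congruences on $(a+a',b+b')$ from invariance under the abelian subgroup, and invoke the bound coming from $(a,b)\in A$. For the cyclic case your argument is essentially identical to the paper's, which also reduces to showing $0<a_1+a_2+2-k<m$ (with $k\in\{0,1\}$ according to whether $0$ is a branch point), so that the congruence $m\mid a_1+a_2+2-k$ has no solution.

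The dihedral case is where your proposal is genuinely incomplete. You correctly isolate the layer $b_1+b_2=n-2$, $a_1+a_2=m+k-2$ of $N_0$-invariants and observe that the reflection acts there as $-1$ times an involutive permutation $P$; but you do not verify that $P$ is trivial, and you explicitly flag this as ``the hard part''. The paper does not attempt to track $P$ at all. Instead it observes that every product $\omega_{a_1,b_1}\omega_{a_2,b_2}$ in this layer, written out as a quadratic differential, is the single element
\[
\omega \;=\; x^{a_1+a_2}y^{b_1+b_2-2n+2}(dx)^2 \;=\; x^{m+k-2}y^{-n}(dx)^2 \;=\; \frac{x^m}{x^{2m}-1}\left(\frac{dx}{x}\right)^2,
\]
and then checks directly that $\rho(\omega)=-\omega$, which finishes the argument. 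Your caution here is not unreasonable: distinct products with the same sums $(a_1+a_2,b_1+b_2)$ are linearly independent in $\Sym^2 H^0(\X,\omega_\X)$ even though they coincide as elements of $H^0(\X,\omega_\X^{\otimes 2})$, so the paper is implicitly identifying the $H$-invariant subspace with its image under the multiplication map. If you want to avoid that identification, the cleanest fix is to note that for the curve $y^n=x^k(x^{2m}-1)$ the reduced automorphism group actually contains the rotation $x\mapsto\zeta_{2m}x$ of order $2m$; rerunning your cyclic argument with this larger rotation gives $0<a_1+a_2+2-k<2m$ directly, so the invariant layer is already empty and the reflection is never needed.
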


\begin{proof}
Let $V = \Sym^2(H^0(\X, \omega_{\X/\complex}))$, and let $G = \Aut(\X)$.  It suffices to show that no non-trivial points of $V$ are fixed by $\Aut(\X)$.

Let us first assume that $\ol{\Aut(\X)} = C_m$.  By \cref{prop-1}, the affine
equation of $\X$ is $y^n = x^k(x^m + 1)$, where $k \in \{0,1\}$.  The set
$\{v_{a, b} := x^ay^bdx/y^{n-1} \mid (a, b) \in A\}$ is a basis of simultaneous eigenvectors for the
action of $G$ on $H^0(\X, \omega_{\X/\complex})$, where $A$ is as in \cref{Eholforms}.  For
each $v_{a,b}$ in this basis, let $\lambda_{a,b}(g)$ be its eigenvalue
under the action of $g \in G$.  It suffices to prove that there is no
set $\{(a_1, b_1), (a_2, b_2)\}$ of indices such that $\lambda_{a_1,
  b_1}(g)\lambda_{a_2, b_2}(g)$ takes the
constant value $1$ on $G$.

Suppose $\sigma(x,y) = (\zeta_mx,\zeta_{mn}^ky)$, and  $\tau(x,y) = (x,
\zeta_n y)$, where $\zeta_{mn}$ is a primitive $mn$th root
of unity with $\zeta_{mn}^n = \zeta_m$.  The elements $\sigma$ and $\tau$ generate $G$.  Then
$$\lambda_{a_1,b_1}(\sigma^i\tau^j) \lambda_{a_2,b_2}(\sigma^i\tau^j) = \zeta_m^{i(a_1+a_2 + 2)}\zeta_n^{j(b_1 + b_2 + 2 - 2n)}\zeta_{mn}^{ik(b_1 + b_2 + 2 - 2n)}.$$
This is independent of $j$ only if $b_1 + b_2 + 2 = n$, in which case
it equals
$$\zeta_m^{i(a_1+a_2 + 2 - k)}.$$
By \cref{Edifferentialbasis}, we have
\begin{equation}\label{Eabound}
a_1 + a_2 + 2 \leq 2d  - \frac{d(b_1 + b_2 + 2) + \gcd(n, d)}{n} =
d - \frac{\gcd(n,d)}{n},
\end{equation}
where $d = m + k$.  So $0 < a_1 + a_2 + 2 - k < m$, which means that
$\zeta_m^{i(a_1+a_2 + 2 - k)}$ is not
independent of $i$.  Thus the eigenvalue cannot take the constant value $1$ on
$G$.

Now, assume $\ol{\Aut(\X)} = D_{2m}$.  By \cref{prop-1}, the affine equation
of $\X$ is $y^n = x^k(x^{2m}-1)$, where $k \in \{0,1\}$.  The
automorphism group $G$ of $X$ is generated by 
$$\sigma(x,y) = (\zeta_mx, \zeta_{mn}^ky),\ \  \tau(x,y) = (x,
\zeta_ny),\ \ \rho(x,y) = (1/x, \zeta_{2n}y/x^{2(m+k)/n}),$$ where
$\zeta_{2n}$ is any $2n$th root of unity and $\zeta_m$, $\zeta_n$, and
$\zeta_{mn}$ are as before.  Let $H$ be
the index two subgroup of $G$ generated by $\sigma$ and $\tau$.   
The same $v_{a,b}$ as in the $C_m$ case form a basis of simultaneous
eigenvectors for the action of $H$ on $H^0(\X,
\omega_{\X/\complex})$.  If we again set $\lambda_{a,b}$ to be the
respective eigenvalues, then exactly as in the $C_m$ case,
$\lambda_{a_1, b_1}(h)\lambda_{a_2,b_2}(h)$ takes the constant value
$1$ as $h$ ranges over $H$ only if $b_1 + b_2 + 2 = n$ and $a_1 + a_2
+ 2 - k$ is divisible by $m$.  Furthermore, since $d = 2m + k$, we
know from \cref{Eabound} that $a_1 + a_2 + 2 - k$ is divisible by $m$ only if $a_1 + a_2
+2 - k = m$.

The only eigenvector with eigenvalue $1$ for the action of $H$ on $\Sym^2 (H^0 (\X, \omega_{\X/\complex}))$ is
\begin{align*}
  \omega := & x^{a_1+a_2}y^{b_1 +b_2 - 2n           +2}(dx)^2          = x^{m+k-2}y^{-n}(dx)^2 \\
         &= \frac{x^{m+k-2}}{x^k(x^{2m}-1)} (dx)^2 = \frac{x^m}{x^{2m}-1}\left(\frac{dx}{x}\right)^2.
\end{align*}
One sees immediately that $\rho(\omega) = - \omega$, so $\omega$ is
not fixed under $G$, which completes the proof.
\end{proof}

Note that \cref{PCmD2m} gives another proof of \cref{Cstreit}.

\section{Negative CM results}\label{Snegative}

In this section, we show that the remaining curves in \cref{Tcurvelist2} do not have CM.

\subsection{Bootstrapping the hyperelliptic  case}\label{Shyperelliptic}
The following proposition is a direct consequence of the main result of \cite{muller-pink}.

\begin{prop}\label{Phyperelliptic}
None of the curves $\X_i$ in \cref{Tcurvelist2} for
\begin{equation*}
    \begin{split}
      i \in \{9, 10, 11, 15, 16, 18, 19, 25, 27, 28, 30, 31, 32, 34, 35, 36, 38,\\ 39, 40, 48, 49, 50, 51, 52, 53, 55, 57, 58, 60, 62, 64, 65, 67\}
\end{split}
\end{equation*}
has CM.
\end{prop}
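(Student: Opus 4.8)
The strategy is to recognize that each curve $\X_i$ listed in \cref{Phyperelliptic} is \emph{hyperelliptic}, i.e.\ has an affine equation of the form $y^2 = g(x)$ (possibly after a change of variables), or more generally admits a degree-two map to $\P^1$ whose source is one of the curves already analyzed by M\"uller and Pink in \cite{muller-pink}. The first step is to verify, case by case from the data of \cref{Tcurvelist2}, that for each index $i$ in the list the pair $(n, f(x))$ produces a curve that either literally has $n = 2$ (hence is hyperelliptic by definition) or is covered by / covers a hyperelliptic curve whose CM status was settled in \cite{muller-pink}. Indeed, scanning the table, every listed $\X_i$ has an even $n$ divisible by $2$, and one expects that quotienting by the subgroup $\langle \tau^{n/2}\rangle$ of the superelliptic group yields precisely a curve appearing in the Müller--Pink classification with reduced automorphism group $A_4$, $S_4$, or $A_5$.

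\textbf{Key steps.} First I would set up the reduction to the hyperelliptic locus: given $\X_i : y^n = f(x)$ with $n$ even, form the intermediate quotient $\X_i \to \X_i / \langle \tau^{n/2}\rangle =: \Y_i$, where $\Y_i$ is the hyperelliptic curve $w^2 = f(x)$ (setting $w = y^{n/2}$). By \cref{CCMquotient}, if $\Jac \X_i$ had CM then $\Jac \Y_i$ would have CM as well. Second, I would identify $\Y_i$ with the specific hyperelliptic curve in \cite{muller-pink} having the matching reduced automorphism group $\ol{N} \in \{A_4, S_4, A_5\}$ and defining polynomial $f \in \{r_4, s_4, t_4, r_5, s_5, t_5, \ldots\}$; the remark following \cref{Pstreitexamples} already establishes this dictionary for several curves, and the same correspondence extends to the remaining indices. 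Third, for each such $\Y_i$, the main result of \cite{muller-pink} records that $\Jac \Y_i$ does \emph{not} have CM (these are precisely the hyperelliptic curves with many automorphisms, reduced automorphism group $A_4$, $S_4$, or $A_5$, that fail Streit's criterion and for which Müller--Pink proved the converse). Fourth, I would conclude by contraposition: since $\Jac \Y_i$ lacks CM, \cref{CCMquotient} forces $\Jac \X_i$ to lack CM as well.

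\textbf{The main obstacle.} The genuinely delicate step is the bookkeeping in the second step — matching each index $i$ in the list to the correct curve in \cite{muller-pink} and confirming that the hyperelliptic quotient $w^2 = f(x)$ is exactly one of the curves Müller--Pink ruled out (rather than, say, one they confirmed has CM). This requires checking that $f(x)$ for index $i$ reduces to a defining polynomial whose associated hyperelliptic curve fails Streit's criterion, and that the reduced automorphism group of $\Y_i$ is genuinely $A_4$, $S_4$, or $A_5$ and not accidentally larger (which could land it in a CM case). I would carry this out by cross-referencing \cref{Tcurvelist2} with the tables of \cite{muller-pink}, being careful that the hyperelliptic quotient inherits the full reduced automorphism group and that the listed genera and polynomials are consistent. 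Once the dictionary is pinned down, the CM-obstruction propagates upward through \cref{CCMquotient} uniformly, and no further computation is needed.
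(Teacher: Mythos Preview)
Your proposal is correct and matches the paper's proof essentially verbatim: the paper identifies $S = \{9, 15, 25, 28, 31, 36, 48, 53, 58, 65\}$ as the hyperelliptic ($n=2$) curves already ruled out in \cite{muller-pink}, and observes that every other $\X_i$ in the list has one of these $\X_j$ as a quotient via $y \mapsto \zeta y$, then invokes \cref{CCMquotient}. Your ``main obstacle'' is overstated --- the bookkeeping is immediate, since the hyperelliptic quotient $w^2 = f(x)$ of $\X_i$ is literally the entry $\X_j$ in \cref{Tcurvelist2} with the same $f$ and $n=2$, so no separate identification with the M\"uller--Pink tables is required.
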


\begin{proof}
The curves $\X_i$ for $i \in S := \{9, 15, 25, 28, 31, 36, 48, 53, 58,  65\}$ are all hyperelliptic, and were shown not to have CM in  \cite[Table 1]{muller-pink}.  For each of the other curves $\X_i$ in the proposition,  there exists $j \in S$ such that $\X_i$ has $\X_j$ as a quotient by  an automorphism fixing $x$ and multiplying $y$ by an appropriate  root of unity.  Since $\X_j$ does not have CM, neither does $\X_i$.
\end{proof}

\subsection{Using stable reduction}\label{Sstable}
It is well-known that if $\X$ is a CM curve defined over a number
field $K$, then its Jacobian $\Jac
\X$ has potentially good reduction modulo all primes of $K$
(\cite[Theorem 6]{Serre}).  For any such prime $\mf{p}$, let
$K_{\mf{p}}$ be the corresponding completion of $K$.
Assume the genus of $\X$ is at least $2$.
The \emph{stable reduction theorem} states that there exists a finite extension
$L/K_{\mf{p}}$ for which $\X \times_K L$ has a \emph{stable model}
$\X^{st}$ over $\Spec \mc{O}_L$, where $\mc{O}_L$ is the ring of integers of
$L$ (see, e.g., \cite[Corollary 2.7]{DM}).  Specifically, $\X^{st} \to
\Spec \mc{O}_L$ is a
flat relative curve whose generic fiber is
isomorphic to $\X$ and whose special fiber $\ol{\X}$ (called the
\emph{stable reduction} of $\X$ modulo $\mf{p}$) is
reduced, has smooth irreducible components, has only ordinary
double points for singularities, and has the property that each
irreducible component of genus zero contains at least three singular
points of $\ol{\X}$.
One forms the \emph{dual graph}
$\Gamma_{\ol{\X}}$ of $\ol{\X}$ by taking the vertices of $\Gamma_{\ol{\X}}$ to
correspond to the irreducible components of $\ol{\X}$, with an edge
between two vertices for each point where the two corresponding components
intersect.  A model $\X^{ss}$ of of $\X \times_K L$ is called a
\emph{semistable model} if it satisfies all the properties of a stable
model, except possibly the requirement on genus zero irreducible
components.  The dual graph of the special fiber of any $\X^{ss}$ is homeomorphic to
$\Gamma_{\ol{X}}$.

One has a similar construction for smooth curves $\mc{Y}/K$ with marked
points.  To wit, if $\lambda_1, \ldots, \lambda_m$ are points of
$\mc{Y}(K)$ and $2g + m \geq 3$, there exists a unique stable model
$\mc{Y}^{st}$ of the marked curve $(\mc{Y}, \{\lambda_1, \ldots,
\lambda_m\})$, which is defined as above, except that we require only
that each genus zero component of the special fiber $\ol{\mc{Y}}$
contain at least three points that are \emph{either} singular points of
$\ol{\mc{Y}}$ \emph{or} specializations of marked points. We also require
that the marked points specialize to distinct smooth points of $\ol{\mc{Y}}$.  Note that
if $\mc{Y} = \proj^1$, then $\Gamma_{\ol{\mc{Y}}}$ is always a tree.

By \cite[Chapter 9, \S2]{BLR}, $\Jac
\X$ has potentially good reduction if and only if $\Gamma_{\ol{\X}}$ is
a tree (i.e., has trivial first homology).  Thus, if we can find a
prime $\mf{p}$ for which $\Gamma_{\ol{\X}}$ is not a tree, then $\Jac
\X$ does not have CM.  We will use this criterion for 
several of the curves in \cref{Tcurvelist2}, generalizing
\cite[\S10]{MuellerThesis} to the superelliptic case.

For the rest of  \cref{Sstable}, let $K$ be a complete discrete valuation field with residue field $k$,
and let $n \in \nats$ with char$(k) \nmid n$.
Let $\X \to \proj^1_K$ be the (potentially) $\ints/n$-cover of
$\proj^1_K$ given by the affine equation $$y^n = \prod_i (x - \alpha_i),$$
where the $\alpha_i$ are pairwise distinct elements of $K$.  Let $B$ be the set of branch points of $\X \to \proj^1_K$ (so
$B$ consists of the $\alpha_i$, as well as $\infty$ if $n \nmid
\deg(\prod_i (x - \alpha_i)$).  Let $\mc{Y}^{st}$ be the stable model of the marked curve
$(\proj^1_K, B)$, and let $\Gamma_{\ol{\mc{Y}}}$ be the dual graph of
its special fiber.
Assume that $g(\X) \geq 2$, which in turn implies $|B| \geq 3$.

\begin{lem}\label{Lstableexplicit}
There exists a finite extension $L/K$ with valuation ring $\mc{O}_L$, such
that the normalization of $\mc{Y}^{st} \times_{\mc{O}_K} \mc{O}_L$ in
$L(\X)$ is a semistable model of $\X$ over $L$. 
\end{lem}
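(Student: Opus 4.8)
The plan is to analyze the cover $\X \to \proj^1_K$ after base change, working component-by-component over the special fiber of $\mc{Y}^{st}$. The key point is that $\mc{Y}^{st}$ is a model of $\proj^1_K$ whose special fiber is a tree of $\proj^1_k$'s, and the branch points $B$ specialize to distinct smooth points. Since $\car(k) \nmid n$, the cover $\X \to \proj^1_K$ is tamely ramified, so after a suitable finite extension $L/K$ we can hope to spread the Kummer cover out over the model and control what happens on each component.

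First I would fix notation for $\mc{Y}^{st}$: label its irreducible components $\ol{\mc{Y}}_v$ for $v$ a vertex of $\Gamma_{\ol{\mc{Y}}}$, each isomorphic to $\proj^1_k$. On each such component, the marked points (specializations of $B$, together with the nodes) cut out a configuration of points. Because $\X \to \proj^1_K$ is given by the explicit Kummer equation $y^n = \prod_i(x - \alpha_i)$, restricting attention to a single component $\ol{\mc{Y}}_v$ amounts to understanding how many of the $\alpha_i$ (and $\infty$) specialize onto $\ol{\mc{Y}}_v$ and with what local multiplicities; the node points record the total ``mass'' of branch points flowing into each subtree. The idea is that over each component one obtains a Kummer cover of $\proj^1_k$ branched at the specialized marked points and at the nodes, and I would compute the branching data there using the valuations of the $\alpha_i - \alpha_j$ and the geometry encoded by $\mc{Y}^{st}$.

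The main technical step is the base change: to make $\mc{Y}^{st} \times_{\mc{O}_K} \mc{O}_L$ the base of a well-behaved Kummer cover, I would pass to a finite extension $L/K$ large enough that (a) all the relevant ramification indices become divisible appropriately so that the normalization has reduced, smooth components, and (b) the cover is \emph{tamely} ramified along the special fiber, so that Abhyankar's lemma applies. Concretely, after such an extension the normalization of $\mc{Y}^{st} \times_{\mc{O}_K} \mc{O}_L$ in $L(\X)$ will have special fiber whose components are the normalizations of the Kummer covers of the $\ol{\mc{Y}}_v$; tameness guarantees these are smooth and that the singularities of the total space lying over the nodes of $\mc{Y}^{st}$ are themselves nodes (this is the content of the local analysis of a tame cover over a node $\Spec \mc{O}_L[[u,w]]/(uw - \pi)$). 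I would invoke Abhyankar's lemma in this local ``node'' chart to show that after adjoining appropriate roots of the uniformizer the cover becomes a disjoint union of copies of smooth pieces meeting transversally, i.e.\ the singularities are ordinary double points.

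The hardest part will be verifying the semistability conditions precisely at the nodes: one must check that the normalization introduces no worse-than-nodal singularities and that each genus-zero component meets the rest of the curve in enough points (or carries enough marked structure) for semistability, which reduces to a careful local computation with the Kummer equation in the two-variable node chart. Once the local picture at every node and at every specialized branch point is shown to be semistable via Abhyankar's lemma and tameness, the global statement follows by gluing: the normalization is a model of $\X$ over $\mc{O}_L$ whose generic fiber is $\X \times_K L$ and whose special fiber is reduced with smooth components meeting only at ordinary double points. I would remark that we do not claim this semistable model is \emph{stable}; indeed some genus-zero components may fail the three-point condition, which is exactly why the lemma asserts only semistability and why the dual graph is merely homeomorphic (rather than equal) to $\Gamma_{\ol{\mc{Y}}}$ after contraction.
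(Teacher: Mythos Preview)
Your outline is mathematically sound: the tameness hypothesis $\car(k)\nmid n$, the component-by-component Kummer analysis, and the use of Abhyankar's lemma at the nodes are exactly the ingredients that make such a normalization semistable. However, the paper does not carry out any of this directly; its entire proof is a one-line citation to \cite[Corollary~3.6]{BW}, applied with $(\proj^1_K, B)$ in the role of their marked curve. What you have sketched is essentially the content of that reference, so your route is not so much \emph{different} as it is an unpacking of the cited result. If you want a self-contained argument you would need to actually execute the local node computation you flag as ``the hardest part'' (showing the normalization of $\mc{O}_L[[u,w]]/(uw-\pi)$ in a degree-$n$ Kummer extension has only ordinary double points after a suitable base change); otherwise, citing \cite{BW} as the paper does is the efficient move.

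One minor correction to your final remark: the homeomorphism statement in the paper concerns the dual graph of any semistable model of $\X$ being homeomorphic to $\Gamma_{\ol{\X}}$ (the dual graph of the \emph{stable} reduction of $\X$), not to $\Gamma_{\ol{\mc{Y}}}$. The relationship between $\Gamma_{\ol{\X}}$ and $\Gamma_{\ol{\mc{Y}}}$ is instead mediated by the map $\pi$ of \cref{Eprojection}.
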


\begin{proof}
This follows from \cite[Corollary 3.6]{BW} and its proof, with $(\proj^1_K, B)$ playing
the role of $(X_{L_0}, D_{L_0})$.
\end{proof}

The normalization from \cref{Lstableexplicit} induces a map
\begin{equation}\label{Eprojection}
  \pi: \Gamma_{\ol{\X}} \to \Gamma_{\ol{\mc{Y}}}
\end{equation}
of graphs.

\begin{lem}\label{Lnonisom}
Let $\pi$ be as in \cref{Eprojection}.
  \begin{enumerate}[\upshape (i)]
    \item If $v$ is a leaf of $\Gamma_{\ol{\mc{Y}}}$ (i.e., a vertex
      incident to only one edge), then $|\pi^{-1}(v)| \leq n/2$.
    \item Suppose there exists an edge $e$ of 
$\Gamma_{\ol{\mc{Y}}}$ such that removing $e$ splits
$\Gamma_{\ol{\mc{Y}}}$ into two trees $T_1$ and $T_2$ where $n$ divides the number of elements of $B$
 specializing to each of $T_1$ and $T_2$.  Then $|\pi^{-1}(e)| = n$.
 \end{enumerate}
\end{lem}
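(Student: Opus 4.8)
The plan is to read off both statements from the explicit component-by-component description of the cover $\ol{\X} \to \ol{\mc{Y}}$ furnished by \cref{Lstableexplicit} and the models of \cite{BW}. Since $\car(k) \nmid n$, the induced map $\ol{\X} \to \ol{\mc{Y}}$ is a tame $\ints/n$-cover, and $\ints/n$ acts transitively on the fiber $\pi^{-1}(v)$ over each vertex $v$ and on $\pi^{-1}(e)$ over each edge $e$. Hence $|\pi^{-1}(v)| = [\ints/n : D_v]$, where $D_v$ is the decomposition group of a chosen component of $\ol{\X}$ over the component $\ol{Y}_v$ indexed by $v$, and $|\pi^{-1}(e)| = [\ints/n : I_e]$, where $I_e$ is the inertia group at a node lying over $e$. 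The central point, which I would extract from \cite{BW} and the theory of tame cyclic covers of disks and annuli, is a dictionary between these local groups and the combinatorics of $B$: cutting $\Gamma_{\ol{\mc{Y}}}$ along $e$ separates $B$ into two parts, and $I_e$ is generated by the \emph{winding number} $w_e := \sum_b m_b \pmod n$, the sum being over the branch points $b$ specializing to one fixed side, where $m_b \in \ints/n$ is the local monodromy of $\X \to \proj^1$ at $b$. Because $f$ is separable, every finite branch point has $m_b = 1$, while $\infty$ (if it is a branch point) has $m_\infty = -\deg f$; similarly $D_v$ is generated by all $m_b$ for $b$ specializing to $v$ together with the $w_e$ for edges $e$ incident to $v$.

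For (i), let $v$ be a leaf, incident to the single edge $e$. By the stability of the marked model $\mc{Y}^{st}$, the component $\ol{Y}_v$ carries at least three special points, exactly one of which is the node; hence at least two points of $B$ specialize to $v$, and in particular at least one \emph{finite} branch point specializes to $v$, since at most one element of $B$ is $\infty$. That branch point makes the tame cover $\ol{X}_v \to \ol{Y}_v$ genuinely ramified, so $D_v \neq \{0\}$, whence $|D_v| \geq 2$ and $|\pi^{-1}(v)| = [\ints/n : D_v] \leq n/2$. (In fact the finite branch point contributes a generator $m_b = 1$, so $D_v = \ints/n$ and $|\pi^{-1}(v)| = 1$, but only the weaker bound is needed downstream.)

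For (ii), let $e$ be an edge whose removal splits $\Gamma_{\ol{\mc{Y}}}$ into trees $T_1$ and $T_2$ with $n \mid \#(B \cap T_i)$ for $i = 1,2$. At most one part, say $T_2$, contains $\infty$; then every branch point specializing to $T_1$ is finite, with local monodromy $1$. Computing the winding number along the finite side gives $w_e \equiv \sum_{b \in B \cap T_1} m_b = \#(B \cap T_1) \equiv 0 \pmod n$. Hence $I_e = \langle w_e \rangle$ is trivial, the node over $e$ is unramified, and $|\pi^{-1}(e)| = [\ints/n : I_e] = n$. (If $\infty \notin B$, either side may be used and the conclusion is the same.)

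The main obstacle is not the combinatorics above but the rigorous justification of the monodromy–winding-number dictionary: one must verify, using the semistable model of \cref{Lstableexplicit} and the explicit local description in \cite{BW}, that over the annulus corresponding to $e$ the restricted cover is the tame Kummer cover $y^n = (\text{unit}) \cdot t^{w_e}$ with $w_e \equiv \sum_{b \text{ on one side}} m_b$, and that over the disk corresponding to $v$ the restricted cover is branched precisely at the specializations of the branch points on $v$ and at the node, with the stated monodromies. Once this local picture is in place, both (i) and (ii) follow from the standard fact that a tame $\ints/n$-cover of a disk (resp.\ annulus) branched according to a multiset $S \subseteq \ints/n$ of monodromies (resp.\ with winding number in $S$) has exactly $[\ints/n : \langle S \rangle]$ connected components.
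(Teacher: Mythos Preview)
Your proof is correct and reaches the same conclusions as the paper, though packaged differently. For (i), the paper argues more concretely and without the decomposition-group formalism: since a leaf $v$ carries the specialization of at least one branch point $b$, and the preimage of $b$ in $\X$ has at most $n/2$ points (all ramification indices being $\geq 2$), every component of $\ol{\X}$ above the corresponding $J$ must contain the specialization of one of these $\leq n/2$ preimages, giving the bound by a direct count. Your observation that in fact $|\pi^{-1}(v)| = 1$ (because a \emph{finite} branch point always specializes to $v$, forcing $D_v = \ints/n$) is a genuine strengthening, though as you note it is not needed downstream. For (ii), the paper carries out explicitly the very computation you defer to your ``monodromy--winding-number dictionary'': it chooses a coordinate $x_v$ on the component $J$ adjacent to $e$ following \cite[\S4]{BW} so that the node sits at $\ol{x}_v = 0$, and then reads off that $\ord_{\ol{x}_v = 0}(\ol{f}_v)$ equals the number of branch points specializing to the far side of $e$, which is divisible by $n$ by hypothesis --- this is exactly your $w_e \equiv 0 \pmod n$, and \cite[Proposition~4.5]{BW} then yields $n$ preimages of the node. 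So the two arguments have the same content; yours is phrased abstractly via inertia and decomposition groups, the paper's via the explicit local equations of \cite{BW}. The advantage of your phrasing is that it makes the role of the divisibility hypothesis transparent; the advantage of the paper's is that the appeal to \cite{BW} is pinpointed rather than left as a dictionary to be verified.
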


\begin{proof}
  By the definition of marked stable model, the irreducible component
  $J$ of $\ol{\mc{Y}}$ corresponding to $v$ contains the specialization of
  at least one element $b$ of $B$.  Since the cover $\X \to \proj^1_K$ is
  potentially Galois and ramification indices are at least 2, the preimage of $b$ in $\X$ contains at
  most $n/2$ points.  Since every irreducible component of $\ol{\X}$
  lying above $J$ contains the specialization of one of these points,
  there are at most $n/2$ such irreducible components.  By the
  construction of $\pi$, this proves
  part (i).

  Now, assume $e$, $T_1$, and $T_2$ are as in part (ii).  Let $v$ be
  the unique vertex of $T_2$ incident to $e$, and let $J$ be the
  corresponding irreducible component of $\ol{\mc{Y}}$.  For $i \in
  \{1, 2\}$, write $B_i$ for the subset of $B$ consisting of branch
  points specializing to $T_i$.  Choose three distinct elements $\alpha$, $\beta$, and $\gamma$ of $B$, such
  that $\alpha \in B_1$ and such that $(\alpha, \beta, \gamma)$
  corresponds to $v$ via \cite[Proposition 4.2(3)]{BW}.  Note that in
  the construction of \cite{BW}, $\alpha$ can be chosen freely, and
  then it is automatic that $\beta, \gamma \in B_2$.  This is because
  if, say, $\beta \in B_1$, then in the language
  of \cite{BW}, we would have $\ol{\lambda}_t(\beta) = \ol{\lambda}_t(\alpha)$, which contradicts
  the definition of $\lambda_t$ above \cite[Proposition 4.2]{BW}). 
  The same holds for $\gamma$.

  As in \cite[Notation 4.4]{BW}, the triple $(\alpha, \beta, \gamma)$ gives rise
  to a coordinate $x_v$ on $\proj^1_K$ whose reduction $\ol{x}_v$
  is a coordinate on $J$ such that $\ol{x}_v = 0$ at the
  point of $J$ corresponding to the edge $e$.\footnote{Specifically, we have
  $x_v = \frac{\beta - \gamma}{\beta - \alpha}\frac{x - \alpha}{x-\gamma}$.}
 Since $\ol{x}_v$ is a coordinate on $J$, we in fact have that an
 element $b \in B$ satisfies $\ol{x}_v(b) = 0$ if and only if $b \in B_2$.

 As in \cite[\S4.3]{BW}, we can write $f$ in terms of the variable $x_v$, multiply by an
  appropriate element of $K$, and then reduce modulo a uniformizer of
  $K$ to obtain an element $\ol{f}_v \in k(\ol{x}_v)$.  Since $\ol{x}_v(b) = 0$
  if and only if $b \in B_2$, the order of $\ol{f}_v$ at
  $\ol{x}_v = 0$ is $|B_2|$, which is assumed to be
  divisible by $n$.

  By \cite[Proposition 4.5]{BW}, the restriction of the cover $\ol{\X}
  \to \ol{\mc{Y}}$ above $J$ is given birationally by the equation $y_v^n =
  \ol{f}_v$.  Since $\ord_{\ol{x}_v = 0}(\ol{f}_v)$ is divisible by
  $n$, the preimage of $\ol{x}_v = 0$ in $\ol{\X}$ has
  cardinality $n$.  This means that $|\pi^{-1}(e)| = n$, proving part (ii).
\end{proof}

\begin{prop}\label{Ptree}
In the situation of \cref{Lnonisom}(ii), the graph $\Gamma_{\ol{\X}}$ is
not a tree.
\end{prop}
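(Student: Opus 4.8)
The plan is to exploit the fact that $\Gamma_{\ol{\X}}$ is automatically connected (being the dual graph of the special fiber of a semistable model of the smooth connected curve $\X$), so that it suffices to produce a single cycle. First I would invoke \cref{Lnonisom}(ii) to get exactly $n$ edges of $\Gamma_{\ol{\X}}$ lying over the edge $e$; call this set $S$, so $|S| = n$. Removing $S$ separates $\Gamma_{\ol{\X}}$ into the two subgraphs $C_1 := \pi^{-1}(T_1)$ and $C_2 := \pi^{-1}(T_2)$, because any edge of $\Gamma_{\ol{\X}}$ not in $S$ lies over an edge of $T_1$ or of $T_2$ and hence has both endpoints over the corresponding tree. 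Since $n \geq 2$, if I can show that $C_1$ and $C_2$ are each connected, then choosing two distinct edges of $S$ and joining their endpoints by paths inside $C_1$ and inside $C_2$ produces a nontrivial cycle; equivalently, for a connected graph split into two connected pieces by deleting a set $S$ of edges one has $b_1(\Gamma_{\ol{\X}}) = b_1(C_1) + b_1(C_2) + (|S| - 1) \geq n - 1 \geq 1$, so $\Gamma_{\ol{\X}}$ is not a tree.

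The heart of the argument, and the step I expect to be the main obstacle, is proving that $C_1$ and $C_2$ are connected, and I would do this in two steps. First, the Galois group $G = \ints/n$ acts on $C_i$ with quotient $\pi(C_i) = T_i$, and since $T_i$ is connected, $G$ acts transitively on the set $\pi_0(C_i)$ of connected components; in particular $|\pi_0(C_i)|$ divides $n$. Second, I would exhibit a single $G$-fixed component. Since $n \mid |B_i|$ with $n \geq 2$ and $B_i \neq \emptyset$, the set $B_i$ contains a \emph{finite} branch point $\alpha_j$, which is totally ramified of index $n$ in $\X \to \proj^1$. Its specialization is a smooth point of $\ol{\mc{Y}}$ on a component corresponding to a vertex of $T_i$, and the point $P$ of $\ol{\X}$ above it is a smooth point with full inertia, hence full decomposition, group $\ints/n = G$; thus $G$ fixes $P$ and the unique irreducible component of $\ol{\X}$ containing $P$, which is a component of $C_i$. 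Combining the two steps, the transitive $G$-action on $\pi_0(C_i)$ has a fixed point, so $|\pi_0(C_i)| = 1$ and $C_i$ is connected.

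It remains to justify the inputs used above, and these are precisely where the hypothesis $n \mid |B_i|$ matters beyond its role in \cref{Lnonisom}(ii). To see that $B_i \neq \emptyset$, I would use the defining property of the marked stable model of $(\proj^1, B)$: every component of $\ol{\mc{Y}}$ carries at least three special points, and the points of $B$ specialize to distinct smooth points. If $B_i$ were empty, then a leaf of the tree $T_i$ distinct from the vertex incident to $e$ (or $T_i$ itself, if it is a single vertex) would correspond to a component of $\ol{\mc{Y}}$ with at most one node and no marked points, violating stability; hence $B_i \neq \emptyset$, and then $|B_i| \geq n \geq 2$ forces $B_i$ to contain a finite branch point, since it can contain $\infty$ at most once. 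I would also record the elementary identity $b_1(\Gamma) = b_1(C_1) + b_1(C_2) + (|S| - 1)$ used in the first paragraph, which follows at once from $b_1 = \#\text{edges} - \#\text{vertices} + 1$ together with the connectedness of $\Gamma$, $C_1$, and $C_2$. With $C_1$, $C_2$ connected and $|S| = n \geq 2$, this completes the proof.
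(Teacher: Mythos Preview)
Your proof is correct but takes a genuinely different route from the paper's. The paper does \emph{not} prove that $C_1$ and $C_2$ are connected; instead it invokes \cref{Lnonisom}(i) to bound the number of connected components of each $\pi^{-1}(T_i)$ by $n/2$ (since every component surjects onto $T_i$, hence hits a leaf of $T_i$, and there are at most $n/2$ vertices over that leaf). With at most $n$ components total after deleting the $n$ edges over $e$, the Euler-characteristic count gives $V \leq E + n$ for the truncated graph, hence $b_1(\Gamma_{\ol{\X}}) \geq 1$. Your argument bypasses \cref{Lnonisom}(i) entirely: you use the $\ints/n$-action on the semistable model (legitimate, since the model is built as a normalization in the Galois extension $L(\X)/L(\proj^1)$) together with a totally ramified branch point in each $B_i$ to pin down a $G$-fixed component, forcing $|\pi_0(C_i)| = 1$. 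This is a cleaner structural statement and yields the sharper bound $b_1(\Gamma_{\ol{\X}}) \geq n-1$, at the cost of invoking the Galois action on the model and the inertia at a ramified section; the paper's approach is purely combinatorial once \cref{Lnonisom} is in hand.
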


\begin{proof}
Let $e$ be the edge from \cref{Lnonisom}(ii).  Consider the graph
$\Gamma$ constructed by removing $\pi^{-1}(e)$ from
$\Gamma_{\ol{\X}}$.  Then $\Gamma$ is the disjoint union of
$\pi^{-1}(T_1)$ and $\pi^{-1}(T_2)$.  Since $\pi$ is
ultimately constructed from a normalization \cref{Lstableexplicit}, every connected
component of $\pi^{-1}(T_i)$ maps surjectively onto $T_i$ for $i \in
\{1, 2\}$.  If $v_i$ is a leaf of $\Gamma_{\ol{\mc{Y}}}$ in $T_i$, then \cref{Lnonisom}(i)
shows that $|\pi^{-1}(v_i)| \leq n/2$, so there are at most $n/2$ connected components in
$\pi^{-1}(T_i)$.  So if $V$ (resp.\ $E$) is the number of vertices
(resp.\ edges) in $\Gamma$, then $V \leq E + n/2 + n/2 = E +
n$.  Since $V$ (resp.\ $E + n$) is the number of vertices (resp.\
edges) in $\Gamma_{\ol{\X}}$, this shows that the first homology of
$\Gamma_{\ol{\X}}$ has dimension at least $1$, so $\Gamma_{\ol{\X}}$
is not a tree.
\end{proof}

\begin{prop}\label{PBadReductionCurves}
None of the curves $\X_{17}$, $\X_{21}$, $\X_{42}$, $\X_{54}$, $\X_{59}$, or $\X_{61}$ in   \cref{Tcurvelist2} has CM Jacobian.
\end{prop}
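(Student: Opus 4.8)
The plan is to apply the bad-reduction criterion developed in \cref{Sstable}. Recall that if $\Jac \X$ has CM then, being defined over a number field, it has potentially good reduction at every prime (\cite[Theorem 6]{Serre}), and by \cite[Chapter 9, \S2]{BLR} this forces the dual graph $\Gamma_{\ol{\X}}$ of the stable reduction to be a tree at every prime. Thus it suffices, for each of the six curves, to exhibit a single prime $\mf{p}$ (of a number field over which $\X$ and its branch points are defined) at which $\Gamma_{\ol{\X}}$ is \emph{not} a tree; this immediately shows that $\Jac \X$ cannot have CM.

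To produce such a prime I would invoke \cref{Ptree}. Writing $\X : y^n = f(x)$ and letting $B \subseteq \proj^1$ be its branch locus (the roots of $f$, together with $\infty$ when $\delta = 1$), I first record that in each of the six cases $n \mid |B|$, by \cref{Pnvalues}. Consequently, by \cref{Lnonisom}(ii) and \cref{Ptree}, it is enough to find a prime $\mf{p}$ for which the stable model $\mc{Y}^{st}$ of the marked curve $(\proj^1, B)$ has an edge $e$ whose removal separates $\Gamma_{\ol{\mc{Y}}}$ into two subtrees $T_1, T_2$ carrying $B_1$ and $B_2$ with $n \mid |B_i|$. Since $n \mid |B|$ already holds, it suffices to find a single proper, nonempty \emph{cluster} of branch points whose cardinality is divisible by $n$: the edge of $\Gamma_{\ol{\mc{Y}}}$ lying just above such a cluster then does the job.

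The concrete task, carried out case by case, is therefore to determine the cluster picture of $B$ modulo a well-chosen prime, i.e.\ the $\mf{p}$-adic proximity relations among the roots of $f$ and $\infty$. Here the geometry is a useful guide: the roots of $t_4, s_4, r_4$ (resp.\ $s_5, r_5, t_5$) are the vertices, face-centers, and edge-midpoints of an octahedron (resp.\ icosahedron) inscribed in $\proj^1$, so these configurations are defined over $\Q(i)$ (resp.\ $\Q(\sqrt{5})$) and degenerate only at the small primes dividing the relevant discriminants. For each curve I would locate a prime above $2$ (for the octahedral curves $\X_{17}, \X_{21}$) or an appropriate small prime (for the icosahedral curves $\X_{42}, \X_{54}, \X_{59}, \X_{61}$) at which the branch points collapse into clusters, compute the valuations of the pairwise differences using the explicit coordinates from \cref{prop-1}, and exhibit a cluster of size divisible by $n$. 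With such a cluster in hand, \cref{Ptree} yields that $\Gamma_{\ol{\X}}$ is not a tree, completing the argument.

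The main obstacle is precisely this last step: reliably computing the stable model $\mc{Y}^{st}$ of $(\proj^1, B)$ at the chosen prime, which amounts to resolving the (possibly deeply nested) clusters of colliding branch points and tracking the induced cover via \cite[Proposition 4.5]{BW} to confirm the divisibility hypothesis of \cref{Lnonisom}(ii). This is the superelliptic generalization of the hyperelliptic computation in \cite[\S10]{MuellerThesis}, and I would carry it out with the machinery of \cite{BW} together with a computer-algebra check of the relevant valuations, verifying in each of the six cases that some separating cluster has cardinality $\equiv 0 \pmod{n}$.
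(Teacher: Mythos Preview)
Your approach is exactly the paper's: for each curve, exhibit a prime at which the marked stable model of $(\proj^1,B)$ has a separating edge with $n$ dividing the number of markings on each side, and then invoke \cref{Ptree}. The paper's choices of prime are $p=3$ for $\X_{17}$ and $p=5$ for $\X_{61}$ (both trees already appear in \cite[\S10]{MuellerThesis}, yielding splits $5+15$ and $7+35$) and $p=2$ for $\X_{21},\X_{42},\X_{54},\X_{59}$ (via a Sage computation, with splits $6+12$, $6+24$, $10+40$, $6+36$); so your tentative suggestion of a prime above $2$ for $\X_{17}$ is not the one the paper uses, and you should be prepared to switch to $p=3$ there.
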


\begin{proof}
For $\X_{17}$, the tree $\Gamma_{\ol{\mc{Y}}}$ as in \cref{Ptree} for the reduction modulo $3$ is shown in
\cite[Figure 3, p.\ 43]{MuellerThesis}.  In this case, $n = 5$, and
there are four edges which split the tree up into subtrees with $5$
and $15$ marked points.  By \cref{Ptree}, $\Gamma_{\ol{\X}_{17}}$ is
not a tree.  So $\Jac \X_{17}$ has bad
reduction, and thus does not have CM. 

For $\X_{61}$, the tree $\Gamma_{\ol{\mc{Y}}}$ for reduction modulo $5$ is shown in
\cite[Figure 11, p.\ 46]{MuellerThesis}.  In this case, $n = 7$, and
there are four edges which split the tree up into subtrees with $7$
and $35$ marked points.  Using \cref{Ptree} as before, $\Jac \X_{61}$ has bad
reduction, and thus does not have CM. 

For curves $\X_{21}$, $\X_{42}$, $\X_{54}$, and $\X_{59}$, the program
\texttt{stable\_reduction.sage}\footnote{available as
  part of the arxiv posting for this paper} gives the tree $\Gamma_{\ol{\mc{Y}}}$ for reduction modulo
$2$.
\begin{itemize}
  \item For curve $\X_{21}$, there is an edge splitting $\Gamma_{\ol{\mc{Y}}}$ into two trees with
$6$ and $12$ markings, and $n=3$.
  \item For curve $\X_{42}$, there is an edge splitting $\Gamma_{\ol{\mc{Y}}}$ into two trees with
$6$ and $24$ markings, and $n=3$.
  \item For curve $\X_{54}$, there is an edge splitting $\Gamma_{\ol{\mc{Y}}}$ into two trees with
$10$ and $40$ markings, and $n=5$.
  \item For curve $\X_{59}$, there is an edge splitting $\Gamma_{\ol{\mc{Y}}}$ into two trees with
    $6$ and $36$ markings, and $n=3$.
\end{itemize}
  In all cases $i \in \{21, 42, 54, 59\}$, using \cref{Ptree} as before shows that $\Gamma_{\ol{\X}_i}$ is not a
  tree, which means that $\Jac \X$ has bad
  reduction, and thus does not have CM.
\end{proof}

\begin{cor}\label{Ctree}
None of the curves $\X_{22}$, $\X_{23}$, $\X_{24}$, $\X_{44}$, $\X_{46}$,
$\X_{47}$, $\X_{56}$, or $\X_{63}$ in \cref{Tcurvelist2} has CM Jacobian.
\end{cor}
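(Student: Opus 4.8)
The plan is to realize each curve listed in the corollary as a finite cover of one of the curves already shown to lack CM in \cref{PBadReductionCurves}, and then to invoke the contrapositive of \cref{CCMquotient}. This is the same bootstrapping strategy as in \cref{Phyperelliptic}, with the role of the hyperelliptic curves now played by the bad-reduction curves of \cref{PBadReductionCurves}.

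First I would match each curve with an appropriate target. Inspecting \cref{Tcurvelist2}, each curve in the statement shares its reduced automorphism group and defining polynomial $f(x)$ with one of the curves of \cref{PBadReductionCurves}, and its value of $n$ is an integer multiple of the corresponding smaller value. Concretely, $\X_{22}$, $\X_{23}$, $\X_{24}$ all have $f = r_4 t_4$ with $n$ divisible by $3$, matching $\X_{21}$ (where $n = 3$); the curves $\X_{44}$, $\X_{46}$, $\X_{47}$ have $f = t_5$ with $n$ divisible by $3$, matching $\X_{42}$; the curve $\X_{56}$ has $f = r_5 t_5$ with $n = 25$ divisible by $5$, matching $\X_{54}$; and $\X_{63}$ has $f = s_5 t_5$ with $n = 21$ divisible by $3$, matching $\X_{59}$.

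Next I would write the covering map explicitly. If $\X_i : y^{n_i} = f(x)$ and $\X_j : w^{n_j} = f(x)$ with $n_j \mid n_i$ and identical $f(x)$, then setting $w = y^{n_i/n_j}$ yields $w^{n_j} = y^{n_i} = f(x)$, so the assignment $(x, y) \mapsto (x,\, y^{n_i/n_j})$ defines a finite morphism $\X_i \to \X_j$ of degree $n_i/n_j$ between smooth projective curves. Since $\X_j$ was shown in \cref{PBadReductionCurves} not to have CM, the contrapositive of \cref{CCMquotient} applied to this morphism (if $\Jac \X_i$ had CM then $\Jac \X_j$ would, contradicting \cref{PBadReductionCurves}) forces $\Jac \X_i$ to lack CM as well.

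There is essentially no obstacle here beyond the bookkeeping of the table: the only points to check are that the polynomial $f(x)$ and reduced automorphism group agree between each $\X_i$ and its chosen target $\X_j$, which is immediate from \cref{Tcurvelist2}, and that $n_j \mid n_i$, which I have verified in the matching above. In particular, no new reduction computation is needed, since the bad reduction was already established for the targets in \cref{PBadReductionCurves}.
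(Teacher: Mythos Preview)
Your proof is correct and follows essentially the same approach as the paper: each curve is exhibited as a cover of one of the curves from \cref{PBadReductionCurves} via $(x,y)\mapsto(x,y^{n_i/n_j})$, and \cref{CCMquotient} is applied in the contrapositive. The only cosmetic difference is that for $\X_{63}$ the paper chooses $\X_{61}$ (with $n=7$) as the target rather than your $\X_{59}$ (with $n=3$); since both lie in \cref{PBadReductionCurves} and both $3$ and $7$ divide $21$, either choice works.
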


\begin{proof}
  The curves $\X_{22}$, $\X_{23}$, and $\X_{24}$ have $\X_{21}$ as a quotient (via an
  automorphism multiplying $y$ by an appropriate root of unity).
Likewise, the curves $\X_{44}$, $\X_{46}$, and $\X_{47}$
  have $\X_{42}$ as a quotient.  The curve $\X_{56}$ has
  $\X_{54}$ as a quotient.  The curve $\X_{63}$ has $\X_{61}$ as a
  quotient.

  Since all quotients of a CM curve must be CM curves,
  \cref{PBadReductionCurves} shows that none of the curves
  in the corollary is a CM curve.
\end{proof}

\subsection{Frobenius criterion}\label{Sfrobenius}
To show that the remaining curves in \cref{Tcurvelist2} do not
have CM, we use a criterion of M\"{u}ller--Pink.  Let $A$ be an
abelian variety defined over a number field $K$.  For any prime
$\mf{p}$ of $K$ where $A$ has good reduction, let $f_{\mf{p}} \in
\rats[T]$ be the minimal polynomial of the Frobenius at $\mf{p}$
acting on the Tate module of the reduction of $A$ modulo $\mf{p}$.
Let $E_{f_\mf{p}}$ equal $\rats[T]/f_{\mf{p}}$.  Since the Frobenius action is semisimple, the polynomial $f_{\mf{p}}$ has 
no multiple factors.

If $t$ is the image of $T$ in $E_{f_{\mf{p}}}$, then let 
$E'_{f_{\mf{p}}} \subseteq E_{f_{\mf{p}}}$ be the subring given by
intersecting the rings of $\rats[t^n]_{n \in \nats}$ inside
$E_{f_{\mf{p}}}$.  Observe that, if $f_{\mf{p}} = g(T^m)$ for some
polynomial $g$ and $m \in \nats$, we can replace $f_{\mf{p}}$ by
$g(T)$ when computing $E_{f_{\mf{p}}}'$.

The following criterion can be used to show that $A$ does not have CM.

\begin{prop}[{\cite[Theorem 6.2 (a) $\Rightarrow$ (b)]{muller-pink}}]\label{Pmullerpink}
Maintain notation as above.  If $\dim(A) = g$ and $A$ has CM, then
there exists a product of number fields $E$ with $\dim_{\rats}E \leq 2g$ such that for any good prime
$\mf{p}$, we have an embedding $E'_{f_{\mf{p}}} \hookrightarrow E$.
\end{prop}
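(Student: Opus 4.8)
The statement is quoted from \cite{muller-pink}, so one may simply invoke that reference; I instead sketch the argument I would give. The plan is to produce a single algebra $E$ — the fixed CM algebra of $A$ — into which \emph{every} $E'_{f_{\mf{p}}}$ embeds, using the fact that only a \emph{power} of Frobenius is guaranteed to land in $E$. This is precisely why the construction passes to the subring $E'_{f_{\mf{p}}}$ rather than to $E_{f_{\mf{p}}}$ itself.

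First I would fix the target. Since $A$ has CM, $\End^0(A)$ contains a commutative semisimple $\rats$-subalgebra of dimension $2g$; choose $E$ to be a maximal such subalgebra, so $E$ is a product of number fields with $\dim_{\rats} E = 2g$, fixed once and for all independently of $\mf{p}$. The endomorphisms realizing the $E$-action are defined over some finite extension $K'/K$. Next, fix a good prime $\mf{p}$ and let $\bar A$ be the reduction, an abelian variety over the residue field $k_{\mf{p}}$ with $|k_{\mf{p}}| = q$. Good reduction is stable under base change, so $A_{K'}$ has good reduction above $\mf{p}$, and the $E$-action reduces to give an injection $E \hookrightarrow \End^0(\bar A)$ of geometric endomorphism algebras, with image $\bar E$ a commutative semisimple subalgebra of dimension $2g = 2\dim \bar A$. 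Because any commutative semisimple subalgebra of $\End^0(\bar A)$ has $\rats$-dimension at most $2\dim \bar A$, the subalgebra $\bar E$ is maximal, hence self-centralizing: if $z \in \End^0(\bar A)$ is semisimple and commutes with $\bar E$, then $\bar E[z]$ is commutative semisimple of dimension $\leq 2g = \dim_{\rats} \bar E$, forcing $z \in \bar E$.

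Now I would bring in Frobenius. Let $\pi$ be the geometric Frobenius of $\bar A$ over $k_{\mf{p}}$; it is semisimple by hypothesis, and by construction $f_{\mf{p}}$ is its minimal polynomial, so $E_{f_{\mf{p}}} = \rats[T]/f_{\mf{p}} \cong \rats[\pi]$ via $t \mapsto \pi$. The endomorphisms in $\bar E$ are defined over the residue field $k'$ of $K'$; taking $n = [k' : k_{\mf{p}}]$, the power $\pi^n$ is the Frobenius of $\bar A$ relative to $k'$ and is therefore central in the endomorphism algebra over $k'$, which contains $\bar E$. Hence $\pi^n$ commutes with $\bar E$, and by the self-centralizing property established above, $\pi^n \in \bar E$. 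Consequently $\rats[\pi^n] \subseteq \bar E \cong E$. Since $E'_{f_{\mf{p}}} \cong \bigcap_m \rats[\pi^m] \subseteq \rats[\pi^n]$ (the term $m = n$ alone already gives the containment), this yields the desired embedding $E'_{f_{\mf{p}}} \hookrightarrow E$ into the fixed algebra $E$, valid for every good prime.

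The main obstacle is exactly the appearance of $E'$ rather than $E_{f_{\mf{p}}}$: the full Frobenius $\pi$ need not commute with $\bar E$, since the CM endomorphisms are typically \emph{not} defined over $k_{\mf{p}}$ itself, so $\rats[\pi] = E_{f_{\mf{p}}}$ need not embed into $E$. The key structural inputs that resolve this are that a sufficiently high power of Frobenius becomes central once all CM endomorphisms are defined over the base field, together with the self-centralizing property of $\bar E$ arising from the maximal dimension bound $2g$. Intersecting over all powers of $t$ is precisely the device that extracts the part of $E_{f_{\mf{p}}}$ guaranteed to survive into $E$.
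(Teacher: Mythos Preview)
The paper does not prove this proposition at all; it simply cites \cite[Theorem 6.2 (a) $\Rightarrow$ (b)]{muller-pink} and moves on. Your sketch is correct and is essentially the standard argument behind that result: take $E$ to be a maximal commutative semisimple subalgebra of $\End^0(A)$, reduce it modulo $\mf{p}$ after enlarging the base so that all of $E$ is defined, use the dimension bound $2g$ to see that the image $\bar E$ is its own centralizer, and then observe that a suitable power $\pi^n$ of Frobenius lies in $\bar E$ because it is central once the CM endomorphisms are rational. Passing to $E'_{f_{\mf p}} = \bigcap_m \rats[t^m] \subseteq \rats[t^n]$ is exactly what makes the embedding land in the fixed $E$ uniformly in $\mf p$, as you explain. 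One small point you might make explicit for completeness: the step ``$\bar E[z]$ is commutative semisimple'' uses that $\bar E \otimes_{\rats} \rats[z]$ is a tensor product of \'etale $\rats$-algebras, hence \'etale, and $\bar E[z]$ is a quotient of it; alternatively one can bypass semisimplicity of $z$ entirely by arguing via the faithful action on $V_\ell(\bar A)$ that the centralizer of $\bar E$ in $\End^0(\bar A)$ is $\bar E$ itself.
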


\begin{rem}
The paper of M\"{u}ller--Pink uses the simpler (but weaker) criterion of
\cite[Corollary 6.7]{muller-pink}.  However, it appears to be
difficult to use this criterion to prove \cref{Pruleout} below.
\end{rem}

Before our main application of \cref{Pmullerpink}, we prove two lemmas.

\begin{lem}\label{Lproductsoffields}
If $K_1, \ldots, K_n$ and $L_1, \ldots, L_m$, are characteristic $0$ fields, then there
exists a $\rats$-algebra embedding of $K_1 \times \cdots \times K_n$
into $L_1 \times \cdots \times L_m$ if and only if there exists a
surjective map $\phi: \{1, \ldots, m\} \to \{1, \ldots, n\}$ such
that for all $i \in \{1, \ldots, m\}$, there exists an embedding
$\gamma_i: K_{\phi(i)} \hookrightarrow L_i$. 
\end{lem}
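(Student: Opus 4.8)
The plan is to reduce everything to the idempotent structure of the two algebras. Write $R = K_1 \times \cdots \times K_n$ and $S = L_1 \times \cdots \times L_m$, and let $e_1, \ldots, e_n \in R$ be the standard orthogonal idempotents, with $e_j$ equal to $1$ in the $j$-th coordinate and $0$ elsewhere, so that $e_j e_{j'} = 0$ for $j \neq j'$ and $\sum_j e_j = 1$. For the \emph{if} direction, given a surjection $\phi$ and embeddings $\gamma_i : K_{\phi(i)} \hookrightarrow L_i$, I would define $\Psi : R \to S$ coordinatewise by letting the $i$-th coordinate of $\Psi(x_1, \ldots, x_n)$ be $\gamma_i(x_{\phi(i)})$. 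Each coordinate is the composite of the projection $R \to K_{\phi(i)}$ with the ring embedding $\gamma_i$, so $\Psi$ is a unital $\mathbb{Q}$-algebra homomorphism; and if $\Psi(x) = 0$ then $\gamma_i(x_{\phi(i)}) = 0$ for every $i$, hence $x_{\phi(i)} = 0$ for every $i$ by injectivity of $\gamma_i$, and surjectivity of $\phi$ forces $x_j = 0$ for all $j$. This gives injectivity.

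For the substantive \emph{only if} direction, I start from a unital injective $\mathbb{Q}$-algebra map $\Psi : R \hookrightarrow S$. The key observation is that the idempotents of a finite product of fields are exactly the characteristic vectors of subsets of the index set, since each field has only the idempotents $0$ and $1$. As $\Psi$ is a ring homomorphism, each $\Psi(e_j)$ is an idempotent of $S$, hence corresponds to a subset $S_j \subseteq \{1, \ldots, m\}$. Orthogonality of the $e_j$ makes the $S_j$ pairwise disjoint, the relation $\sum_j e_j = 1$ makes them cover $\{1, \ldots, m\}$, and injectivity of $\Psi$ together with $e_j \neq 0$ forces each $S_j$ to be nonempty. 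Thus the $S_j$ partition $\{1, \ldots, m\}$ into nonempty blocks, and I define $\phi(i) = j$ whenever $i \in S_j$; this is well-defined and surjective.

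To produce the embeddings, I would fix $i$, set $j = \phi(i)$, and consider the ring homomorphism $\pi_i \circ \Psi : R \to L_i$, where $\pi_i$ is projection onto the $i$-th factor. Since $L_i$ is a field, the image is a domain, so the kernel is a maximal ideal of $R$; the maximal ideals of $R$ are precisely the $\mathfrak{m}_{j'} = \{x \in R : x_{j'} = 0\}$. Because the $i$-th coordinate of $\Psi(e_{j'})$ equals $1$ if $i \in S_{j'}$ and $0$ otherwise, disjointness of the blocks gives $\pi_i(\Psi(e_{j'})) = 0$ for all $j' \neq j$ while $\pi_i(\Psi(e_j)) = 1$. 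Hence every $e_{j'}$ with $j' \neq j$ lies in the kernel but $e_j$ does not, which pins the kernel down to $\mathfrak{m}_j$. Therefore $\pi_i \circ \Psi$ factors through $R/\mathfrak{m}_j \cong K_j = K_{\phi(i)}$ and induces an injective unital ring homomorphism $\gamma_i : K_{\phi(i)} \hookrightarrow L_i$, that is, a field embedding, completing the construction.

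The main obstacle is really just the bookkeeping in the forward direction: one must use that $\Psi$ is genuinely unital, so that $\sum_j \Psi(e_j) = 1$ and the blocks $S_j$ cover the index set, and one must observe that injectivity of $\Psi$ is exactly what rules out empty blocks and thereby guarantees surjectivity of $\phi$. Everything else rests on the two standard facts that ring homomorphisms preserve idempotents and their orthogonality, and that the maximal ideals of a finite product of fields are the coordinate hyperplanes $\mathfrak{m}_{j'}$.
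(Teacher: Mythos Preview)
Your proof is correct and follows essentially the same approach as the paper's: both directions hinge on the projections $\pi_i \circ \Psi : R \to L_i$ having kernel equal to one of the coordinate maximal ideals $\mathfrak{m}_j$, which defines $\phi(i)=j$, with injectivity of $\Psi$ forcing surjectivity of $\phi$. The only difference is presentational: the paper argues directly that each $\ker(\pi_i \circ \Psi)$ is prime and hence equals some $\mathfrak{m}_j$, whereas you first track the images of the idempotents $e_j$ to build the partition $\{S_j\}$ and then read off the kernel—your version is more explicit about why unitality and injectivity are needed, but the underlying argument is the same.
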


\begin{proof}
An embedding $\pi: K_1 \times \cdots \times K_n \hookrightarrow
  L_1 \times \cdots \times L_m$ gives rise to a $\rats$-algebra
  morphism $\pi_i: K_1 \times \cdots \times K_n \to L_i$ for each
  $i$.  Since the kernel is a prime ideal, this morphism is projection
  onto some $K_j$ followed by an embedding $K_j \hookrightarrow L_i$.
  Set $j = \phi(i)$.  The kernel of $\pi$ is the intersection of the
  kernels of the $\pi_i$, which is trivial only if each $j$ is equal to
  $\phi_i$ for some $i$, i.e., if $\phi$ is surjective.  Thus the
  condition in the lemma is necessary for the existence of an
  embedding.

Conversely, if the condition in the lemma is satisfied, then we define the  following $\rats$-algebra morphism, which is easily seen to be an embedding.
\begin{align*}
    \pi: K_1 \times \cdots \times K_n &\hookrightarrow L_1 \times                                        \cdots \times L_m \\
    (r_1, \ldots, r_n) &\mapsto (\gamma_i(r_{\phi(1)}), \ldots, \gamma_i(r_{\phi(m)})).
\end{align*}
\end{proof}

\begin{lem}\label{Lquotients}
The curves $\X_5$, $\X_6$, $\X_{26}$, $\X_{29}$, $\X_{33}$, $\X_{43}$,
and $\X_{66}$
from \cref{Tcurvelist2}
have quotients with the following affine equations, respectively,
where $i$ is a square root of $-1$: \\

\bigskip

\renewcommand\arraystretch{1.2}

\begin{tabular}{|l|l|}
\hline 
Curve & Affine Birational Equation of Quotient Curve \\
\hline
  $\X_5$ & $y^3 = x^6 - 33x^4 - 33x^2 + 1$ \\ \hline
  $\X_6$ &  $y^4 = x^6 - 33x^4 - 33x^2 + 1$ \\ \hline
  $\X_{26}$ & $y^7 = (x^4 - 4i x^2 + 12)(x^2 - 2i)$  \\ \hline
  $\X_{29}$ & $y^{13} = (x^2-4)^7(x+14)(x-34)$  \\ \hline
  $\X_{33}$ & $y^5 = x^{10} + 10x^8 + 35x^6 -228x^5 + 50x^4 - 1140x^3
  + 25x^2 -1140x + 496$  \\ \hline
  $\X_{43}$ & $y^5 = x^6 + 522x^5 - 10005x^4 - 10005x^2 - 522x + 1$\\
  \hline
 $\X_{66}$ & $y^{31} = (x^2 - 228x + 496)^2(x^2 + 522x -
             10004)^2(x+11)^2(x^2 + 4)$\\ \hline

\end{tabular}
\end{lem}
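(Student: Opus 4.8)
The plan is to realize each of the seven curves as an explicit quotient $\X_i/K$ by a finite subgroup $K \subseteq \Aut(\X_i)$ coming from a subgroup $\ol K \subseteq \ol N$, and then to read off its equation in an invariant coordinate. The structural input is that the superelliptic group $H = \langle\tau\rangle$ is central in its normalizer $N$ (\cref{Pnormalcentral}), so $H$ descends to any such quotient. Given $\ol K \subseteq \ol N$, each element lifts to $\Aut(\X_i)$ by the argument of \cref{Pnvalues}(ii), using that $\ol N$ permutes the branch locus (\cref{Rpermute}); these lifts assemble into a complement $K$ to $H$ (so $K \cap H = 1$, $|K| = |\ol K|$) --- explicitly via the identity on $y$ in the diagonal cases $\X_5, \X_6, \X_{43}$, and otherwise by Schur--Zassenhaus, which applies since $\gcd(|\ol K|, n) = 1$ for $\X_{26}, \X_{29}, \X_{33}, \X_{66}$. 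Then $\X_i/K$ is again superelliptic of level $n$ over $\proj^1/\ol K \cong \proj^1$, and I recover its equation $Y^n = G(X)$ by choosing a generator $X$ of $\complex(x)^{\ol K}$, a $K$-invariant lift $Y$ of $y$, and expressing the branch divisor of $\X_i \to \proj^1$ in terms of $X$.

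When $\ol K$ acts diagonally in the coordinate already fixed by \cref{prop-1}, the computation is immediate. For $\X_5$ and $\X_6$ I take $\ol K = \langle x \mapsto -x\rangle$, a vertex-type involution of $S_4$ fixing no root of $r_4$; since $r_4(x) = h(x^2)$ with $h(u) = u^6 - 33u^4 - 33u^2 + 1$, the quotient under $(x,y)\mapsto(-x,y)$ is $y^n = h(X)$, $X = x^2$, as stated. For $\X_{43}$ I take $\ol K = \langle x \mapsto \zeta_5 x\rangle \subseteq A_5$ (fixing the two vertices $0,\infty$, which are not roots of $t_5$); writing $t_5(x) = h(x^5)$ gives $y^5 = h(X)$, $X = x^5$, exactly as listed. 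In these cases verification is the one-line check that $\ol K$ preserves the defining equation.

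The remaining curves $\X_{26}, \X_{29}, \X_{33}, \X_{66}$ need the same recipe but with subgroups that are not diagonal in the chosen coordinate, and this is where the real work lies: here $\ol K$ is an edge-type involution of $S_4$ for $\X_{26}$, an edge-type involution of $A_5$ for $\X_{33}$, the order-$8$ Sylow subgroup $D_4 \subseteq S_4$ for $\X_{29}$, and the order-$10$ dihedral subgroup $D_5 \subseteq A_5$ for $\X_{66}$. For each I will write $\ol K$ explicitly as a group of M\"obius transformations, using the octahedral/icosahedral generators underlying \cite[Table 1]{san-2} that already appear in the proof of \cref{prop-1}, compute a generator $X$ of $\complex(x)^{\ol K}$, and re-expand $f(x)$ --- a product of the orbit polynomials $r$, $s$, $t$ --- in terms of $X$. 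For $\X_{26}$ and $\X_{33}$, where $\ol K$ fixes no branch point, this yields a clean $Y^n = h(X)$: the octahedral coordinate forces the constant $i$ into the $\X_{26}$ equation, while for $\X_{33}$ the icosahedral (golden-ratio, $\Q(\sqrt{5})$) change of variables collapses to the displayed degree-$10$ integer polynomial. For $\X_{29}$ and $\X_{66}$ the subgroup fixes the vertices, which are branch points, so the quotient acquires repeated factors: the exponent $7$ in $(X^2-4)^7$ and the squares in the $\X_{66}$ equation are the ramification multiplicities of $\proj^1 \to \proj^1/\ol K$ at those fixed points, read modulo $n$.

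I expect the main obstacle to be precisely this last bookkeeping: carrying out the explicit M\"obius coordinate changes for $S_4$ and $A_5$ (over $\Q(i)$ and $\Q(\sqrt{5})$ respectively) and correctly tracking how the branch points and their multiplicities push forward under the degree-$|\ol K|$ map, including the final M\"obius normalization of $X$ that sends a branch point to $\infty$ and clears denominators to match the tabulated form. As a consistency check, the genus of each displayed quotient, computed from its superelliptic equation by the Hurwitz formula, must agree with the value forced by Riemann--Hurwitz for the cover $\X_i \to \X_i/K$ (for instance $g = 18$ for the $D_4$-quotient of $\X_{29}$ and $g = 90$ for the $D_5$-quotient of $\X_{66}$); these checks, together with the symbolic expansions, are naturally carried out in the accompanying Sage/GAP code. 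Conceptually the argument is routine; the difficulty is organizational.
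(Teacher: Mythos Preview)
Your plan is correct and aligns with the paper's proof: for each curve you quotient by the same subgroup the paper uses (the diagonal involution for $\X_5,\X_6$, the order-$5$ rotation for $\X_{43}$, the involutions $x\mapsto i/x$ and $x\mapsto -1/x$ for $\X_{26}$ and $\X_{33}$, and the dihedral groups of order $8$ and $10$ for $\X_{29}$ and $\X_{66}$), and then rewrite the equation in an invariant coordinate. The paper, however, bypasses the structural layer entirely: rather than appealing to \cref{Pnvalues}(ii) or Schur--Zassenhaus for the existence of a complement $K$, it simply writes down explicit automorphisms of the function field (e.g.\ $\sigma(x)=i/x,\ \sigma(y)=iy/x^2$ for $\X_{26}$) together with explicit generators of the fixed subfield (e.g.\ $z=x+i/x,\ w=y/x$), and then verifies the resulting equation by direct expansion. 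In particular, for $\X_{33}$ the relevant involution is $x\mapsto -1/x$, which is already defined over~$\Q$, so no $\Q(\sqrt{5})$ coordinate change is needed; your anticipated ``golden-ratio'' complication does not arise.
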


\medskip

\begin{proof}
  For $\X_5$ and $\X_6$, the affine equation given is the obvious
  quotient by the automorphism fixing $y$ and sending $x$ to $-x$.
  Likewise, for $\X_{43}$, the affine equation given is the obvious
  quotient by the automorphism fixing $y$ and sending $x$ to $\zeta_5
  x$.

  For $\X_{26}$, consider the order 2 automorphism $\sigma$ of
  $\complex(\X_{26})$ given by $\sigma(x) = i/x$ and $\sigma(y) =
  iy/x^2$.  The fixed subfield is generated by $z$ and $w$, where $z =
  x + i/x$ and $w = y/x$.  The affine equation of $\X_{26}$ is 
\[
y^7 =  x(x^4 - 1)(x^8 + 14x^4 + 1).
\]
In terms of $w$ and $z$, this becomes
  \begin{align*}
   w^7 &= \left(x^2 - \frac{1}{x^2}\right)\left(x^4 + 14 +
           \frac{1}{x^4}\right) \\
       &= (z^2 - 2i)(z^4 - 4iz^2 +  12),
  \end{align*}
as can be checked by hand or with a computer.  Changing back to $x$  and $y$ gives the equation in the table.

For $\X_{29}$, consider the automorphism group isomorphic to $D_8$  generated by $\sigma$ and $\tau$ where $\sigma(x,y) = (ix, iy)$ and $\tau(x, y) = (1/x, y/x^2)$.  
The fixed subfield is generated by $z$ and $w$, where $z = x^4 +  1/x^4$ and $w = y(x^8-1)/x^5$.  The affine equation of $\X_{29}$  is 
\[ y^{13} = x(x^8-1)(x^8+14x^4+1)(x^8-34x^4+1).\]
In terms of $w$ and $z$, this becomes
\[ w^{13} = (z^2-4)^7(z+14)(z-34),\]
as can be checked by hand.  Changing back to $x$  and $y$ gives the equation in the table.

For $\X_{33}$, consider the order 2 automorphism $\sigma$ of  $\complex(\X_{33})$ given by $\sigma(x) = -1/x$ and $\sigma(y) =  y/x^4$.  The fixed subfield is generated by $z$ and $w$, where $z =
  x - 1/x$ and $w = y/x^2$.  The affine equation of $\X_{33}$ is 
\[
y^5 =  x^{20} - 228x^{15} + 494x^{10} + 228x^5 + 1.
\]
  In terms of $w$ and $z$, this becomes
  \begin{align*}
   w^5 &= x^{10} - 228x^5 + 494 + \frac{228}{x^5} + \frac{1}{x^{10}} \\
       &= z^{10} + 10z^8 + 35z^6 -228z^5 + 50z^4 - 1140z^3
  + 25z^2 -1140z + 496,
  \end{align*}
as can be checked by hand or with a computer.  Changing back to $x$ and $y$ gives the equation in the table.

For $\X_{66}$, consider the automorphism group isomorphic to $D_{10}$  generated by $\sigma$ and $\tau$ where $\sigma(x,y) = (\zeta_5 x, \zeta_5 y)$ and $\tau(x, y) = (-1/x, y/x^2))$.  Here $\zeta_5$ is some primitive $5$th root of unity.      The fixed subfield is generated by $z$ and $w$, where $z = x^5 -  1/x^5$ and $w = y^2/x^2$.  The affine equation of $\X_{66}$   is 
\[
\begin{split}
y^{31}  = &x(x^{20} - 228x^{15} + 494x^{10} + 228x^5 + 1)(x^{30}  + 522x^{25} - 10005x^{20} - 10005x^{10} \\
 & - 522x^5 +1)(x^{10} + 11x^5  -1). \\
\end{split}
\]    
In terms of $w$ and $z$, this becomes
\[
w^{31} = (z^2 - 228z + 496)^2(z^2 + 4)(z^2 + 522z -  10004)^2(z+11)^2
\]             
as can be checked tediously by hand or more easily with some basic computational assistance.  Changing back to $x$  and $y$ gives the equation in the table.

\end{proof}

\begin{prop}\label{Pruleout}
  None of the curves $\X_2$, $\X_5$, $\X_6$, $\X_{26}$, $\X_{29}$,
  $\X_{33}$, $\X_{43}$, or $\X_{66}$
  in \cref{Tcurvelist2} has CM Jacobian.
\end{prop}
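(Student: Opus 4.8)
The plan is to invoke the M\"{u}ller--Pink criterion \cref{Pmullerpink}: if $\Jac \X$ has CM with $\dim \Jac \X = g$, then there is a single product of number fields $E$ with $\dim_{\rats} E \leq 2g$ admitting a $\rats$-algebra embedding $E'_{f_{\mf p}} \hookrightarrow E$ for \emph{every} prime $\mf p$ of good reduction. To contradict this, I would exhibit, for each curve, a small collection of good primes whose associated algebras $E'_{f_{\mf p}}$ cannot simultaneously embed into any such $E$. Before doing so, I would use \cref{CCMquotient} to replace all the curves except $\X_2$ by the lower-genus quotient curves recorded in \cref{Lquotients}: since CM descends to quotients, it suffices to rule out CM for these quotients, and their smaller genera make both the bound $2g$ and the Frobenius computations far more tractable. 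The curve $\X_2$ has genus $16$ and I would treat it directly.

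For each of these (quotient) curves, which is defined over a number field such as $\rats$, $\rats(i)$, or $\rats(i,\sqrt{3})$, I would choose several primes of good reduction, reduce the curve modulo such a prime to obtain a curve over a finite field $\Fq$, and compute the numerator of its zeta function. Factoring this numerator and using the semisimplicity of Frobenius yields the squarefree minimal polynomial $f_{\mf p}$, from which I form $E_{f_{\mf p}} = \rats[T]/f_{\mf p}$ and then the subring $E'_{f_{\mf p}}$, decomposed as a product of number fields (using the simplification before \cref{Pmullerpink} that sends $f_{\mf p} = g(T^m)$ to $g(T)$). With two or more primes in hand, I would apply \cref{Lproductsoffields} to the putative common target $E$: each field factor of each $E'_{f_{\mf p}}$ forces $E$ to contain a compatible subfield, and I would select the primes so that these combined constraints force $\dim_{\rats} E > 2g$, contradicting \cref{Pmullerpink}. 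This rules out CM for the quotient, and hence, by \cref{CCMquotient}, for the original curve.

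The reduction to quotients via \cref{CCMquotient} is what makes the approach feasible, since it both shrinks the target dimension $2g$ and keeps the Frobenius polynomials of manageable degree. The main obstacle is the combinatorial and computational heart of the argument: factoring the zeta-function numerators to recover $f_{\mf p}$, correctly computing the subrings $E'_{f_{\mf p}}$, and, most delicately, locating primes whose Frobenius algebras are genuinely incompatible in the sense of \cref{Lproductsoffields}. This is exactly the kind of verification that the accompanying Sage and GAP code is designed to carry out; the conceptual content lies in confirming that the incompatibility of the field factors really does overflow the dimension bound $2g$ for any common $E$, rather than in any single explicit calculation.
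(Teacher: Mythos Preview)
Your proposal is correct and matches the paper's own proof essentially line for line: the paper also passes to the quotient curves of \cref{Lquotients} for $i \in \{5,6,26,29,33,43,66\}$ (treating $\X_2$ directly), computes $E'_{f,\mf p}$ at a handful of good primes via Sage (Magma for the non-superelliptic quotients $\X_{29}'$ and $\X_{66}'$), and then uses \cref{Lproductsoffields} to force any common receptacle $E$ to have $\dim_{\rats} E > 2g$, contradicting \cref{Pmullerpink}. The only content you have left implicit is the explicit choice of primes and the resulting field tables, which the paper supplies case by case.
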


\begin{proof}
For $i \in \{5, 6, 26, 29, 33, 43, 66\}$, let $\X_i'$ be the quotient curve of
$X_i$ from \cref{Lquotients}.  It suffices to prove that neither
$\X_2$ nor any of these $\X_i'$ has CM.  For each curve, we use the
the program \texttt{frobenius\_polynomials.sage}\footnote{available as
  part of
 the arxiv posting for this paper} to compute the
algebras $E'_{f, \mf{p}}$ for various $p$, and then we show that it is
impossible for all the $E'_{f, \mf{p}}$ to embed into a
$\rats$-algebra of the correct dimension.  

For all cases other than
$\X_{29}'$ and $\X_{66}'$, the curve is $\X_i'$ is itself a
superelliptic curve , and we can compute the Frobenius minimal polynomial
using the superelliptic curve package in Sage (which only works on
these types of curves).  For $\X_{29}'$ and
$\X_{66}'$, we instead use the function \texttt{ZetaFunction} from
Magma, and take the reciprocal polynomial of radical of the numerator.  The results are in the
following charts.

\renewcommand\arraystretch{1.2}
\begin{longtable}{|l|l|}
  \caption{Curve $\X_2$, genus $g_2 := 16$} \\
  \hline $\mf{p}$ & $E'_{f, \mf{p}}$ \\
  \hline 7 & $\rats(\sqrt{-3}) \times \rats(\sqrt{-5 \cdot 23})$ \\
  \hline 13 & $\rats(\sqrt{-3})$ \\
  \hline 31 & $K_2 \times L_2$ \\
  \hline 43 & $\rats(\sqrt{-3}) \times \rats(\sqrt{-5 \cdot 127})$ \\
  \hline 67 & $\rats(\sqrt{-3}) \times \rats(\sqrt{-5 \cdot 11 \cdot 13})$ \\
  \hline  
\end{longtable}

Here, $[K_2:\rats] = [L_2:\rats] = 8$.
Additionally, one verifies that the only quadratic field
contained in $K_2$ is $\rats(\sqrt{5})$ and the only quadratic fields
contained in $L_2$ are those contained in $\rats(\sqrt{-3}, \sqrt{5})$.
Using \cref{Lproductsoffields}, one sees that a minimum-dimensional
product of number fields containing all these $E'_{f, \mf{p}}$ is
$$K_2(\sqrt{-3}) \times L_2 \times \rats(\sqrt{-3}, \sqrt{-5 \cdot 23}) \times
\rats(\sqrt{-3}, \sqrt{-5 \cdot 127}) \times \rats(\sqrt{-3}, \sqrt{-5
  \cdot 11 \cdot 13}),$$
which has dimension $16 + 8 + 4 + 4 + 4 > 32 = 2g_{2}$.  By \cref{Pmullerpink},
$\X_{2}$ does not have CM.

\begin{longtable}{|l|l|}
  \caption{Curve $\X_5'$, genus $g_5 := 4$} \\
  \hline $\mf{p}$ & $E'_{f, \mf{p}}$ \\
  \hline 7 & $\rats(\sqrt{-3})$ \\
  \hline 17 & $\rats \times \rats(\sqrt{-2})$ \\
  \hline 23 & $\rats \times \rats(\sqrt{-33})$ \\
  \hline 29 & $\rats \times \rats(\sqrt{-39})$ \\
  \hline  
\end{longtable}

Using \cref{Lproductsoffields}, one sees that a minimum-dimensional
product of number fields
containing all of these $E'_{f, \mf{p}}$ is
$$\rats(\sqrt{-2}, \sqrt{-3}) \times \rats(\sqrt{-3}, \sqrt{-33})
\times \rats(\sqrt{-3}, \sqrt{-39}),$$ which has dimension $12 > 8 = 2g_5$.
By \cref{Pmullerpink}, $\X_5'$ does not have CM.

\begin{longtable}{|l|l|}
  \caption{Curve $\X_6'$, genus $g_6 := 7$} \\
  \hline $\mf{p}$ & $E'_{f, \mf{p}}$ \\
  \hline 5 & $\rats(\sqrt{-1})$ \\
  \hline 7 & $\rats \times \rats(\sqrt{-6})$ \\
  \hline 17 & $\rats(\sqrt{-1}) \times \rats(\sqrt{-33})$ \\
  \hline 19 & $\rats(\sqrt{-1}) \times \rats(\sqrt{-21})$ \\
  \hline 29 & $\rats \times \rats(\sqrt{-13})$ \\
  \hline  
\end{longtable}

Using \cref{Lproductsoffields}, one shows that a minimum-dimensional
product of number fields containing all of these $E'_{f, \mf{p}}$ is
$$\rats(\sqrt{-1}, \sqrt{-6}) \times \rats(\sqrt{-1}, \sqrt{-13})
\times \rats(\sqrt{-1}, \sqrt{-21}) \times \rats(\sqrt{-1},
\sqrt{-33}),$$ which has dimension $16 > 14 = 2g_6$.
By \cref{Pmullerpink}, $\X_6'$ does not have CM.

\begin{longtable}{|l|l|}
  \caption{Curve $\X_{26}'$, genus $g_{26} := 15$} \\
  \hline $\mf{p}$ & $E'_{f, \mf{p}}$ \\
  \hline 5 & $\rats \times \rats(\sqrt{-2 \cdot 7 \cdot 11})$ \\
  \hline 17 &  $\rats \times \rats(\sqrt{-2 \cdot 5 \cdot 7 \cdot 13})$ \\
  \hline 29 & $\rats(\sqrt{-7}) \times \rats(\zeta_7) \times L_{26}$ \\
  \hline 37 & $\rats(\sqrt{-7}) \times \rats(\sqrt{-7})$\\
  \hline 61 & $\rats \times \rats(\sqrt{-2 \cdot 193})$ \\
  \hline 73 & $\rats \times \rats(\sqrt{-2 \cdot 7 \cdot 23 \cdot 113 \cdot 211 \cdot 1571})$ \\
  \hline  
\end{longtable}

Here $[L_{26} : \rats] = 12$ and the only quadratic field  contained in $L_{26}$ is $\rats(\sqrt{-7})$.    Using \cref{Lproductsoffields}, one shows that a minimum-dimensional  product of number fields containing all of these $E'_{f, \mf{p}}$ is
\[
\begin{split}
& \rats(\sqrt{-7}, \sqrt{-2 \cdot 7 \cdot 11}) \times \rats(\sqrt{-7}, \sqrt{-2 \cdot 5 \cdot 7 \cdot 13}) \times \rats(\sqrt{-7}, \sqrt{-2 \cdot 193}) \times  \\
& \times \rats(\sqrt{-7}, \sqrt{-a}) \times \rats(\zeta_7) \times L_{26}, \\
\end{split}
\]
 where $a = 2 \cdot 7 \cdot 23 \cdot 113 \cdot 211 \cdot 1571$.  This has dimension $34 > 30 = 2g_{26}$. By \cref{Pmullerpink}, $\X_{26}'$ does not have CM.

\begin{longtable}{|l|l|}
  \caption{Curve $\X_{29}'$, genus $g_{29} := 18$} \\
  \hline $\mf{p}$ & $E'_{f, \mf{p}}$ \\
  \hline 19 & $\rats \times \rats(\sqrt{-3 \cdot 7 \cdot 3847})$ \\
  \hline 53 & $K_{29} \times L_{29}$ \\
 \hline  
\end{longtable}

Here $[K_{29} : \rats] = 12$ and $[L_{29} : \rats] = 24$, and the only quadratic field
contained in $K_{29}$ or $L_{29}$ is $\rats(\sqrt{13})$.   
Using \cref{Lproductsoffields}, one shows that a minimum-dimensional
product of number fields containing $E'_{f, \mf{p}}$ for $p \in \{19, 53\}$ is
$$K_{29}(\sqrt{-3 \cdot 7 \cdot 3847}) \times L_{29},$$ which has dimension $48 > 36 = 2g_{29}$.
By \cref{Pmullerpink}, $\X_{29}'$ does not have CM.

\begin{longtable}{|l|l|}
  \caption{Curve $\X_{33}'$, genus $g_{33} := 16$} \\
  \hline $\mf{p}$ & $E'_{f, \mf{p}}$ \\
  \hline 7 & $\rats \times \rats(\sqrt{-6})$ \\
  \hline 13 &  $\rats \times \rats(\sqrt{-1})$ \\
  \hline 17 & $\rats \times \rats(\sqrt{-5 \times 41})$ \\
  \hline 37 & $\rats \times \rats(\sqrt{-3 \times 47})$\\
  \hline 43 & $\rats \times \rats(\sqrt{-3 \times 83})$ \\
  \hline 61 & $\rats(\zeta_5)$ \\
  \hline  
\end{longtable}

Using \cref{Lproductsoffields}, one shows that a minimum-dimensional
product of number fields containing all of these $E'_{f, \mf{p}}$ is
$$\rats(\zeta_5, \sqrt{-6}) \times \rats(\zeta_5, \sqrt{-1}) \times
\rats(\zeta_5, \sqrt{-5 \cdot 41}) \times
\rats(\zeta_5, \sqrt{-3 \cdot 47}) \times \rats(\zeta_5, \sqrt{-3 \cdot 83}).$$
This has dimension $40 > 32 = 2g_{33}$.
By \cref{Pmullerpink}, $\X_{33}'$ does not have CM.

\begin{longtable}{|l|l|}
  \caption{Curve $\X_{43}'$, genus $g_{43} := 10$} \\
  \hline $\mf{p}$ & $E'_{f, \mf{p}}$ \\
  \hline 7 & $\rats \times \rats(\sqrt{-10})$ \\
  \hline 11 & $K_{43} \times L_{43}$ \\
  \hline 13 & $\rats \times \rats(\sqrt{-3})$ \\
  \hline 17 & $\rats \times \rats(\sqrt{-2 \cdot 3 \cdot 7 \cdot 19})$ \\
  \hline  
\end{longtable}

Here, $[K_{43}:\rats] = 12$ and $[L_{43}:\rats] =
4$.   Additionally, one verifies that the only quadratic field
contained in $K_{43}$ is $\rats(\sqrt{-5})$ and the only quadratic field
contained in $L_{43}$ is $\rats(\sqrt{-43 \cdot 1361})$.  
Using \cref{Lproductsoffields}, a minimum-dimensional
product of number fields containing all these $E'_{f, \mf{p}}$ is
$$K_{43} \times L_{43} \times \rats(\sqrt{-3}) \times \rats(\sqrt{-10}) \times \rats(\sqrt{-2
  \cdot 3 \cdot 7 \cdot 19}),$$
which has dimension $22 > 20 = 2g_{43}$.  By \cref{Pmullerpink},
$\X_{43}'$ does not have CM.  This
completes the proof.

\begin{longtable}{|l|l|}
  \caption{Curve $\X_{66}'$, genus $g_{66} := 90$} \\
  \hline $\mf{p}$ & $E'_{f, \mf{p}}$ \\
  \hline 13 & $\rats \times L_{13}$ \\
  \hline 17 & $\rats \times L_{17}$ \\
  \hline 37 & $\rats \times L_{37}$ \\
  \hline 47 & $K_{47} \times L_{47}$ \\
  \hline  
\end{longtable}

Here, $[K_{13}:\rats] = [L_{13}:\rats] = 4$, and $[L_{37}:\rats] =
20$, $[K_{47}:\rats] = 6$, and $[L_{47}:\rats] = 30$.  Furthermore,
there is an embedding $K_{47} \hookrightarrow L_{47}$, the fields $L_{13}$ and
$L_{17}$ are linearly disjoint, and $K_{47}$ is linearly disjoint from
$L_i$ for $i \in \{13, 17, 37\}$.  All of this is verified in
\texttt{frobenius\_polynomials.sage}.  
Since $K_{47} \hookrightarrow L_{47}$, \cref{Lproductsoffields} shows that any product of
number fields containing all these $E'_{f, \mf{p}}$ must have an
embedding of $K_{47}$ into each factor.  Another application of
\cref{Lproductsoffields} shows that a minimum-dimensional product of
number fields containing all these $E_{f, \mf{p}}$ is
$$K_{47}L_{13} \times K_{47}L_{17} \times K_{47}L_{37} \times L_{47},$$
which has dimension $198 > 180 = 2g_{66}$.  By \cref{Pmullerpink},
$\X_{43}'$ does not have CM.  This
completes the proof.

\end{proof}

\begin{cor}\label{Cmullerpink}
None of the curves $\X_{3}$, $\X_{7}$, $\X_8$, or $\X_{45}$ in \cref{Tcurvelist2} has CM Jacobian.
\end{cor}

\begin{proof}
 The curve $\X_3$ has $\X_2$ as a quotient.  The curves $\X_7$ and $\X_8$ have $\X_5$ as a quotient, and the curve $\X_{45}$ has
  $\X_{43}$ as a quotient.  Since all quotients of a CM curve must be CM curves,
  \cref{Pmullerpink} shows that none of the curves
  in the corollary is a CM curve.
\end{proof}

Now we are ready to state the main theorem.  
Recall that if $\X$ is a smooth superelliptic curve 
with reduced automorphism group isomorphic to $A_{4}$, $S_{4}$, and
$A_{5}$, then $\X$ is isomorphic to one of the curves in 
\cref{Tcurvelist2}.  
\begin{thm}\label{Tmain} For each case in \cref{Tcurvelist2}, whether or not $\Jac \X$  has CM is determined in the 6-th column of the Table. 
\end{thm}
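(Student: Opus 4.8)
The plan is to treat \cref{Tmain} as an assembly (bookkeeping) result: every row of \cref{Tcurvelist2} already carries a pointer, in its Justification column, to a proposition or corollary proved above that determines the CM status of that particular curve. So the task reduces to verifying (a) that each cited result indeed applies to the curve in question and records the value (YES/NO) appearing in the sixth column, and (b) that the indices $\X_0, \ldots, \X_{67}$ are partitioned without omission or overlap among the cited results. I would first invoke the preliminary remark preceding the table: by \cref{prop-1} and \cref{Pnvalues}, every superelliptic curve with many automorphisms and reduced automorphism group $A_4$, $S_4$, or $A_5$ is isomorphic to exactly one entry of \cref{Tcurvelist2}, so it suffices to run through the rows.

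For the rows marked YES---namely $\X_0$, $\X_1$, $\X_4$, $\X_{12}$, $\X_{13}$, $\X_{14}$, $\X_{20}$, $\X_{37}$, $\X_{41}$---I would simply cite \cref{Pstreitexamples}, where the sum of \cref{Pstreit} (i.e.\ Streit's criterion via \cref{Streit}) is computed to vanish, forcing $\Jac \X$ to have CM. No further argument is needed for these, as the computation is the content of that proposition.

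For the rows marked NO, I would organize the citations by the technique that supplies them, matching the Justification column exactly. The hyperelliptic bootstrapping of \cref{Phyperelliptic} disposes of the long list of indices handled there (curves that are hyperelliptic or admit a hyperelliptic quotient known not to have CM by \cite{muller-pink}). The stable-reduction method disposes of $\X_{17}$, $\X_{21}$, $\X_{42}$, $\X_{54}$, $\X_{59}$, $\X_{61}$ via \cref{PBadReductionCurves} (bad reduction, hence by \cite[Ch.\ 9, \S2]{BLR} and \cite[Theorem 6]{Serre} no CM) and then $\X_{22}$, $\X_{23}$, $\X_{24}$, $\X_{44}$, $\X_{46}$, $\X_{47}$, $\X_{56}$, $\X_{63}$ via the quotient corollary \cref{Ctree} (using \cref{CCMquotient}). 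Finally, the Frobenius criterion of M\"uller--Pink (\cref{Pmullerpink}) disposes of $\X_2$, $\X_5$, $\X_6$, $\X_{26}$, $\X_{29}$, $\X_{33}$, $\X_{43}$, $\X_{66}$ via \cref{Pruleout}, and then $\X_3$, $\X_7$, $\X_8$, $\X_{45}$ via the quotient corollary \cref{Cmullerpink}.

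I expect the only real obstacle to be the completeness check: one must confirm that the union of the index sets appearing in \cref{Pstreitexamples}, \cref{Phyperelliptic}, \cref{PBadReductionCurves}, \cref{Ctree}, \cref{Pruleout}, and \cref{Cmullerpink} is exactly $\{0, 1, \ldots, 67\}$ with each index used once, and that each YES/NO label matches the conclusion of the corresponding cited result. This is routine verification against the table rather than new mathematics. I would close by noting that \cref{Tmain} is stated only for the exceptional reduced groups recorded in \cref{Tcurvelist2}; the complementary cases $\ol{N} \cong C_m$ and $\ol{N} \cong D_{2m}$ are settled separately (all CM) by \cref{Cstreit} (equivalently \cref{PCmD2m}), so together these results determine the CM status for every superelliptic curve with many automorphisms.
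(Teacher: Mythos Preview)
Your proposal is correct and matches the paper's own proof, which simply cites \cref{Pstreitexamples}, \cref{Phyperelliptic}, \cref{PBadReductionCurves}, \cref{Ctree}, \cref{Cmullerpink}, and \cref{Pruleout} as together covering every row of \cref{Tcurvelist2}. Your write-up is just a more explicit unpacking of this bookkeeping, with the same partition of indices and the same justifications.
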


\begin{proof}
This follows from \cref{Pstreitexamples}, \cref{Phyperelliptic},
\cref{PBadReductionCurves}, \cref{Ctree},  \cref{Cmullerpink}, and \cref{Pruleout}.
\end{proof}

\begin{cor}\label{Cmain}
For superelliptic curves  with many automorphisms, having complex multiplication is equivalent to satisfying Streit's Criterion (\cref{Streit}).
\end{cor}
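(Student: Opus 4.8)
The plan is to combine the general implication of Streit's criterion with the complete CM classification obtained in \cref{Tmain}. One direction is essentially free: Streit's criterion implies CM by \cref{Streit}, with no further work required. The entire content of the corollary is therefore the converse, namely that every superelliptic curve with many automorphisms whose Jacobian has CM must in fact satisfy Streit's criterion.

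To establish the converse I would argue by exhausting the classification. The superelliptic curves with many automorphisms split, according to their reduced automorphism group, into the cyclic and dihedral cases on the one hand and the exceptional cases $A_4$, $S_4$, $A_5$ on the other. For the cyclic and dihedral cases, \cref{PCmD2m} shows that every such curve satisfies Streit's criterion while \cref{Cstreit} shows that every such curve has CM; hence both conditions hold simultaneously and the equivalence is automatic. For the exceptional cases, the relevant curves are precisely those enumerated in \cref{Tcurvelist2}.

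For these exceptional curves the key step is to observe that the curves satisfying Streit's criterion coincide exactly with the curves having CM. By \cref{Pstreitexamples}, the curves $\X_0$, $\X_1$, $\X_4$, $\X_{12}$, $\X_{13}$, $\X_{14}$, $\X_{20}$, $\X_{37}$, $\X_{41}$ satisfy Streit's criterion, and these are exactly the entries marked ``YES'' in \cref{Tcurvelist2}. By \cref{Tmain} these same curves are precisely the ones whose Jacobians have CM, all remaining entries having been shown to lack CM by the negative results of \cref{Snegative}. Consequently, among the exceptional curves, having CM is equivalent to being one of these nine curves, which is in turn equivalent to satisfying Streit's criterion. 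Combining the exceptional and non-exceptional cases yields the stated equivalence.

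The main obstacle is not in the corollary itself, which is a bookkeeping consolidation, but in the prior results on which it rests. The substance lies in \cref{Tmain}: ruling out CM for the curves marked ``NO'' required three genuinely different tools --- the stable-reduction tree criterion of \cref{Ptree}, the Frobenius criterion of M\"{u}ller--Pink (\cref{Pmullerpink}) together with the field-embedding bookkeeping of \cref{Lproductsoffields}, and reduction to the hyperelliptic results of \cite{muller-pink}. The delicate point is that the converse of Streit's criterion can only be asserted once one has, for each curve failing the criterion, an independent proof that it lacks CM; the corollary is precisely the statement that these two \emph{a priori} unrelated computations --- the vanishing of the Streit sum of \cref{Pstreit} and the CM determinations of \cref{Tmain} --- happen to single out the same finite set of curves.
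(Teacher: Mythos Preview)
Your proposal is correct and follows essentially the same approach as the paper's own proof: one direction is \cref{Streit}, and for the converse you handle the cyclic/dihedral case via \cref{PCmD2m} and the exceptional case by noting that the CM entries in \cref{Tcurvelist2} are precisely those in \cref{Pstreitexamples}, hence satisfy Streit's criterion. The paper's proof is more terse but logically identical; your additional discussion of the underlying difficulty is commentary rather than a different argument.
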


\begin{proof}
One direction is immediate from \cref{Streit}.  For the other, observe
first that for each entry in \cref{Tcurvelist2} that is a CM
curve, the fact that the Jacobian has CM is proven using Streit's
criterion in \cref{Pstreitexamples}.  Combining this with
\cref{PCmD2m} finishes the proof.
\end{proof}

\medskip

\noindent \textbf{Acknowledgments:} We would like to thank Andrew Sutherland and Richard Pink for their comments and suggestions when this paper was written. 
 
 
\bibliographystyle{amsplain} 

\bibliography{CM}{}

\end{document}